\documentclass[12pt, reqno]{amsart}

\usepackage[margin=0.73in]{geometry}
\usepackage[
  bookmarks=true]{hyperref}
\usepackage[OT2, T1]{fontenc}
\usepackage[english]{babel}
\usepackage[utf8]{inputenc}
\usepackage{csquotes}
\usepackage[final]{microtype}
\usepackage{lmodern}
\usepackage{amsthm}
\usepackage{amssymb}
\usepackage{mathrsfs}
\usepackage{enumerate}
\usepackage{tikz-cd} 
\usepackage{tikz}
\usepackage{multicol}
\usepackage{multirow}
\usepackage{tikz-qtree}
\usepackage{rotating}
\usepackage{graphicx}
\usepackage{comment}
\usepackage{enumitem}
\usepackage{manfnt}
\usetikzlibrary{arrows,calc,matrix,trees,arrows.meta,positioning,decorations.pathreplacing,bending}

\newtheorem{theorem}{Theorem}[section]

\newtheorem{proposition}[theorem]{Proposition}
\newtheorem{lemma}[theorem]{Lemma}
\newtheorem{corollary}[theorem]{Corollary}

\theoremstyle{definition}

\newtheorem{remark}[theorem]{Remark}
\newtheorem{definition}[theorem]{Definition}
\newtheorem{example}[theorem]{Example}

\newtheorem{condition}[theorem]{Condition}

\newcommand{\Oh}{\mathcal{O}} 
\newcommand{\Sel}{\mathrm{Sel}} 
\newcommand{\Gal}{\mathrm{Gal}} 

\newcommand{\et}{\text{\'et}}
\newcommand{\Res}{\mathrm{Res}}

\newcommand{\genlegendre}[4]{%
  \genfrac{(}{)}{}{#1}{#3}{#4}%
  \if\relax\detokenize{#2}\relax\else_{\!#2}\fi
}

\DeclareSymbolFont{cyrletters}{OT2}{wncyr}{m}{n}
\DeclareMathSymbol{\Sha}{\mathalpha}{cyrletters}{"58}

\begin{document}

\title[Secondary terms for cyclic prime twist Selmer groups]{Secondary terms for first moments of Selmer groups of twists of elliptic curves over global function fields}
\author{Sun Woo Park}
\address{Max Planck Institute for Mathematics, Vivatsgasse 7, 53115 Bonn, Germany}
\email{\href{mailto:s.park@mpim-bonn.mpg.de}{s.park@mpim-bonn.mpg.de}}

\begin{abstract}
Let $E$ be a non-isotrivial elliptic curve over a global function field $\mathbb{F}_q(t)$ of characteristic coprime to $2$ and $3$. Under some explicit conditions, we determine the secondary terms for the first moments of prime Selmer groups of cyclic prime twist families of $E$ over $\mathbb{F}_q(t)$.
\end{abstract}

\maketitle
\tableofcontents

\section{Introduction}

There has been a wealth of research in understanding the moments and distributions of Selmer groups of families of abelian varieties over global fields. Relevant to the conjectural statements behind statistics of Selmer groups is a random matrix model formulated by Poonen and Rains \cite{PR12}, in which they show that these groups are intersections of two maximal isotropic subspaces lying in infinite dimensional vector spaces. There are a wide class of families of abelian varieties over which the heuristic by Poonen and Rains hold. These include the first moments of 2, 3, 4, and 5-Selmer groups of the universal family of elliptic curves (see for example \cite{BS15}), the distribution of $2^\infty$ Selmer groups of quadratic twist families of elliptic curves (see for example \cite{Sm22_01} and \cite{Sm22_02}), and distribution of odd prime Selmer groups of quadratic twist families of abelian varieties over global function fields (see for example \cite{EL23} and \cite{LL25}). 

A refinement of the problem of obtaining the moments and distributions of Selmer groups is the problem of determining rate of converge or secondary terms for the moments and distributions of Selmer groups. For quadratic twist families of abelian varieties, the monumental results by Alex Smith \cite{Sm22_01, Sm22_02}, Jordan Ellenberg and Aaron Landesman \cite{EL23}, and Aaron Landesman and Ishan Levy \cite{LL25} establish upper bounds for rates of convergence of statistics of their Selmer groups. The work by Alex Smith establishes unconditional results on essentially double logarithmic rate of convergence (with respect to the upper bound on absolute value of square-free elements twisting such abelian varieties) for the moments of their $2^\infty$ Selmer groups. The work by Aaron Landesman and Ishan Levy in particular uses stable homology results for Hurwitz modules to establish power saving error term (with respect to the degree of square-free polynomials twisting the given abelian variety) for the moments of their odd Selmer groups, under the assumption that the size of the constant field of the global function field is sufficiently large. For cyclic prime twist families of elliptic curves, a previous result by the author \cite{Pa22} establishes logarithmic saving error term for rates of convergence of distribution of their prime Selmer groups by using geometric ergodicity of irreducible aperiodic Markov operators. 

For universal families of elliptic curves, the recent breakthrough by Arul Shankar and Takashi Taniguchi \cite{ST25} proves the existence of the secondary term (also a power saving term with respect to the naive height) for the first moments of their $2$-Selmer groups. Their work shares its philosophy with a series of works by Takuro Shintani \cite{Shintani72, Shintani75}, in which he used properties of zeta functions associated to prehomogeneous vector spaces \cite{SS74} to compute the first moment of class numbers of irreducible integral binary cubic forms. Shinani's works, however, cannot be directly applied to the problem of computing secondary terms for the first moments of 2-Selmer groups. This is because the work by Shankar and Taniguchi crucially uses correspondences between 2-Selmer groups of elliptic curves and equivalence classes of integral binary quartic forms, the latter of which is not a prehomogeneous vector space. Instead, they use geometry of numbers method as in \cite{BST13}, devise novel approximation techniques of local functions, and perform Fourier analysis over equivalence classes of mod $n$ binary quartic forms (We sincerely thank Takashi Taniguchi for explaining the relation of \cite{ST25} with other previous relevant works in private communication).

The two results \cite{LL25, ST25} seem to suggest that over suitable families of elliptic curves (or abelian varieties) the secondary terms for moments of their Selmer groups are likely to be (or are) power saving terms in comparison to the main term. On the contrary, we construct twist families of elliptic curves where the secondary term for first moments of their Selmer groups are logarithmic - not power - saving term with respect to the height function. For example, let $F_n(\mathbb{F}_q)$ be the set of monic square-free polynomials over $\mathbb{F}_q$ of degree $n$. Given $f \in F_n(\mathbb{F}_q)$, a non-isotrivial elliptic curve $E$ over $\mathbb{F}_q(t)$ satisfying some mild conditions, and a small enough $\epsilon > 0$, we show that there exists an explicitly computable constant $\beta_E$ depending on $E$ such that the first moment of $2$-Selmer groups of quadratic twist families of $E$ satisfies the following equation for sufficiently large $n$ and $q$. 
\begin{equation}
    \sum_{f \in F_n(\mathbb{F}_q)} \# \mathrm{Sel}_2(E_f/\mathbb{F}_q(t)) = 3 \cdot (q^n-q^{n-1}) + \beta_E \cdot \frac{q^n - q^{n-1}}{n^{\frac{3}{8}}} + O_{E,\epsilon} \left( \frac{q^n - q^{n-1}}{n^{\frac{3}{8}} (\log n)^{\frac{1}{2}-\epsilon}} \right).
\end{equation}

\subsection{Setup}

To state the main result of the paper, we set up some notations and definitions. Let $K = \mathbb{F}_q(t)$ be the global function field over the finite field $\mathbb{F}_q$ of characteristic coprime to $2$ and $3$. Fix a prime number $l$ that is coprime to $\text{Char}(K)$. Let $E$ be an elliptic curve over $K$. Throughout this manuscript, we will assume the following four conditions.
\begin{condition} \label{condition}
Assume the following conditions on $K$, $l$, and $E$.
\begin{enumerate}
    \item The primitive $l$-th roots of unity is contained in $K$, i.e. $\mu_l \subset K$.
    \item The elliptic curve $E/K$ is non-isotrivial. 
    \item The elliptic curve $E/K$ admits a place of split multiplicative reduction.
    \item The Galois group $\Gal(K(E[l])/K)$ obtained from adjoining $l$-torsion points of $E/K$ contains the special linear group $\text{SL}_2(\mathbb{F}_l)$.
\end{enumerate}
\end{condition}
We denote by $F_{n}$ an open subscheme of $\mathbb{A}^n$ whose $\mathbb{F}_q$ rational points, denoted as $F_{n}(\mathbb{F}_q)$, parametrize monic square-free polynomials over $\mathbb{F}_q$ of degree $n$ which are coprime to the discriminant of $E$. 

Given a polynomial $f \in F_{n}(\mathbb{F}_q)$, we denote by $A_f$ the $(l-1)$ dimensional abelian variety over $K$ constructed as
\begin{equation}
    A_f / K := \text{Ker} \left(\text{Res}_{K}^{K(\sqrt[l]{f})} E \to E \right),
\end{equation}
where the projection map is induced from the field trace map with respect to the Galois extension $K(\sqrt[l]{f})/K$. We recall that the endomorphism ring of $A_f$ contains the cyclic generator $\sigma_{l,f}$ of $\Gal(K(\sqrt[l]{f})/K)$, and that there exists a $\Gal(\overline{K}/K)$-equivariant isomorphism
\begin{equation*}
    A_f[1-\sigma_{l,f}] \cong E[l],
\end{equation*}
see \cite[Section 4]{MR07} for further details. 

Denote by $\Sel_{1-\sigma_{l,f}} (A_f/K)$ the $1-\sigma_{l,f}$ Selmer group of $A_f/K$ obtained from the global and local long exact sequence of \'etale cohomology groups with respect to the short exact sequence of the following group scheme:
\begin{equation}
    0 \to A_f[1-\sigma_{l,f}] \to A_f \to A_f \to 0.
\end{equation}
We note that the Galois-equivariant isomorphism $A_f[1-\sigma_{l,f}] \cong E[l]$ implies that $\Sel_{1-\sigma_{l,f}} (A_f/K)$ is a finite dimensional $\mathbb{F}_l$ vector space which is contained in $H^1_{\et}(K, E[l])$.

\subsection{Main results}

In the previous work of the author \cite{Pa22}, we established that the distribution of dimensions of $\mathrm{Sel}_{1-\sigma_{l,f}}(A_f/K)$ converges to the Poonen-Rains distribution \cite{PR12} with explicit upper bounds on the rate of convergence as $f$ varies over the set of monic degree $n$ polynomials over $\mathbb{F}_q$. We generalize this result by computing the secondary term for the first moment of sizes of $\mathrm{Sel}_{1-\sigma_{l,f}}(A_f/K)$ as $f$ varies over the set of monic degree $n$ square-free polynomials over $\mathbb{F}_q$, but under the assumption that $n$ and $q$ are sufficiently large.

\begin{theorem}[Simplification of Theorem \ref{thm:main}] \label{introthm:main}
    Fix a prime number $l$. Let $E$ be an elliptic curve satisfying Condition \ref{condition}, and if $l = 2, 3, 5$ for some non-negative integer $m$, then $E$ satisfies an additional (yet explicitly computable) condition (see equation (\ref{eq:main_thm_0})). Then for sufficiently large $n$ and $q$ (see equation (\ref{eq:main_thm_1}) for explicit lower bounds for $n$ and $q$) and any small enough $\epsilon > 0$, there exists an explicit constant $\beta_{E,l}$ depending on $E$ and $l$ (see equation (\ref{eq:main_thm_2}) for its definition) such that
    \begin{equation}
        \sum_{f \in F_n(\mathbb{F}_q)} \# \mathrm{Sel}_{1-\sigma_{l,f}}(A_f/K) = (l+1) \cdot (q^n-q^{n-1}) + \beta_{E,l} \cdot \frac{q^n - q^{n-1}}{n^{\frac{l^2 - l + 1}{l^3}}} + O_{E,l,\epsilon}\left( \frac{q^n - q^{n-1}}{n^{\frac{l^2-l+1}{l^3}} (\log n)^{\frac{1}{2}-\epsilon}} \right),
    \end{equation}
    where the implied constant of the error term depends on $E$, $l$, and $\epsilon$, and is independent of $n$ and $q$.
\end{theorem}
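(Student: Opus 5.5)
We write $\#\Sel_{1-\sigma_{l,f}}(A_f/K)=\sum_{x\in H^1_{\et}(K,E[l])}\mathbf 1[x\in\Sel_{1-\sigma_{l,f}}]$ and swap the order of summation, as in the Markov chain approach of \cite{Pa22}. Concretely, the local conditions at places $v\mid f$ are the images of the local Kummer maps of $A_f$, which are $\mathbb{F}_l$-lines determined by the class of $f$ in $K_v^\times/K_v^{\times l}$. The local conditions at places of bad reduction of $E$ and at $\infty$ depend only on finitely many residue data of $f$. Hence the Selmer group is the kernel of a map from a global space $H^1_{\et}(K,E[l];\text{unramified outside } S\cup\{v\mid f\})$ to a sum of local quotients.

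Fix the number $r$ of prime factors of $f$ and their degrees. The Chebotarev/Frobenius data of $f$ in $K(E[l],\mu_l)$ and the $l$-th power residue symbols among the prime factors give a random walk on $\mathbb{F}_l$-dimensions. Adding one prime factor moves the Selmer rank according to an explicit transition matrix $P$ on $\mathbb{Z}_{\ge 0}$, whose stationary law is Poonen--Rains with mean $\sum l^k\pi(k)=l+1$. The equidistribution of Frobenius elements (Weil bounds, requiring $q$ large relative to $n$-independent constants, which gives the lower bounds on $q$) lets us replace the genuine average over $F_n(\mathbb{F}_q)$ by $(q^n-q^{n-1})\,\mathbb{E}[\,l^{X_r}\,]$, where $X_r$ is the chain after $r$ steps started from the initial rank determined by $E$. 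The error in this replacement is $O(q^{n-1/2}\cdot C^n)$, which is negligible for large $q$.

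The secondary term then comes from the spectral gap. Write $\mathbb{E}[l^{X_r}]=(l+1)+\lambda^r c_E+O(\lambda_2^r)$, where $\lambda$ is the second largest eigenvalue of $P$ acting on the weighted function space containing $k\mapsto l^k$. In \cite{Pa22} one sees that, after accounting for the coupling between steps, the effective contraction is $\lambda=1-\frac{l^2-l+1}{l^3}$ on the relevant eigenvector. Averaging over $r$ with the Erd\H{o}s--Kac/Sathe--Selberg distribution of $\omega(f)$ for $f\in F_n(\mathbb{F}_q)$, which is Poisson with mean $\log n$ up to $O((\log n)^{-1/2+\epsilon})$ relative corrections, gives $\mathbb{E}[\lambda^{\omega(f)}]\approx n^{\lambda-1}=n^{-(l^2-l+1)/l^3}$. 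Multiplying by the Sathe--Selberg constant and $c_E$ defines $\beta_{E,l}$. The $(\log n)^{-1/2+\epsilon}$ error comes precisely from the Sathe--Selberg error term together with the conditioning on local behaviour at the places dividing $\Delta_E$.

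The main obstacle I expect is computing the initial vector and the eigen-decomposition precisely. In particular we must show that the coefficient $c_E$ along the second eigenvector is nonzero and explicitly computable. This coefficient depends on the local conditions at the bad places of $E$, which is where Condition~\ref{condition}(3) and, for $l=2,3,5$, the extra hypothesis (\ref{eq:main_thm_0}) enter. A second difficulty is making sure that the third eigenvalue contribution, $\mathbb{E}[\lambda_2^{\omega}]=O(n^{\lambda_2-1})$, is genuinely smaller than the error term. I would first compute the spectrum of $P$ explicitly via the $q$-binomial structure of Poonen--Rains transitions, and then check $\lambda_2<\lambda$.
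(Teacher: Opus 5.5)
Your outline has the right skeleton: a Markov chain indexed by the prime factors, the contraction $\gamma_l=1-\frac{l^2-l+1}{l^3}$ on $\mathbb{E}[l^X]$, and Sathe--Selberg turning $\gamma_l^{\omega(f)}$ into $n^{\gamma_l-1}$. But the step that carries all the weight is only asserted. That step replaces the average over $F_n(\mathbb{F}_q)$ by $\mathbb{E}[l^{X_r}]$ for every $f$, and it does not follow from equidistribution of Frobenius in $K(E[l],\mu_l)$.

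The operator $M$ describes $\mathrm{rk}(\chi)$ for \emph{uniformly random} local data $\chi\in\Omega_f$, built up one place at a time through the sets $\Omega_{\chi,\mathfrak v}$ (Proposition~\ref{prop:cheb3}). For an actual $f$, the characters $\chi_{f,v}$, $v\in\Sigma_E(f)$, are functions of the $l$-th power residue symbols among the factors of $f$ and of $f$ modulo the bad places. They need not be uniform in $\Omega_f$:
\begin{itemize}
\item for $l=2$, reciprocity in $\mathbb{F}_q[t]$ determines $\prod_{g\mid f}\bigl(\frac{f/g}{g}\bigr)$ from the degrees alone;
\item for $l\le 3$, the Kummer data can interact with $K(E[l])$, cf.\ Proposition~\ref{prop:cheb1}(2);
\item for small-degree factors, they are not random at fixed $q$.
\end{itemize}
Moreover, multiplying $f$ by a new prime $g$ also alters $\chi_f$ at the old places of $\Sigma_E(f)$ (by $\chi_{g,v}$), whereas $\Omega_{\chi,\mathfrak v}$ keeps those components fixed. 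So the chain cannot be run along the factorization of a fixed $f$.

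The ingredient you are missing is the paper's auxiliary factor. Fix $f/g_a$ and let $g_a$ run over primes of fixed type and fixed degree $n_0>\mathfrak n$. Chebotarev in the Kummer extension cut out by the other factors and the bad places (Proposition~\ref{prop:cheb1}) makes the local data at all other places uniform in $\Omega_{f/g_a}$. The uncontrolled character at $g_a$ is harmless exactly when $g_a\in\mathcal P_0$ (Proposition~\ref{prop:Selmer_vary}). This is why the initial law is $\delta_1$ and the exponent is $\gamma_l^{k-1}$, and it forces the split into $F_n^{main,0}$, $F_n^{main,1}$ and $F_n^{[1]}$.

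The leftover pieces cannot simply go into the error term. For $l=2$, the $f$ with no factor in $\mathcal P_0$ have density $\asymp n^{-1/3}$, which exceeds the secondary term $n^{-3/8}$. One therefore needs the separate chain $M_L M_1^{\omega(f^*)-1}$ with auxiliary place in $\mathcal P_1$ (Propositions~\ref{prop:reinterpret-m1} and \ref{prop:convergence-m1}). Only the all-$\mathcal P_2$ set, contributing $\ll n^{1/(l^3-l)-1}$, can be handled by a crude bound (Propositions~\ref{prop:reinterpret-[1]} and \ref{prop:convergence-[1]}).

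Your error bookkeeping also does not fit the theorem. A relative error $C^n q^{-1/2}$ is negligible only if $q\gg C^{2n}$. That contradicts your own claim that $q$ need only exceed $n$-independent constants, and it is far stronger than (\ref{eq:main_thm_1}), where $q$ need only exceed roughly a small power of $n$. The paper avoids exponential accumulation in three ways:
\begin{enumerate}
\item[(i)] It discards $\omega(f)>R_1$, $\omega(f)\le R_2$ and $\omega(f_*)>R_2$ using $\dim\mathrm{Sel}_{1-\sigma_{l,f}}(A_f/K)\le D_{E,l}+2\omega(f)$ together with Theorems~\ref{thm:LDP} and \ref{thm:exceptional_set} (Proposition~\ref{prop:Fn23-contribution}).
\item[(ii)] It treats factors of degree $>\mathfrak n=\lfloor 4(\log n)^2\rfloor$ with Chebotarev errors $\ll n^{-2\log n+O(1)}$.
\item[(iii)] It pays $q^{-1/2}$ only for the at most $R_2$ small factors, where $l^{3\#\Sigma_E(h)}\ll_{E,l} n^{3\log l/\log\log\log n}$. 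This is the source of the lower bound on $q$.
\end{enumerate}
On the other hand, the spectral step you worry about is unnecessary. One has exactly $\mathbb{E}[l^{M_L\delta}]=\frac1l\,\mathbb{E}[l^{\delta}]+(l-\frac1l)$ (equation~(\ref{eqn:important3}) with $d=1$). Hence $j\mapsto l^j-(l+1)$ is an exact eigenfunction of the adjoint of $M$ with eigenvalue $\gamma_l$. This gives (\ref{eqn:important2}) with no $\lambda_2^r$ remainder, and there is no need to compute the spectrum of $M$.
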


\subsection{Strategy of the Proof}

Similar to the previous work of the author and a previous work by Klagsbrun, Mazur, and Rubin \cite{KMR14, Pa22}, we construct a governing Markov operator $M$ (see Definition \ref{defn:Markov}) over a countable state space $\mathbb{Z}_{\geq 0}$ that models the variation of dimensions of prime Selmer groups of cyclic twist families of elliptic curves over global function fields. The method we use to obtain secondary terms for the first moments of Selmer groups relies on using an explicit linear recursive formula that relates the first exponential moment of a probability distribution $\delta: \mathbb{Z}_{\geq 0} \to [0,1]$ with that of the distribution $M\delta$ obtained after applying the Markov operator $M$ to the distribution $\delta$. In particular, the following equation holds (c.f. equation (\ref{eqn:important2})):
\begin{equation*}
    \sum_{z=0}^\infty l^{z} \cdot \mathbb{P}[M\delta = z] = \left( \sum_{z=0}^\infty l^z \cdot \mathbb{P}[\delta = z] \right) \cdot \left(1 - \frac{l^2 - l + 1}{l^3} \right) + \left(1 + \frac{1}{l^3} \right).
\end{equation*}
Based on insights from \cite{KMR14, Pa22}, the number of iterations of $M$ we apply to some probability distribution $\delta$ to model the distribution of prime Selmer groups is given by the number of distinct irreducible factors of a square-free polynomial $f$ using which we cyclically twist the elliptic curve $E$. This number is asymptotically equal to $\log n$ as $n$ grows arbitrarily large (see for example Theorem \ref{thm:SatheSelberg}). Hence, one expects that the secondary term should be of order a fractional power of reciprocal of $n$ (which is roughly the size of $\log \# F_n(\mathbb{F}_q)$), and its fractional exponent should be closely related to the slope $\left(1 - \frac{l^2 - l + 1}{l^3} \right)$ of the linear recurrence relation.

To ensure that the secondary term can be obtained in align with our intuition, we construct a density $1$ subset of monic square-free polynomials of degree $n$, denoted as $F_n^{main,0}(\mathbb{F}_q)$ (see Definition \ref{def:key-subsets}), such that satisfies the following two conditions, under the assumption that $n$ and $q$ can be chosen to be sufficiently large:
\begin{itemize}
    \item The secondary term for the first moments of Selmer groups over the subset $F_n^{main,0}(\mathbb{F}_q)$ is determined by the slope of the linear recurrence relation. (Propositions \ref{prop:reinterpret-m0} and \ref{prop:convergence-m0})
    \item The rate of convergence of the first moments of Selmer groups over the complement of $F_n^{main,0}(\mathbb{F}_q)$ is of strictly smaller order than the secondary term obtained over the set $F_n^{main,0}(\mathbb{F}_q)$. (Propositions \ref{prop:Fn23-contribution}, \ref{prop:reinterpret-m1}, \ref{prop:reinterpret-[1]}, \ref{prop:convergence-m1}, and \ref{prop:convergence-[1]})
\end{itemize}
We construct this subset $F_n^{main,0}(\mathbb{F}_q)$ using these three criteria:
\begin{itemize}
    \item The number of distinct irreducible factors of $f \in F_n^{main,0}(\mathbb{F}_q)$ should not be too small (i.e. greater than $\frac{\log n}{\log \log \log n}$) or too large (i.e. smaller than a constant multiple of $\log n$).
    \item The number of distinct irreducible factors of $f \in F_n^{main,0}(\mathbb{F}_q)$ of small degree (say at most $4(\log n)^2$) should not be too large (i.e. smaller than  $\frac{\log n}{\log \log \log n}$).
    \item There is an irreducible factor of $f$ of large degree (say greater than $4(\log n)^2$) such that $E[l]$ has no non-trivial rational points over its residue field.
\end{itemize}
We note that a careful choice of $n$ and $q$ is required to extract the desired secondary term. For this we crucially use uniform versions of Sathe-Selberg theorem (Theorems \ref{thm:SatheSelberg}), large deviation estimates (Theorem \ref{thm:LDP}), and effective Chebotarev density theorem (Theorem \ref{thm:chebotarev}).

\section*{Acknowledgements}
Parts of the previous version of this manuscript appeared in Chapter 3 of the author's PhD thesis, but the content as well as the exposition of the manuscript has been significantly updated and corrected from what was written there. The majority of the manuscript was written during the author's stay at Max Planck Institute for Mathematics as a postdoctoral researcher, as well as his visit to AIM workshop on ``Nilpotent counting problems in arithmetic statistics''. 

We thank Jordan Ellenberg for giving patient guidence as well as insightful and enlightening comments, without which would not have led to these fruitful results. We thank Edgar Assing, Andrea Bianchi, Tim Browning, Valentin Blomer, Jordan Ellenberg, Anh Trong Nam Hoang, Daniel Keliher, Peter Koymans, Aaron Landesman, Ishan Levy, Wanlin Li, Will Sawin, Arul Shankar, Alex Smith, Takashi Taniguchi, Jiuya Wang, and Melanie Matchett-Wood for very helpful comments, and kindly and generously pointing out several inconsistencies and errors that were made in the process of understanding relations between Selmer spaces and Hurwitz spaces. We would like to sincerely thank the organizers of the AIM workshop on ``Nilpotent counting problems in arithmetic statistics'' - Brandon Alberts, Yuan Liu, and Melanie Matchett-Wood. Lastly, we would like to sincerely thank University of Wisconsin-Madison, Max Planck Institute for Mathematics, and American Institute of Mathematics for providing a wonderful environment to delve into research.

\section{Preliminary}

\subsection{Cyclic prime twists of elliptic curves}
\label{section:prime_twists}

This section is based on Mazur and Rubin's treatment of the cyclic prime twists of abelian varieties with respect to cyclic prime field extensions \cite[Chapter 3]{MR07}.

Let $l$ be a prime. Suppose $K$ includes all the primitive $l$-th roots of unity $\mu_l$. Given an element $f \in \Oh_K$, we denote by $\Res_K^{K(\sqrt[l]{f})} E$ the Weil restriction of scalars of the elliptic curve $E_K$ associated to the cyclic order-$l$ Galois extension $K(\sqrt[l]{f})/K$. As a group scheme over $K(\sqrt[l]{f})$, the Weil restriction of scalars of $E$ can be written as a product of $E$ with indices given by elements of the Galois group $\Gal(K(\sqrt[l]{f})/K)$:
\begin{equation}
    \Res_K^{K(\sqrt[l]{f})} E \cong \prod_{\tau \in \Gal(K(\sqrt[l]{f})/K)} E \; \; \; \; (\text{over } K(\sqrt[l]{f})).
\end{equation}
The Galois group $\Gal(K(\sqrt[l]{f}) / K)$ acts on the group scheme by cyclically permuting the summands indexed by the elements $\tau$ of the Galois group. This implies that the Weil restriction of scalars of $E$ has a canonical norm map to the elliptic curve $E$ defined over $K$:
\begin{equation}
    \mathbb{N}: \Res_K^{K(\sqrt[l]{f})} E \to E
\end{equation}
Using the norm map, we may decompose the semisimple group ring $\mathbb{Q}[\Gal(K(\sqrt[l]{f})/K)]$ as
\begin{equation}
    \mathbb{Q}[\Gal(K(\sqrt[l]{f})/K)] \cong \mathbb{Q} \oplus \mathbb{Q} \left[ \left( \Gal(K(\sqrt[l]{f})/K) \right)^\times \right]
\end{equation}
Denote by $\mathcal{I}_{f,l}$ the set
\begin{equation}
    \mathcal{I}_{f,l} = \mathbb{Q} \left[ \left( \Gal(K(\sqrt[l]{f})/K) \right)^\times \right] \cap \mathbb{Z}[\Gal(K(\sqrt[l]{f})/K)]
\end{equation}
so that $\mathcal{I}_{f,l}$ is an ideal of $\mathbb{Z}[\Gal(K(\sqrt[l]{f})/K)]$ as well as a $\Gal(\overline{K}/K)$-module.

\begin{definition} \label{definition:cyclic_prime_twist}
    Suppose $K$ contains all the primitive $l$-th roots of unity $\mu_l$. Let $f \in \Oh_K^\times / (\Oh_K^\times)^l$ be a $l$-th power free integral element over $K$. The cyclic $l$-twist of an elliptic curve $E$ associated to a cyclic extension $K(\sqrt[l]{f})/K$ is the following $l-1$ dimensional abelian variety over $K$:
    \begin{equation}
        \mathcal{I}_{f,l} \otimes E \cong \text{Ker} \left( \mathbb{N}: \Res_K^{K(\sqrt[l]{f})} E \to E \right).
    \end{equation}
\end{definition}
Throughout this manuscript, we may use the abbreviation $A_f$ to denote the $l-1$ dimensional abelian variety $\mathcal{I}_{f,l} \otimes E$ over $K$.

\begin{example}
Note that if $l = 2$, then the 1-dimensional abelian variety $\mathcal{I}_{f,2} \otimes E$ associated to a squarefree element $f \in \Oh_K^\times / (\Oh_K^\times)^2$ is isomorphic to the quadratic twist $E_f$ of $E$ over $K$. Suppose one has the Weierstrass model for the elliptic curve $E: y^2 = x^3 + Ax + B$. The Weil restriction of scalars with respect to the field extension $K(\sqrt{f})/K$ corresponds to the 2-dimensional abelian variety inside $\mathbb{A}^4 := \text{Spec}\left(K[x_0,x_1,y_0,y_1] \right)$ defined by the equation
\begin{equation}
    (y_0 + y_1 \sqrt{f})^2 = (x_0 + x_1 \sqrt{f})^3 + A (x_0 + x_1 \sqrt{f}) + B.
\end{equation}
Note that the $K$-rational coefficient terms for $\sqrt{f}$ and the $K$-rational terms cut out the 2-dimensional abelian variety inside $\mathbb{A}^4$:
\begin{align}
    \begin{split}
        y_0^2 + y_1^2 f &= x_0^3 + 3f x_0 x_1^2 + A x_0 + B \\
        2 y_0 y_1 &= 3 x_0^2 x_1 + f x_1^3 + A x_1
    \end{split}
\end{align}
If $y_0 = 0$, then one recovers the quadratic twist of the elliptic curve (assuming $x_1 = 0$)
\begin{equation}
    y_1^2 f = x_0^3 + Ax_0 + B.
\end{equation}
On the other hand, if $y_1 = 0$, then one recovers the elliptic curve (assuming $x_1 = 0$)
\begin{equation}
    y_0^2 = x_0^3 + Ax_0 + B.
\end{equation}
Because the action of $\Gal(K(\sqrt{f})/K)$ on $\Res_{K}^{K(\sqrt{f})} E$ is given by cyclic permutations of summands, it follows that as group schemes over $K(\sqrt{f})$,
\begin{equation}
    E \oplus E_f \cong \Res_{K}^{K(\sqrt{f})} E \cong E \oplus (\mathcal{I}_{f,2} \otimes E)
\end{equation}
which implies the desired isomorphism over $K$
\begin{equation}
    E_f \cong (\mathcal{I}_{f,2} \otimes E)
\end{equation}
because both group schemes are fixed by the action of $\Gal(K(\sqrt{f})/K)$. Throughout this manuscript, we will primarily use the notation $E_f$ to denote the quadratic twist of the elliptic curve $E$.
\end{example}

\begin{remark}
For any prime $l$, the short exact sequence of group schemes
\begin{equation}
    0 \to \mathcal{I}_{f,l} \otimes E \to \Res_{K}^{K(\sqrt[l]{f})} E \to E \to 0
\end{equation}
implies that
\begin{equation}
    \text{rank}_\mathbb{Z} (\mathcal{I}_{f,l} \otimes E)(K) = \text{rank}_{\mathbb{Z}} E(K(\sqrt[l]{f})) - \text{rank}_{\mathbb{Z}} E(K)
\end{equation}
\end{remark}

\begin{remark}
Recall that the Galois group $\Gal(K(\sqrt[l]{f}) / K)$ acts on $\Res_{K}^{K(\sqrt[l]{f})} E$ by cyclically permuting the summands indexed by the elements of the Galois group. This implies that the endomorphism ring of $\mathcal{I}_{f,l} \otimes E$ contains the group ring
\begin{equation}
    \text{End}(\mathcal{I}_{f,l} \otimes E) \supset \frac{\mathbb{Z} \left[ \left(\Gal(K(\sqrt[l]{f})/K)\right)^\times \right]}{(1 + \sigma_{l,f} + \cdots + \sigma_{l,f}^{l-1})}.
\end{equation}
where $\sigma_{l,f}$ is the generator of the Galois group $\Gal(K(\sqrt[l]{f})/K)$. With abuse of notation, we denote by $\sigma_{l,f}$ a fixed choice of a generator of the multiplicative group $\left(\Gal(K(\sqrt[l]{f})/K)\right)^\times$. We warn the readers that the generator $\sigma_{l,f}$ is not identical to the generator of the Galois group $\Gal(K(\mu_l)/K)$ for the case when $\mu_l \not\subset K$.
\end{remark}

\subsection{Selmer groups}

One of the well-studied strategies to bound the rank of an abelian variety $A$ is to construct its Selmer group associated to a choice of an element $m \in \text{End}(A)$. We recall the construction of Selmer groups from the following definition.
\begin{definition} \label{definition:m_Selmer}
Let $A_K$ be an abelian variety defined over a global field $K$. Suppose that $m \in \text{End}(A_K)$ has degree coprime to the characteristic of $K$. The $m$-Selmer group of the abelian variety $A_K$ is defined as a finite subspace of the first \'etale cohomology groups
\begin{equation} \label{defn:selmer}
    \Sel_m(A_K) := \text{Ker} \left( H^1_{\et}(K, A_K[m]) \to \prod_v H^1_{\et}(K_v, A_K)[m] \right),
\end{equation}
where the product over local cohomology groups spans over all finite places of $\Oh_K$. We note that the $m$-Selmer groups are constructed from comparing the long exact sequence of global and local \'etale cohomology groups with respect to the following short exact sequence of group schemes:
\begin{equation}
    0 \to A_K[m] \to A_K \to^m A_K \to 0.
\end{equation}
\end{definition}
We achieve the following short exact sequence
\begin{equation}
    0 \to A_K(K) / mA_K(K) \to \Sel_m(A_K) \to \Sha(A_K)[m] \to 0
\end{equation}
where $\Sha(A_K)[m]$ is the $m$-torsion subgroup of the Tate Shafarevich group $\Sha(A_K)$. Assuming that $A_K[m](K)$ is a finite $R$-module for some commutative finite ring $R$, we can obtain the upper bound of the rank of $A_K$ as
\begin{equation}
    \text{rank}_\mathbb{Z} A_K(K) + \text{rank}_R A_K[m](K) \leq \text{rank}_R \Sel_m(A_K)
\end{equation}
where the notation $\text{rank}_R M$ for a finitely generated $R$-module $M$ denotes
\begin{equation}
    \text{rank}_R M := \max_{\mathfrak{p} \subset R \text{ prime }} \left[ \dim_{R/\mathfrak{p}} (M \otimes_R R/\mathfrak{p}) \right].
\end{equation}

Using the structure of the endomorphism ring of cyclic twists of elliptic curves, we can now define the following Selmer groups of cyclic prime twists of elliptic curves.

\begin{definition} \label{definition:prime_Selmer_Groups}
    Let $E$ be any elliptic curve over $K$ which contains a primitive $l$-th root of unity $\mu_l$. Fix an $l$-th power free element $f \in \Oh_K^\times / (\Oh_K^\times)^l$. Using Definition \ref{definition:m_Selmer}, we can define the following two types of prime Selmer groups of abelian varieties $\mathcal{I}_{f,l} \otimes E$.
    \begin{enumerate}
    \item The $p$-Selmer group of $\mathcal{I}_{f,l} \otimes E$ is a finite dimensional $\mathbb{F}_p$-vector space
    \begin{equation}
        \Sel_p(\mathcal{I}_{f,l} \otimes E) \subset H^1_{\et}(K, (\mathcal{I}_{f,l} \otimes E)[p]).
    \end{equation}
    \item Recall that $\sigma_{l,f}$ is a generator of the multiplicative group $\Gal(K(\sqrt[l]{f})/K)^\times$. The $1-\sigma_{l,f}$ Selmer group of $\mathcal{I}_{f,l} \otimes E$ is a finite dimensional $\mathbb{F}_l$-vector space
    \begin{equation}
        \Sel_{1-\sigma_{l,f}}(\mathcal{I}_{f,l} \otimes E) \subset H^1_{\et}(K,(\mathcal{I}_{f,l} \otimes E)[1 - \sigma_{l,f}]) \cong H^1_{\et}(K,E[l])
    \end{equation}
    where the last isomorphism follows from the canonical $\Gal(\overline{K}/K)$-module isomorphism 
    \begin{equation}
        (\mathcal{I}_{f,l} \otimes E)[1 - \sigma_{l,f}] \cong E[l].
    \end{equation}
    (See \cite{MR07}[Proposition 4.1] for a complete proof of this fact).
    \end{enumerate}
\end{definition}

\subsection{Subsets of squarefree polynomials}

We borrow some (but not all) of the notations from \cite{Pa22} which allow us to subdivide the set of squarefree polynomials $F_n(\mathbb{F}_q)$. 
\begin{definition}
    The following notations are adapted from \cite[Definition 4.2, 4.6, 4.10]{Pa22}, which adhere to the notations as defined in \cite[Section 3,4,5]{KMR14}. Throughout these notations, we denote by $E$ a non-isotrivial elliptic curve over $K = \mathbb{F}_q(t)$. We also define $\mathfrak{n} := \lfloor 4(\log n)^2 \rfloor$.
    \begin{itemize}
    \item $\Sigma_E$: A set of places of $K$ consisting only of places of bad reduction of $E$.
    \item $\Sigma_E(\sigma)$: A set of places of $K$ that includes a set of places in $\Sigma_E$ and a set of places dividing a square-free element $\sigma \in K$.
    \item For $0 \leq i \leq 2$, define the set
    \begin{equation*}
        \mathcal{P}_i := \{v \text{ place of } K \; | \; v \not\in \Sigma_E \text{ and } \dim_{\mathbb{F}_p} E(K_v)[l] = i \}
    \end{equation*}
    Denote by $\mathcal{P}$ the set $\cup_i \mathcal{P}_i$.
    \item Given a positive integer $d \in \mathbb{N}$, the set $\mathcal{P}_i(d)$ and $\mathcal{P}(d)$ is the set of irreducible polynomials in respective sets of degree equal to $d$.
    \item $f$: A square-free polynomial of degree $n$ over $\mathbb{F}_q$ that is coprime to the any places in $\Sigma_E$.
    \item $f_*$: A product of irreducible factors of $f$ whose degrees are at most $\mathfrak{n}$, i.e.
    \begin{equation*}
    f_* = \prod_{\substack{g \mid f \\ g \in \cup_{i=1}^{\mathfrak{n}} \mathcal{P}(i)}} g.
    \end{equation*}
    \item $f^*$: A product of irreducible factors of $f$ whose degrees are at least $\mathfrak{n}+1$, i.e. 
    \begin{equation*}
    f^* = \prod_{\substack{g \mid f \\ g \in \cup_{i=\mathfrak{n}+1}^{n} \mathcal{P}(i)}} g.
    \end{equation*}
    \item $\omega(f)$: The number of irreducible factors of a square-free polynomial $f$ over $\mathbb{F}_q$.
    \end{itemize}
\end{definition}

We will subdivide $F_n(\mathbb{F}_q)$ based on considering these three criteria.
\begin{itemize}
    \item What is the cardinality of $\omega(f)$?
    \item What is the cardinality of $\omega(f_*)$?
    \item How many irreducible factors of $f$ lie in $\mathcal{P}_0$?
\end{itemize}
\begin{definition} \label{def:key-subsets}
    Let $R_1, R_2$ be positive integers. We define four subsets of $F_n(\mathbb{F}_q)$ as follows.
    \begin{itemize}
        \item $F_{n}^{main}(\mathbb{F}_q) := \Big\{ f \in F_n(\mathbb{F}_q) : R_2 < \omega(f) \leq R_1, \omega(f_*) \leq R_2, f \text{ has an irred. factor in } \mathcal{P}_0 \cup \mathcal{P}_1 \Big\},$
        \item $F_n^{[1]}(\mathbb{F}_q) := \Big\{ f \in F_n(\mathbb{F}_q) : R_2 < \omega(f) \leq R_1, \omega(f_*) \leq R_2, \text{ all irred. factors of } f \text{ are in } \mathcal{P}_2 \Big\},$
        \item $F_n^{[2]}(\mathbb{F}_q) := \Big\{ f \in F_n(\mathbb{F}_q) : R_2 < \omega(f) \leq R_1, \omega(f_*) > R_2 \Big\}$,
        \item $F_n^{[3]}(\mathbb{F}_q) := \Big\{ f \in F_n(\mathbb{F}_q) : \omega(f) > R_1 \text{ or } \omega(f) \leq R_2 \}$.
    \end{itemize}
    By construction we have $F_n(\mathbb{F}_q) = F_n^{main}(\mathbb{F}_q) \bigsqcup F_n^{[1]}(\mathbb{F}_q) \bigsqcup F_n^{[2]}(\mathbb{F}_q) \bigsqcup F_n^{[3]}(\mathbb{F}_q)$.
\end{definition}
As the name suggests, we shall choose $R_1$ and $R_2$ (which will depend on $n$) such that the distribution of twist families of Selmer groups over the set $F_n^{main}(\mathbb{F}_q)$ determines the distribution of those over $F_n(\mathbb{F}_q)$, whereas the distribution over $F_n^{[1]}(\mathbb{F}_q)$, $F_n^{[2]}(\mathbb{F}_q)$, and $F_n^{[3]}(\mathbb{F}_q)$ are regarded as error terms. To determine the lower bound of the rate of convergence of the distribution, we will also need to demonstrate that the contributions from the subsets $F_n^{[1]}(\mathbb{F}_q)$, $F_n^{[2]}(\mathbb{F}_q)$, and $F_n^{[3]}(\mathbb{F}_q)$ are less than the rate of convergence of the distribution over $F_n^{main}(\mathbb{F}_q)$.

To compute the rate of convergence of the distribution over the subset $F_n^{main}(\mathbb{F}_q)$, we will use the function field analogue of Sathe-Selberg theorem.
\begin{theorem}[Theorem 1 of \cite{AP19}] \label{thm:SatheSelberg}
    Let $A > 1$ be a fixed constant. Then uniformly for all $n \geq 2$ and $1 \leq k \leq A \log n$,
    \begin{equation}
        \# \{f \in F_n(\mathbb{F}_q) : \omega(f) = k\} = \frac{q^n}{n} \frac{(\log n)^{k-1}}{(k-1)!} \cdot \Big( G\Big( \frac{k-1}{\log n} \Big) + O_A(k/(\log n)^2) \Big),
    \end{equation}
    where the function $G$ is defined as
    \begin{equation*}
        G(z) := \frac{1}{\Gamma(1 + z)} \cdot \prod_{g \in \mathcal{P}} \Big(1 + \frac{z}{q^{\deg g}} \Big) \Big( 1 - \frac{1}{q^{\deg g}} \Big)^z. 
    \end{equation*}
\end{theorem}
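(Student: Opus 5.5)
This is the function field Sathe--Selberg theorem of \cite{AP19}. I would prove it by the Selberg--Delange method, adapted to $\mathbb{F}_q[t]$. Here the zeta function $Z(u)=(1-qu)^{-1}$ is explicit, so the analysis is considerably cleaner than over $\mathbb{Q}$.

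\textbf{Generating function.} The starting point is the bivariate generating function
\begin{equation*}
\mathcal{F}(z,u) := \sum_{n\ge 0}\sum_{f \in F_n(\mathbb{F}_q)} z^{\omega(f)}u^{n} = \prod_{g \in \mathcal{P}} \bigl(1+z u^{\deg g}\bigr).
\end{equation*}
The Euler product above runs over all monic irreducibles; discarding the finitely many places in $\Sigma_E$ only changes a harmless finite factor. I would write $\mathcal{F}(z,u)=(1-qu)^{-z}H(z,u)$ with
\begin{equation*}
H(z,u)=\prod_{g}\bigl(1+zu^{\deg g}\bigr)\bigl(1-u^{\deg g}\bigr)^{z}.
\end{equation*}
Each local factor equals $1+O_A(|z|^2|u|^{2\deg g})$. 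Combined with the bound $\#\mathcal{P}(d)\le q^d/d$, this shows $H$ is holomorphic and uniformly bounded, together with its $z$-derivatives, for $|z|\le A'$ (any $A'>A$) and $|u|\le q^{-1/2-\delta}$, with bounds independent of $q$. Note that $H(z,1/q)=\Gamma(1+z)G(z)$.

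\textbf{Coefficient of $u^n$.} Extracting the coefficient of $u^n$ by Cauchy's formula, I would deform the contour to $|u|=q^{-1/2-\delta}$ plus a Hankel loop around the branch point $u=1/q$. Expanding $H(z,u)=H(z,1/q)+O(|1-qu|)$ near $u=1/q$ and using $[u^n](1-qu)^{-z}=q^n\binom{n+z-1}{n}=q^n n^{z-1}/\Gamma(z)\,(1+O_A(1/n))$ gives
\begin{equation*}
\sum_{f\in F_n(\mathbb{F}_q)} z^{\omega(f)} = q^n\, n^{z-1}\, z\, G(z)\bigl(1+O_A(1/n)\bigr),
\end{equation*}
uniformly for $|z|\le A'$ and all $q$. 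The outer circle contributes only $O(q^{n/2+\delta n})$, which is absorbed into the error.

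\textbf{Coefficient of $z^k$ (saddle point).} Next I extract the coefficient of $z^k$ by integrating over the circle $|z|=r$ with $r=(k-1)/\log n$; this is where the uniformity in $k\le A\log n$ enters. Writing the integrand as $z^{-k}\cdot z\,e^{z\log n}\cdot G(z)/n$, I would Taylor expand $G(z)=G(r)+G'(r)(z-r)+O((z-r)^2)$.
\begin{itemize}
\item The constant term gives $\frac{q^n}{n}\frac{(\log n)^{k-1}}{(k-1)!}G(r)$, since $[z^{k-1}]e^{z\log n}=(\log n)^{k-1}/(k-1)!$.
\item The linear term vanishes exactly, because $r$ is the mean of the Poisson-type weight, i.e.\ $[z^{k-1}]\bigl((z-r)e^{z\log n}\bigr)=0$ for this choice of $r$.
\item The quadratic remainder contributes $O(k/(\log n)^2)$ relative to the main term, by the standard Selberg estimate $[z^{k-1}]\bigl((z-r)^2 e^{z\log n}\bigr)\ll \frac{(\log n)^{k-1}}{(k-1)!}\cdot\frac{k}{(\log n)^2}$.
\end{itemize}
For $k=1$ one has $r=0$, and the computation is done directly with a small fixed circle.

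\textbf{Main obstacle.} The delicate point is the uniformity: the bounds on $H$ and $G''$ must be uniform in $q$, and the Cauchy-integral estimate must stay uniform across the whole range $1\le k\le A\log n$, including small $r$. For $q$-uniformity I would rely on $\#\mathcal{P}(d)\le q^d/d$ and on choosing the radius $q^{-1/2-\delta}$ relative to $q$. For small $r$ I would bound $|G''|$ on a fixed disc $|z|\le A'$ and apply Cauchy estimates, so that no lower bound on $r$ is needed.
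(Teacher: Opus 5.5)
Your proposal is correct and is essentially the function-field Selberg method by which \cite{AP19} proves this theorem; the paper itself gives no proof and simply imports the result. Three small repairs are needed: \emph{(i)} state the $u^n$-coefficient additively as $q^n n^{z-1}\bigl(zG(z)+O_A(1/n)\bigr)$, since the multiplicative form fails at zeros of $zG(z)$, e.g.\ at $z=-2$ when $A'>2$, where $\sum_{f}(-2)^{\omega(f)}$ is generally nonzero; the additive bound is all your $z$-integral uses. \emph{(ii)} Bound the quadratic Taylor remainder through the absolute integral $\int_{-\pi}^{\pi}e^{(k-1)\cos\theta}|e^{i\theta}-1|^2\,d\theta\ll e^{k-1}k^{-3/2}$, not through the signed coefficient $[z^{k-1}]\bigl((z-r)^2e^{z\log n}\bigr)$. \emph{(iii)} Restricting to $f$ coprime to $\Delta_E$ is not cosmetic: your argument then yields the constant $G(z)\prod_{g\in\Sigma_E}(1-q^{-\deg g})^{z}$ (with $G$ as written, whose product already omits $\Sigma_E$) and $E$-dependent implied constants, so what you prove cleanly and uniformly in $q$ is the \cite{AP19} statement over all squarefree polynomials.
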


To help us with understanding the contributions from the subset $F_n^{[3]}(\mathbb{F}_q)$, we recall the following upper bound on the number of squarefree polynomials of degree $n$ having exactly $k$ factors.
\begin{theorem}[Theorem 5.4 of \cite{GKMZ20}] \label{thm:LDP}
    We have the uniform upper bound for all $k$ and $n$:
    \begin{equation}
        \# \{f \in F_n(\mathbb{F}_q) : \omega(f) = k\} \leq \frac{q^n}{n} \frac{(\log n + 2 - \log 2)^{k-1}}{(k-1)!}.
    \end{equation}
\end{theorem}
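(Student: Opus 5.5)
The plan is to reduce to counting $k$-tuples of monic irreducible polynomials and then bound a harmonic-type convolution sum by a symmetrisation trick. No arithmetic of $E$ is needed: I first drop the conditions that $f$ be coprime to the discriminant of $E$, since dropping them only enlarges the set. A square-free monic $f$ of degree $n$ with $\omega(f)=k$ is a product of $k$ distinct monic irreducibles $g_1,\dots,g_k$ with $\sum \deg g_i = n$. Every such $f$ arises from exactly $k!$ ordered tuples, because the factors are distinct. Writing $\pi_q(d)$ for the number of monic irreducible polynomials of degree $d$, and using the standard bound $\pi_q(d)\le q^d/d$ (from $\sum_{e\mid d} e\,\pi_q(e)=q^d$), I get
\begin{equation*}
\#\{f\in F_n(\mathbb{F}_q):\omega(f)=k\}\le \frac{1}{k!}\sum_{\substack{d_1+\cdots+d_k=n\\ d_i\ge 1}}\prod_{i=1}^k \frac{q^{d_i}}{d_i} = \frac{q^n}{k!}\, S_k(n),
\end{equation*}
where $S_k(n):=\sum_{d_1+\cdots+d_k=n}\prod_i d_i^{-1}$.

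The key step is to bound $S_k(n)$. I use the identity $1=\frac{1}{n}\sum_{j=1}^k d_j$, valid on the range of summation. It gives
\begin{equation*}
\prod_{i=1}^k \frac{1}{d_i} = \frac{1}{n}\sum_{j=1}^k \prod_{i\neq j}\frac{1}{d_i}.
\end{equation*}
Summing over all compositions and using the symmetry in $j$, I obtain $S_k(n)=\frac{k}{n}\sum \prod_{i=1}^{k-1} d_i^{-1}$. Here the remaining sum runs over $(d_1,\dots,d_{k-1})$ with $d_i\ge1$ and $d_1+\cdots+d_{k-1}\le n-1$, since $d_k=n-\sum_{i<k}d_i\ge 1$ is then determined. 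I then drop the constraint on the sum and keep only $d_i\le n-1$ for each $i$, which gives $S_k(n)\le \frac{k}{n}H_{n-1}^{k-1}$ with $H_{n-1}=\sum_{d=1}^{n-1}1/d\le \log n+1$.

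Combining the two displays gives
\begin{equation*}
\#\{f:\omega(f)=k\}\le \frac{q^n}{k!}\cdot\frac{k}{n}(\log n+1)^{k-1}=\frac{q^n}{n}\frac{(\log n+1)^{k-1}}{(k-1)!}.
\end{equation*}
Since $1\le 2-\log 2$, this is at most the stated bound, uniformly in $k$, $n$ and $q$. The degenerate cases are immediate: for $k=1$ the bound reads $\pi_q(n)\le q^n/n$, and for $k>n$ the set is empty.

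The only step needing care is the symmetrisation identity, and in particular checking that relaxing to independent $d_i\le n-1$ does not lose the factor $1/n$. It does not, because the $1/n$ is extracted before any relaxation. Everything else is routine. The slightly weaker constant $2-\log 2$ in the statement presumably comes from the original argument in \cite{GKMZ20} handling the harmonic sum differently; the argument above is enough for how Theorem \ref{thm:LDP} is used later, namely bounding the contribution of $F_n^{[3]}(\mathbb{F}_q)$.
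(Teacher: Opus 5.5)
Your argument is correct, and it necessarily takes a different route from the paper, because the paper gives no proof of this statement: it simply imports the bound as Theorem 5.4 of \cite{GKMZ20}. You give a self-contained elementary proof instead. You pass to ordered $k$-tuples of monic irreducibles, after discarding the coprimality-to-$\Delta_E$ and distinctness conditions, both of which only enlarge the count. You then use $\pi_q(d)\le q^d/d$, and bound $S_k(n)=\sum_{d_1+\cdots+d_k=n}\prod_i d_i^{-1}$ through the weight identity $1=\frac1n\sum_j d_j$, which extracts the factor $\frac{k}{n}$ before any relaxation and yields $S_k(n)\le \frac{k}{n}H_{n-1}^{k-1}$. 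As a cross-check, $S_k(n)$ is the coefficient of $x^n$ in $(-\log(1-x))^k$, i.e.\ $S_k(n)=k!\,|s(n,k)|/n!$ with $|s(n,k)|$ the unsigned Stirling numbers of the first kind. Your inequality is therefore exactly the classical bound $|s(n,k)|\le \frac{(n-1)!}{(k-1)!}H_{n-1}^{k-1}$, and every step holds, including the edge cases $k=1$ and $k>n$. The citation buys brevity. Your route makes the paper self-contained on this point and gives the sharper constant $1$ in place of $2-\log 2$, uniformly in $q$; in fact Euler's constant suffices, since $H_{n-1}-\log n$ increases to it. The sharper constant changes nothing of substance here. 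In Proposition \ref{prop:Fn23-contribution} it would only replace the factor $\bigl(1+\frac{2-\log 2}{\log n}\bigr)^{k-1}$ by $\bigl(1+\frac{1}{\log n}\bigr)^{k-1}$, and none of the subsequent estimates depend on the exact constant.
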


The following result will be useful for understanding the contributions from the subset $F_n^{[2]}(\mathbb{F}_q)$.
\begin{theorem}[Proposition 4.13 of \cite{Pa22}] \label{thm:exceptional_set}
    Suppose $n > \exp(\exp(\exp(\exp(e))))$. If $R_2 = \lfloor \frac{\log n}{\log \log \log n} \rfloor$, then we have
    \begin{equation}
        \frac{\# F_n^{[2]}(\mathbb{F}_q)}{\# F_n(\mathbb{F}_q)} < 4 \cdot n^{-(\log \log n)^{1 - (\log \log \log n)^{-1/2}}} < 4 n^{-77762}.
    \end{equation}
\end{theorem}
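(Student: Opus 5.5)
The plan is to bound $\# F_n^{[2]}(\mathbb{F}_q)$ from above by dropping the condition $R_2 < \omega(f) \leq R_1$ and keeping only $\omega(f_*) > R_2$. I would count by factoring $f = f_* f^*$, letting $f_*$ range over products of $j > R_2$ distinct irreducibles of degree at most $\mathfrak{n}$. For a fixed such $f_*$, the cofactor $f^*$ is a monic polynomial of degree $n - \deg f_*$, so there are at most $q^{n-\deg f_*}$ choices for it. This gives
\begin{equation*}
\# F_n^{[2]}(\mathbb{F}_q) \leq q^n \sum_{j > R_2} \frac{1}{j!} \Big( \sum_{\substack{P \text{ irred.} \\ \deg P \leq \mathfrak{n}}} q^{-\deg P} \Big)^{j} \leq q^n \sum_{j > R_2} \frac{(\log \mathfrak{n} + 1)^j}{j!}.
\end{equation*}
The second inequality uses the prime polynomial bound $\#\{P : \deg P = d\} \leq q^d/d$, so that the inner sum is at most $\sum_{d \leq \mathfrak{n}} 1/d \leq \log \mathfrak{n} + 1$. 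For the denominator I would use $\# F_n(\mathbb{F}_q) \geq q^n - q^{n-1} \geq q^n/2$, up to the negligible restriction of coprimality to the discriminant. This denominator bound is the source of part of the constant $4$.

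Next I would bound the tail of the exponential series. Put $L := \log \mathfrak{n} + 1 \leq 2\log\log n + 3$. Once $R_2 \geq 2eL$, the terms decay at least geometrically with ratio $1/2$. Combined with $j! \geq (j/e)^j$, the tail is at most
\begin{equation*}
2 \Big( \frac{eL}{R_2} \Big)^{R_2} = 2\exp\Big( -R_2 \big( \log R_2 - 1 - \log L \big) \Big).
\end{equation*}
Substituting $R_2 = \lfloor \log n / \log\log\log n \rfloor$ gives $\log R_2 = \log\log n - \log\log\log\log n + O(1)$ and $\log L = \log\log\log n + O(1)$. Hence the exponent is
\begin{equation*}
-\log n \cdot \frac{\log\log n - O(\log\log\log n)}{\log\log\log n},
\end{equation*}
and the proportion is bounded by $4\, n^{-(\log\log n)(1 - o(1))/\log\log\log n}$.

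It remains to compare this with the stated exponent $(\log\log n)^{1 - (\log\log\log n)^{-1/2}} = \log\log n \cdot e^{-\sqrt{\log\log\log n}}$. Since $e^{-\sqrt{x}} \leq \tfrac12 \cdot \tfrac{1}{x}$ once $x = \log\log\log n$ is moderately large, our exponent dominates the stated one. The hypothesis $n > \exp(\exp(\exp(\exp(e))))$ should be exactly what makes $x > e^{e}$, so that this comparison holds. The same hypothesis should also absorb the $O(\log\log\log n)$ and floor-function losses and guarantee $R_2 \geq 2eL$. The final numerical bound $4n^{-77762}$ then comes from evaluating $(\log\log n)^{1 - (\log\log\log n)^{-1/2}}$ at the threshold and using monotonicity in $n$.

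I expect the main obstacle to be this last bookkeeping step: making every $O(\cdot)$ explicit, so that the inequalities hold at precisely the stated threshold rather than merely for $n$ sufficiently large. If the explicit constants are tight at the threshold, I would first try replacing $\log\mathfrak{n}+1$ by the sharper $\log\mathfrak{n} + \gamma + o(1)$. If that still does not suffice, I would keep a few more terms of the expansion of $\log R_2$.
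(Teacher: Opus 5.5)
Your proof is correct, and it supplies more than the paper does. The paper's proof only says to rerun Proposition 4.13 of \cite{Pa22} with $m_{n,q}=\log n+\log\log q$ replaced by $\log n$. It then evaluates $(\log\log n)^{1-(\log\log\log n)^{-1/2}}=e^{x-\sqrt{x}}$, with $x=\log\log\log n$, at $x=e^e$ to get $77762$. That evaluation is exactly your last step.

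The rest of your argument gives a self-contained proof in place of the citation: the factorization $f=f_*f^*$, the bound $\#F_n^{[2]}(\mathbb{F}_q)\le q^n\sum_{j>R_2}L^j/j!$ with $L=\log\mathfrak{n}+1$, and the geometric tail estimate. It in fact yields the stronger exponent $(1-o(1))\log\log n/\log\log\log n$.

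The bookkeeping step you flag as the main obstacle is not tight. For $x>e^e$ you have $\log R_2-1-\log L\ge e^x-x-\log x-2>(1-10^{-5})e^x$. The function $xe^{-\sqrt{x}}$ is decreasing for $x>4$ and equals about $0.31$ at $x=e^e$. So your exponent beats the required $\log n\cdot e^{x-\sqrt{x}}$ by a factor of roughly $3$, i.e.\ by an extra factor $n^{-c}$ with $c>10^5$. The condition $R_2\ge 2eL$ also holds with enormous room.

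One small caveat concerns the denominator. Because $F_n(\mathbb{F}_q)$ also imposes coprimality with $\Delta_E$, the inequality $\#F_n(\mathbb{F}_q)\ge q^n-q^{n-1}$ actually goes the other way. The paper itself uses $\#F_n(\mathbb{F}_q)=q^n-q^{n-1}$ elsewhere, so you are consistent with it. In any case, a lower bound of the form $\#F_n(\mathbb{F}_q)\gg_E q^n$ costs only a factor depending on $\deg\Delta_E$, which the surplus $n^{-c}$ absorbs, so the constant $4$ survives.
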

\begin{proof}
    The result follows from adapting the proof of \cite[Proposition 4.13]{Pa22} where we substitute $m_{n,q} := \log n + \log \log q$ with $\log n$. The exponent $-77762$ comes from a crude estimate of the function $-(\log \log n)^{1 - (\log \log \log n)^{-1/2}}$ when $n = \exp(\exp(\exp(\exp(e))))$.
\end{proof}

Lastly, to obtain the contributions from the subset $F_n^{[1]}(\mathbb{F}_q)$, as well as constructing the governing stochastic process for computing distribution of twist families of Selmer groups over $F_n^{main}(\mathbb{F}_q)$, we use the effective Chebotarev density theorem.
\begin{theorem}[Proposition 6.4.8 of \cite{FJ08} and Theorem 3.1 of \cite{Pa22}] \label{thm:chebotarev}
    Given a geometric Galois extension $L/K$, let $C \subset G = \mathrm{Gal}(L/K)$ be a conjugacy class. Denote by $g_L$ the genus of $L$. Then
    \begin{equation}
        \Big|\#\Big\{ v \text{ a place of } K : \mathrm{Frob}_v \in C, \dim_{\mathbb{F}_q} ({\mathcal{O}_K}/{(v)}) = n \Big\} - \frac{|C|}{|G|} \frac{q^n}{n} \Big| < \frac{6|C|}{n|G|} (|G| + g_L) q^{n/2}.
    \end{equation}
\end{theorem}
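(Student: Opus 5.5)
The plan is the standard function-field proof of Chebotarev: apply the Riemann hypothesis for curves to twists of the curve of $L$ by elements of $C$. This is what \cite[Proposition 6.4.8]{FJ08} does; \cite[Theorem 3.1]{Pa22} then tracks the constants.

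\textbf{Setup.} Let $Y \to X=\mathbb{P}^1_{\mathbb{F}_q}$ be the cover of smooth projective curves corresponding to $L/K$. It is geometric, so $Y$ is geometrically irreducible of genus $g_L$ over $\mathbb{F}_q$, and $G$ acts on $Y$. Let $F$ be the $q$-Frobenius. For $g\in G$ and $m\ge 1$ define
\[
N_g(m):=\#\{P\in Y(\overline{\mathbb{F}}_q): F^m(P)=g(P)\}.
\]
This equals the number of $\mathbb{F}_{q^m}$-points of the twist of $Y$ by the cocycle determined by $g$, which is again a curve of genus $g_L$. The Weil bound therefore gives
\[
|N_g(m)-(q^m+1)|\le 2g_Lq^{m/2}.
\]

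\textbf{Relating $N_g(n)$ to places.} Let $v$ be a place of $K$ of degree $n$ that is unramified in $L$. Then $v$ has exactly $n|G|$ geometric points of $Y$ above it. For each such point $P$ we have $F^n(P)=\sigma_P(P)$ for a unique $\sigma_P\in G$, and $\sigma_P$ runs through $\mathrm{Frob}_v$. Hence $v$ contributes exactly $n|G|$ to $\sum_{g\in C}N_g(n)$ if $\mathrm{Frob}_v\in C$, and contributes $0$ otherwise. The remaining contributions to $\sum_{g\in C}N_g(n)$ come from two sources, both counted via the same fibre structure.
\begin{itemize}
\item Points over ramified places. By Riemann--Hurwitz there are $O(g_L+|G|)$ of these.
\item Points over unramified places of degree $d\mid n$ with $d\le n/2$. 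Each such point has a single $\sigma_P$, so summing over $g\in C$ counts it at most once. There are at most $|G|\cdot\#\{x\in X(\overline{\mathbb{F}}_q):\deg x\le n/2\}\le 2|G|q^{n/2}$ of them.
\end{itemize}

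\textbf{Assembling.} Dividing by $n|G|$ gives
\[
\#\{v:\deg v=n,\ \mathrm{Frob}_v\in C\}=\frac{1}{n|G|}\sum_{g\in C}N_g(n)+O\!\Big(\frac{q^{n/2}+g_L+|G|}{n}\Big).
\]
Substituting the Weil estimate for each $N_g(n)$ yields the main term $\frac{|C|}{|G|}\frac{q^n}{n}$, plus $\frac{|C|}{n|G|}\big(1+2g_Lq^{n/2}\big)$, plus the error terms above. Every error term is at most a small constant times $\frac{|C|}{n|G|}(|G|+g_L)q^{n/2}$, because $|C|\ge 1$ and $|G|/|G|=1$.

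\textbf{Main obstacle.} The delicate step is making the constant $6$ work. This requires bounding the ramified contribution by $2g_L+2|G|$ through Riemann--Hurwitz, and bounding the low-degree contribution by $2|G|q^{n/2}$ by summing the geometric series $\sum_{d\le n/2}(q^d+1+2g_Lq^{d/2})$. The term $2g_Lq^{d/2}$ in that sum must be absorbed into the $g_L$ part of the bound.
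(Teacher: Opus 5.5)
The paper gives no argument of its own for this statement. It imports the bound from \cite[Proposition 6.4.8]{FJ08}, as specialised in \cite[Theorem 3.1]{Pa22}. Your argument is sound, modulo the bookkeeping below, and it takes a somewhat different route from that source. As far as I recall, Fried--Jarden argue field-theoretically. They fix $\tau\in C$, reduce to the cyclic extension $L/L^{\langle\tau\rangle}$, and count primes there with Frobenius $\tau$ using the Riemann hypothesis after a constant field extension. They then descend to $K$, discarding primes whose restriction to $K$ has smaller degree; this produces the lower-order terms of their bound, which for $K=\mathbb{F}_q(t)$ are absorbed into the factor $6$. You instead sum the twisted fixed-point counts $N_g(n)$ over all of $C$ on $Y$ itself and sort the contributions by fibres over $\mathbb{P}^1$. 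Your only inputs are Weil's bound for twists of $Y$ and Riemann--Hurwitz, with no intermediate field or Deuring-type reduction. That is shorter and gives the constant directly. The price is handling ramified fibres by hand, whereas the field-theoretic version is set up for an arbitrary base and for extensions that enlarge the constant field.

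Write $\sum_{g\in C}N_g(n)=n|G|A+B_{\mathrm{low}}+B_{\mathrm{ram}}$. Here $A$ is the count in the statement, $B_{\mathrm{low}}$ counts unramified points over places of degree $d\mid n$ with $d<n$, and $B_{\mathrm{ram}}$ counts points over branch points. Two of your estimates for these terms need correcting.

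\textbf{Ramified contribution.} A ramified geometric point $P$ can be counted more than once. When it is nonempty, $\{g\in G: F^n(P)=g(P)\}$ is a coset of the inertia group $I_P$, and such a coset can meet $C$ in several elements. For example, take $C$ to be the double transpositions in $S_4$ and $I_P=\langle(12)(34)\rangle$. So $B_{\mathrm{ram}}\le\sum_{P\ \mathrm{ramified}}e_P\le 2\deg\mathrm{Diff}(Y/\mathbb{P}^1)=4g_L+4|G|-4$. Your bound $2g_L+2|G|$ on the number of ramified points does not bound $B_{\mathrm{ram}}$.

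\textbf{Low-degree contribution.} No $2g_Lq^{d/2}$ term belongs here. You are counting $|G|$ times the geometric points of $\mathbb{P}^1$ of degree $d\mid n$ with $d<n$. There are fewer than $2q^{n/2}$ of these when $n\ge 2$ and none when $n=1$, so $B_{\mathrm{low}}<2|G|q^{n/2}$. The points of $Y$ above them are generally not $\mathbb{F}_{q^d}$-rational, so Weil's bound for $Y$ is not the relevant input. Over a general base the extra term would involve $g_K$, which is $0$ here.

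With these bounds you get
\[
\bigl|n|G|A-|C|q^n\bigr|\le 2|C|g_Lq^{n/2}+\max\bigl\{|C|,\ 2|G|q^{n/2}+4g_L+4|G|-4\bigr\}<6|C|(|G|+g_L)q^{n/2},
\]
using $|C|\ge 1$ and $q^{n/2}>1$. So the constant $6$ is indeed obtained.

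Depending on the Frobenius convention, $\sigma_P$ ranges over $\mathrm{Frob}_v$ or its inverse class. This is harmless, since the bound is invariant under $C\mapsto C^{-1}$.
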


\subsection{Markov operators governing distributions of Selmer groups}
\label{section:markov}

This section is based on the author's previous work on generalizing the work by Klagsbrun, Mazur, and Rubin \cite{KMR14} to the global function field setting. We borrow some (but not all) of the notations from \cite{Pa22} which will be used to compute the lower bound on the range of convergence of the average size of the $1-\sigma_{l,f}$ Selmer groups of the abelian variety $A_f$ over $K$ as $f$ ranges over square-free polynomials of degree $n$ over $\mathbb{F}_q$.

\begin{definition}
    The following notations are adapted from \cite[Definition 5.1, 5.2]{Pa22}, which adhere to the notations as defined in \cite[Section 5]{KMR14}.
    \begin{itemize}
    \item $\Omega_\sigma$: The set of finite Cartesian products of local character
    \begin{equation*}
        \chi := (\chi_v)_v \in \Omega_\sigma := \prod_{v \in \Sigma_E(\sigma)} \text{Hom} ( \text{Gal}(\overline{K}_v/K_v), \mu_l)
    \end{equation*}
    such that the component $\chi_v$ is ramified if $v \mid \sigma$.
    \item $\Omega_E$: The set of finite Cartesian products of local characters 
    \begin{equation*}
        \chi := (\chi_v)_v \in \Omega_\sigma := \prod_{v \in \Sigma_E} \text{Hom} ( \text{Gal}(\overline{K}_v/K_v), \mu_l).
    \end{equation*}
    \item $\Omega_{\chi,\mathfrak{v}}$: Given a character $\chi \in \Omega_\sigma$, it is a set of characters $\chi' \in \Omega_{\sigma \mathfrak{v}}$ such that \textbf{(1)} for any $v \in \Sigma_E(\sigma)$, $\chi'_v = \chi_v$, and \textbf{(2)} at $\mathfrak{v}$, $\chi'_\mathfrak{v}$ is ramified. Note that $\# \Omega_{\chi, \mathfrak{v}} = l(l-1)$.
    \item Given $\chi_v \in \text{Hom}( \text{Gal}(\overline{K}_v/K_v), \mu_l)$, we denote by $A_{\chi_v}$ the $l-1$ dimensional abelian variety over $K_v$ defined as
    \begin{equation*}
        A_{\chi_v} := \text{Ker}\left( \text{Res}_{K_v}^{K_v^{\chi_v}} E \to E \right),
    \end{equation*}
    where $K_v^{\chi_v}$ is the cyclic order-$l$ extension over $K_v$ induced from the cyclic character $\chi_v$.
    \item Given $\chi \in \Omega_\sigma$, the local Selmer group of $E$ associated to $\chi$ is defined as
    \begin{equation*}
        \Sel(E[l], \chi) := \text{Ker} \left( H^1_{\et}(K, E[l]) \to \prod_v H^1_{\et}(K_v,E[l]) / \mathcal{H}_v^\chi \right),
    \end{equation*}
    where
    \begin{equation*}
        \mathcal{H}_v^\chi := \begin{cases}
            &\text{im} \left( \delta_v^\chi: A_{\chi_v}(K_v) / \pi A_{\chi_v}(K_v) \to H^1(K_v,E[l]) \right) \\
            &H^1(\mathcal{O}_{K_v},E[l]).
        \end{cases}
    \end{equation*}
    Given a square-free polynomial $f$, we choose a cyclic order-$l$ character $\chi_f \in \text{Hom}(\text{Gal}(\overline{K}/K), \mu_l)$ whose fixed field is the cyclic order-$l$ extension $K(\sqrt[l]{f})$. It follows that 
    \begin{equation*}
        \Sel_{1-\sigma_{l,f}}(A_f/K) = \Sel(E[l], (\chi_{f,v})_{v \in \Sigma_E(f)})
    \end{equation*}
    where $\chi_{f,v}$ is the restriction of the global character $\chi_f$ at a place $v$.
    \item Given $\chi \in \Omega_\sigma$, we denote by $\mathrm{rk}(\chi)$ to be
    \begin{equation*}
        \mathrm{rk}(\chi) := \dim_{\mathbb{F}_l} \mathrm{Sel}(E[l], \chi).
    \end{equation*}
    \end{itemize}
    \item Given $\chi \in \Omega_\sigma$ and a place $\mathfrak{v}$ of $K$, we denote by $t_\chi(\mathfrak{v})$ the image of $\mathrm{Sel}(E[l],\chi)$ with respect to the localization map at $\mathfrak{v}$, i.e.
    \begin{equation*}
        t_\chi(\mathfrak{v}) := \dim_{\mathbb{F}_l} \mathrm{Image} \Big( \mathrm{loc}_{\mathfrak{v}}: \mathrm{Sel}(E[l],\chi) \to H^1(\mathcal{O}_{K_v}, E[l]) \Big).
    \end{equation*}
\end{definition}

Using these notations, we recall some results from \cite{Pa22} which will be crucial to determine the rate of convergence of distribution of twist families of Selmer groups. 

The first result is \cite[Proposition 5.3]{Pa22}, which compares the dimension of local Selmer groups of $\mathrm{Sel}(E[l], \chi')$ and $\mathrm{Sel}(E[l],\chi)$ such that $\chi' \in \Omega_{\chi,\mathfrak{v}}$.
\begin{proposition}[Proposition 5.3 of \cite{Pa22}] \label{prop:Selmer_vary}
    Let $E$ be an elliptic curve which satisfies Condition \ref{condition}. Given a square-free product of places $\sigma$ coprime to the set of elements in $\Sigma_E$, let $\mathfrak{v}$ be a place such that $\mathfrak{v} \not\in \Sigma_{E}(\sigma)$. Then given a fixed character $\chi \in \Omega_{\sigma}$, we have the following equation for any $\chi' \in \Omega_{\chi, \mathfrak{v}}$.
    \begin{equation*}
        \mathrm{rk}(\chi') - \mathrm{rk}(\chi) = \begin{cases}
            2 &\text{ if } \mathfrak{v} \in \mathcal{P}_2 \text{ and } t_{\chi}(\mathfrak{v}) = 0 \text{ for exactly } l-1 \text{ many } \chi' \in \Omega_{\chi,\mathfrak{v}}, \\
            1 &\text{ if } \mathfrak{v} \in \mathcal{P}_1 \text{ and } t_{\chi}(\mathfrak{v}) = 0, \\
            -1 &\text{ if } \mathfrak{v} \in \mathcal{P}_1 \text{ and } t_{\chi}(\mathfrak{v}) = 1, \\
            -2 &\text{ if } \mathfrak{v} \in \mathcal{P}_2 \text{ and } t_{\chi}(\mathfrak{v}) = 2, \\
            0 &\text{ otherwise}.
        \end{cases}
    \end{equation*}
\end{proposition}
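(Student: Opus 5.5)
The plan is to follow the Klagsbrun–Mazur–Rubin comparison argument \cite[Section 3--5]{KMR14}, adapted to function fields as in \cite{Pa22}. Since $\chi$ and $\chi'$ agree at every place of $\Sigma_E(\sigma)$ and differ only at $\mathfrak{v}$, the two Selmer structures differ only in their local condition at $\mathfrak{v}$. For $\chi$ the condition at $\mathfrak{v}$ is the unramified subspace $H^1_{\mathrm{ur}} := H^1(\mathcal{O}_{K_{\mathfrak{v}}},E[l])$. For $\chi'$ it is $\mathcal{H}^{\chi'}_{\mathfrak{v}} = \mathrm{im}\,\delta^{\chi'}_{\mathfrak{v}}$. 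Because $\chi'_{\mathfrak{v}}$ is ramified and $E$ has good reduction at $\mathfrak{v}$, I will show that $\mathcal{H}^{\chi'}_{\mathfrak{v}}$ is a maximal isotropic subspace for the local Tate pairing satisfying $\mathcal{H}^{\chi'}_{\mathfrak{v}} \cap H^1_{\mathrm{ur}} = 0$. This "transverse" property is \cite[Prop. 5.2]{KMR14}. To prove it, I will use the fact that $A_{\chi'_{\mathfrak v}}$ has purely additive reduction over $K_{\mathfrak v}$, so its points modulo $(1-\sigma)$ are computed over the totally ramified extension.

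Since $\mu_l \subset K$, the tame quotient of $\mathrm{Gal}(\overline{K}_{\mathfrak v}/K_{\mathfrak v})$ acts through $\mathrm{Frob}$ and the inertia generator. It follows that $\dim H^1(K_{\mathfrak v},E[l]) = 2\dim E(K_{\mathfrak v})[l] = 2i$ for $\mathfrak v\in\mathcal{P}_i$, and that $H^1_{\mathrm{ur}}$ and $\mathcal{H}^{\chi'}_{\mathfrak v}$ each have dimension $i$. In particular, if $\mathfrak v\in\mathcal P_0$ then both conditions are zero, so $\mathrm{rk}(\chi')=\mathrm{rk}(\chi)$.

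Next I will run the standard four-term comparison. Define the strict Selmer group $S_{\mathrm{str}}$, with local condition $0$ at $\mathfrak v$, and the relaxed Selmer group $S_{\mathrm{rel}}$, with local condition $H^1(K_{\mathfrak v},E[l])$. Both $\mathrm{Sel}(E[l],\chi)$ and $\mathrm{Sel}(E[l],\chi')$ sit between them. Poitou–Tate global duality, combined with self-duality of all the local conditions, gives
\begin{equation*}
\dim S_{\mathrm{rel}} - \dim S_{\mathrm{str}} = \dim H^1(K_{\mathfrak v},E[l])/2 = i.
\end{equation*}
Moreover, the images of $\mathrm{loc}_{\mathfrak v}(S_{\mathrm{rel}})$ and of the strict group are orthogonal complements of each other. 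Writing $t=t_\chi(\mathfrak v)$, we get $\mathrm{rk}(\chi)=\dim S_{\mathrm{str}}+t$. On the other hand, $\mathrm{loc}_{\mathfrak v}(S_{\mathrm{rel}})$ is an $i$-dimensional-plus-$t$ subspace of dimension $i+t$ containing $\mathrm{loc}(\mathrm{Sel}(\chi))$, and $\mathrm{rk}(\chi')=\dim S_{\mathrm{str}}+\dim(\mathrm{loc}_{\mathfrak v}(S_{\mathrm{rel}})\cap\mathcal H^{\chi'}_{\mathfrak v})$.

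For $i=1$, transversality immediately yields $\mathrm{rk}(\chi')-\mathrm{rk}(\chi)=\pm1$ according to whether $t=0$ or $t=1$. This is because $\mathrm{loc}(S_{\mathrm{rel}})$ is either a Lagrangian line meeting $\mathcal{H}^{\chi'}$, or the whole space. For $i=2$, $t=2$ gives $\mathrm{loc}(S_{\mathrm{rel}})=\mathrm{loc}(\mathrm{Sel}(\chi))=H^1_{\mathrm{ur}}$, which is transverse to $\mathcal{H}^{\chi'}$, so the difference is $-2$. The case $t=1$ gives difference $0$. The case $t=0$ gives $\dim\mathrm{loc}(S_{\mathrm{rel}})=2$, with the difference equal to $\dim(\mathrm{loc}(S_{\mathrm{rel}})\cap \mathcal H^{\chi'}_{\mathfrak v})\in\{0,1,2\}$.

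The main obstacle is the counting statement for $\mathfrak v\in\mathcal P_2$ with $t=0$: exactly $l-1$ of the $l(l-1)$ characters $\chi'\in\Omega_{\chi,\mathfrak v}$ give $+2$, and the rest give $0$. (The parity constraint, via the Cassels–Tate/theta characteristic argument, rules out the value $1$.) My plan is to identify the $l$ transverse Lagrangians $\mathcal H^{\chi'}_{\mathfrak v}$, for $\chi'$ with a fixed ramified restriction up to the $\mathbb F_l^\times$-scaling, with the $l$ Lagrangians in $H^1_{\mathrm{ur}}\oplus H^1_{\mathrm{tr}}$ transverse to $H^1_{\mathrm{ur}}$, following \cite[Prop. 5.8]{KMR14}. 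These are graphs of symmetric maps $H^1_{\mathrm{tr}}\to H^1_{\mathrm{ur}}$ (here twisting by an unramified character shifts the graph). As the unramified part of $\chi'_{\mathfrak v}$ ranges over $l$ values, exactly one graph equals the fixed Lagrangian $\mathrm{loc}(S_{\mathrm{rel}})$, while the others meet it trivially by dimension/parity. Each such graph arises from $l-1$ characters, giving the count $l-1$. Verifying that the graphs really are distinct, which requires $\mu_l\subset K$ and $\mathfrak v\in\mathcal P_2$, is where I expect the careful local computation to be needed.
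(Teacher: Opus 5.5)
Your route is the standard Klagsbrun--Mazur--Rubin comparison that \cite[Proposition 5.3]{Pa22} adapts; the present paper gives no proof and only cites that result. The skeleton is right, but the linear algebra at $\mathfrak v$, where all the content lies, is misstated. The space $W:=\mathrm{loc}_{\mathfrak v}(S_{\mathrm{rel}})$ always has dimension exactly $i$. Indeed, the kernel of $\mathrm{loc}_{\mathfrak v}$ on $S_{\mathrm{rel}}$ is $S_{\mathrm{str}}$, so this is just your own identity $\dim S_{\mathrm{rel}}-\dim S_{\mathrm{str}}=i$. What Poitou--Tate gives is $W=W^{\perp}$; the image of $S_{\mathrm{str}}$ is $0$, so your ``orthogonal complement'' statement is vacuous. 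Your claim $\dim W=i+t$ is therefore false. Worse, the option ``$W$ is the whole space'' for $i=1$, $t=1$ would give $\dim(W\cap\mathcal H^{\chi'}_{\mathfrak v})=1$, hence a difference of $0$ rather than $-1$.

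The correct $i=1$ argument runs as follows. If $t=1$, then $W=H^1_{\mathrm{ur}}$, giving $-1$. If $t=0$, then $W$ is an isotropic line different from $H^1_{\mathrm{ur}}$. To conclude $W=\mathcal H^{\chi'}_{\mathfrak v}$ you need that there are \emph{exactly two} isotropic lines, and transversality does not give this.
\begin{itemize}
\item For $l$ odd it holds because a nondegenerate symmetric plane containing an isotropic line is hyperbolic, and so has exactly two isotropic lines.
\item For $l=2$ the pairing is alternating, so all three lines are isotropic. You must use the Poonen--Rains quadratic form: $W$ is $q_{\mathfrak v}$-singular because global classes are, and a hyperbolic quadratic plane over $\mathbb F_2$ has exactly two singular lines. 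Equivalently, invoke the parity theorem.
\end{itemize}
Likewise the case $i=2$, $t=1$ needs an argument. Since $W\cap\mathcal H^{\chi'}_{\mathfrak v}$ injects into $W/(W\cap H^1_{\mathrm{ur}})$, the difference is $-1$ or $0$, and $-1$ must be excluded by parity. For $l$ odd one can instead check directly, in the notation below, that $W=\mathrm{Hom}(G_{K_{\mathfrak v}},\mathbb F_l)\otimes P_0$, where $u\otimes P_0$ spans $W\cap H^1_{\mathrm{ur}}$; this meets every $\chi'_{\mathfrak v}\otimes E[l]$ in a line.

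In the case $i=2$, $t=0$ the real content is not distinctness of the graphs. It is that \emph{every} Lagrangian transverse to $H^1_{\mathrm{ur}}$ is some $\mathcal H^{\chi'}_{\mathfrak v}$; only then does ``exactly one graph equals $W$'' follow. For $\mathfrak v\in\mathcal P_2$ the module $E[l]$ is trivial, so $H^1(K_{\mathfrak v},E[l])=\mathrm{Hom}(G_{K_{\mathfrak v}},\mathbb F_l)\otimes E[l]$, with pairing $(\cup)\otimes e_l$. This is a \emph{symmetric} form, being a product of two alternating ones. Fix an unramified character $u$ and a ramified character $r$.

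For $l$ odd, a short Kummer computation gives $A_{\chi'}(K_{\mathfrak v})/\pi\cong E(K_{\mathfrak v})[l]$, with $\delta^{\chi'}_{\mathfrak v}(P)$ equal to $\chi'_{\mathfrak v}\otimes P$ up to a fixed nonzero scalar. Hence $\mathcal H^{\chi'}_{\mathfrak v}=\chi'_{\mathfrak v}\otimes E[l]$, which is the graph of $r\otimes P\mapsto c\,u\otimes P$ when $\chi'_{\mathfrak v}\in\mathbb F_l^\times(r+cu)$.

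A Lagrangian transverse to $H^1_{\mathrm{ur}}$ is the graph of $r\otimes P\mapsto u\otimes\phi(P)$ with $\phi$ self-adjoint for $e_l$, i.e.\ with $(P,Q)\mapsto e_l(P,\phi Q)$ \emph{alternating}. On a $2$-dimensional symplectic space this forces $\phi$ to be a scalar. So there are exactly $l$ such Lagrangians, and they are precisely the $\mathcal H^{\chi'}_{\mathfrak v}$. If your ``symmetric maps'' meant symmetric bilinear forms, which is the right condition for a symplectic ambient space, you would get $l^3$ of them and the step would fail.

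With the correct count, $W$ equals $\mathcal H^{\chi'}_{\mathfrak v}$ for exactly $l-1$ characters. Two distinct scalar graphs meet in $\ker((c-c')\,\mathrm{id})=0$, so the value $1$ is excluded without any parity input. For $l=2$ the Kummer image acquires an unramified correction $\delta_E(P)$, and the count of two transverse Lagrangians again requires the quadratic form.
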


An immediate corollary of Proposition \ref{prop:Selmer_vary} is the following trivial upper bound on the dimension of Selmer groups, which will be useful to control the contributions from the sets $F_n^{[2]}(\mathbb{F}_q)$ and $F_n^{[3]}(\mathbb{F}_q)$.
\begin{corollary} \label{cor:Selmer_vary}
    Given a square-free polynomial $f$ of degree $m$, we have 
    \begin{equation*}
        \dim_{\mathbb{F}_l} \mathrm{Sel}_{1-\sigma_{l,f}}(A_f/K) \leq D_{E,l} + 2 \omega(f),
    \end{equation*}
    where $D_{E,l}$ is an explicitly computable constant depending only on the degree of $\Delta_E$ and $l$, and is independent of $n$ and $q$.
\end{corollary}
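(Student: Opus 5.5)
We will prove Corollary \ref{cor:Selmer_vary} by induction on $\omega(f)$, building the character $(\chi_{f,v})_{v \in \Sigma_E(f)}$ one ramified place at a time and applying Proposition \ref{prop:Selmer_vary} at each step. Write $f = g_1 g_2 \cdots g_k$ with $k = \omega(f)$ and distinct monic irreducible $g_i$, all coprime to $\Delta_E$ because $f \in F_n(\mathbb{F}_q)$. Set $\sigma_j := g_1 \cdots g_j$ and let $\chi^{(j)} \in \Omega_{\sigma_j}$ be the restriction of $\chi_f$ to the places in $\Sigma_E(\sigma_j)$. Since $\chi_f$ is ramified exactly at the places dividing $f$ (and possibly at places of $\Sigma_E$), each $\chi^{(j)}$ lies in $\Omega_{\sigma_j}$. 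Moreover, $\chi^{(j+1)}$ agrees with $\chi^{(j)}$ on $\Sigma_E(\sigma_j)$ and is ramified at $g_{j+1}$, so $\chi^{(j+1)} \in \Omega_{\chi^{(j)}, g_{j+1}}$. Note also that $g_{j+1} \notin \Sigma_E(\sigma_j)$, so the hypotheses of Proposition \ref{prop:Selmer_vary} hold at every step.

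Proposition \ref{prop:Selmer_vary} then gives $\mathrm{rk}(\chi^{(j+1)}) - \mathrm{rk}(\chi^{(j)}) \leq 2$ in every case. Telescoping yields
\begin{equation*}
\dim_{\mathbb{F}_l} \mathrm{Sel}_{1-\sigma_{l,f}}(A_f/K) = \mathrm{rk}(\chi^{(k)}) \leq \mathrm{rk}(\chi^{(0)}) + 2k,
\end{equation*}
where the equality is the identification $\Sel_{1-\sigma_{l,f}}(A_f/K) = \Sel(E[l],(\chi_{f,v})_{v\in\Sigma_E(f)})$ recorded in the definitions above.

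The base term $\chi^{(0)} \in \Omega_E$ depends on $f$ only through its restriction to the finitely many bad places. The remaining step, which is the only one needing real care, is to bound $\mathrm{rk}(\chi^{(0)})$ uniformly by a constant $D_{E,l}$ that does not depend on $q$ or $n$. We would define $D_{E,l} := \max_{\chi \in \Omega_E} \mathrm{rk}(\chi)$ and bound it as follows.

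First, $\mathrm{Sel}(E[l],\chi)$ sits inside the classes in $H^1_{\et}(K,E[l])$ that are unramified outside $\Sigma_E$. Restricting to $L = K(E[l])$ and using that $\Gal(L/K)$ contains $\mathrm{SL}_2(\mathbb{F}_l)$, the kernel of restriction is controlled by $H^1(\Gal(L/K),E[l])$, which is bounded in terms of $l$. The image lies in $\mathrm{Hom}(\pi_1, E[l])$ for covers of $L$ unramified outside the places above $\Sigma_E$ and satisfying the local conditions at $\Sigma_E$.

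For a cruder route that avoids dependence on $q$, compare with an everywhere unramified Selmer condition: by the Greenberg–Wiles/Poitou–Tate Euler characteristic formula, $\mathrm{rk}(\chi)$ differs from the dimension of the unramified Selmer group $\mathrm{Sel}(E[l],\chi_{\mathrm{triv}})$ by at most $\sum_{v \in \Sigma_E} \dim H^1(K_v,E[l]) \leq 4\#\Sigma_E$. Here $\#\Sigma_E \leq \deg \Delta_E$. The unramified Selmer group is in turn $\mathrm{Sel}_l(E/K)$ modified at bad places, and its dimension is bounded by $2 + \mathrm{rank}$ plus the Sha contribution. Grothendieck–Ogg–Shafarevich bounds the geometric $H^1$ of the étale sheaf $E[l]$ on $\mathbb{P}^1$ minus $\Sigma_E$ by a quantity depending only on $\deg \Delta_E$ and $l$. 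The arithmetic $H^1$ over $\mathbb{F}_q$ injects, up to an $H^0$ term of dimension at most $2$, into the Frobenius invariants of this geometric group, hence is bounded independently of $q$. This yields $D_{E,l}$ depending only on $\deg \Delta_E$ and $l$, completing the proof.
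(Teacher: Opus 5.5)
Your argument is correct, and its first half is exactly the paper's: telescoping Proposition \ref{prop:Selmer_vary} one ramified place at a time gives $\dim_{\mathbb{F}_l}\mathrm{Sel}_{1-\sigma_{l,f}}(A_f/K)\le\max_{\chi\in\Omega_E}\mathrm{rk}(\chi)+2\omega(f)$. The two routes diverge in bounding the base term.

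The paper argues locally, invoking Tate's algorithm and bounds on Tamagawa factors at purely additive places. You argue globally. Every $\mathrm{Sel}(E[l],\chi)$ with $\chi\in\Omega_E$ consists of classes unramified outside $\Sigma_E$, so it lies in $H^1_{\et}(U,E[l])$ with $U=\mathbb{P}^1\setminus\Sigma_E$, and you bound this group via Hochschild--Serre and Grothendieck--Ogg--Shafarevich. Since the characteristic is at least $5$, the Swan conductors vanish, and $U$ is affine because $\Sigma_E\neq\emptyset$. This gives explicitly $\dim H^1_{\et}(U,E[l])\le 2\,\#\Sigma_E(\overline{\mathbb{F}}_q)$. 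That bound is controlled by $\deg\Delta_E$ alone and is visibly independent of $q$ (and even of $l$).

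Your route supplies the global input that the paper's one-line local citation leaves implicit. Tamagawa bounds only control how far the local conditions at bad places can move; one still needs a $q$-uniform bound on some base Selmer group. The paper's route, in turn, ties $D_{E,l}$ to concrete reduction data and could be sharper for a specific $E$.

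One tidying point: the comparison with $\mathrm{Sel}(E[l],\chi_{\mathrm{triv}})$ via Greenberg--Wiles is unnecessary. So is the remark that its dimension is ``$2+\mathrm{rank}$ plus the Sha contribution'', which is not by itself a $q$-uniform bound. The Hochschild--Serre/GOS estimate applies directly to each $\mathrm{Sel}(E[l],\chi)$, as your own first observation in that step already shows.
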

\begin{proof}
    By Proposition \ref{prop:Selmer_vary}, we have
    \begin{equation*}
        \dim_{\mathbb{F}_l} \mathrm{Sel}_{1-\sigma_{l,f}}(A_f/K) = \dim_{\mathbb{F}_l} \mathrm{Sel}(E[l], (\chi_{f,v})_{v \in \Sigma_E(f)}) \leq \max_{\chi \in \Omega_E} \mathrm{rk}(\chi) + 2 \omega(f).
    \end{equation*}
    Using Tate's algorithm \cite[Table 4.1, p.365]{AdvancedAEC} and upper bounds on Tamagawa factors at places of purely additive reduction \cite[Corollary 1.15]{LO85}, we obtain that there exists an explicitly computable constant $D_{E,l}$ (depending only on the degree of $\Delta_E$ and $l$) such that ${\max_{\chi \in \Omega_E} \mathrm{rk}(\chi)} \leq D_{E,l}$.
\end{proof}

Next we state two applications of Chebotarev density theorem obtained from \cite{Pa22}. The first application regards effective Chebotarev density theorem for $(\mathbb{Z}/l \mathbb{Z})^{\oplus m}$ Galois extensions over $K$.
\begin{proposition}[Corollary 4.17 of \cite{Pa22}] \label{prop:cheb1}
    Let $E$ be an elliptic curve which satisfies Condition \ref{condition}. Suppose $h_1, h_2, \cdots, h_w$ are irreducible polynomials over $\mathbb{F}_q$. Choose $n, q$ such that $\sum_{k=1}^w h_k \leq n$ and $w \leq \rho \log n$ for some $\rho > 1$.
    \begin{enumerate}
        \item Suppose $l \geq 5$, or $K(\sqrt[l]{h_1}, \cdots, \sqrt[l]{h_k}) \cap K(E[l]) = K$. Then for any element $a \in \mu_l^{\oplus w}$,
        \begin{equation}
            \Big| \frac{\#\{v \in \mathcal{P}_k(i) : \left( \left( \frac{v}{h_k} \right) \right)_{k=1}^w = a \in \mu_l^{\oplus w}\}}{\# \mathcal{P}_k(i)} - \frac{1}{p^{w}} \Big| < 6(2g_{E[l]} + 2l^3 - l - 2) \cdot l^w \cdot \frac{q^{-i/2}}{i}.
        \end{equation}
        In fact, if $i > \mathfrak{n}$, then
        \begin{equation}
            \Big| \frac{\#\{v \in \mathcal{P}_k(i) : \left( \left( \frac{v}{h_k} \right) \right)_{k=1}^w = a \in \mu_l^{\oplus w}\}}{\# \mathcal{P}_k(i)} - \frac{1}{p^{w}} \Big| < 6(2g_{E[l]} + 2l^3 - l - 2) \cdot n^{-2 \log n + \rho \log l}.
        \end{equation}
        \item Suppose $l \leq 3$, and $K(\sqrt[l]{h_1}, \cdots, \sqrt[l]{h_k}) \cap K(E[l]) \neq K$. Then there are $l^w - l^{w-1}$ many elements $a \in \mu_l^{\oplus w}$ such that $\left( \left( \frac{v}{h_k} \right) \right)_{k=1}^w \neq a$ for all $v \in \mathcal{P}_k(i)$. For the other $l^{w-1}$ many elements $a \in \mu_l^{\oplus w}$, we have
        \begin{equation}
            \Big| \frac{\#\{v \in \mathcal{P}_k(i) : \left( \left( \frac{v}{h_k} \right) \right)_{k=1}^w = a \in \mu_l^{\oplus w}\}}{\# \mathcal{P}_k(i)} - \frac{1}{p^{w-1}} \Big| < 6(2g_{E[l]} + 2l^3 - l - 2) \cdot l^w \cdot \frac{q^{-i/2}}{i}.
        \end{equation}
        In fact, if $i > \mathfrak{n}$, then
        \begin{equation}
            \Big| \frac{\#\{v \in \mathcal{P}_k(i) : \left( \left( \frac{v}{h_k} \right) \right)_{k=1}^w = a \in \mu_l^{\oplus w}\}}{\# \mathcal{P}_k(i)} - \frac{1}{p^{w-1}} \Big| < 6(2g_{E[l]} + 2l^3 - l - 2) \cdot n^{-2 \log n + \rho \log l}.
        \end{equation}
    \end{enumerate}
\end{proposition}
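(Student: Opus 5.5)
We would derive Proposition \ref{prop:cheb1} from Theorem \ref{thm:chebotarev}, applied to the compositum $L = K(E[l], \sqrt[l]{h_1}, \dots, \sqrt[l]{h_w})$. The key observation is that, since $\mu_l \subset K$, the $l$-th power residue symbols $\left(\left(\frac{v}{h_k}\right)\right)$ are determined by the Frobenius of $v$ in $K(\sqrt[l]{h_1},\dots,\sqrt[l]{h_w})/K$ (by $l$-th power reciprocity over $\mathbb{F}_q(t)$, up to a harmless twist by the leading coefficient and degrees). Likewise, the condition $v \in \mathcal{P}_k$, i.e. $\dim E(K_v)[l] = k$, is determined by the conjugacy class of $\mathrm{Frob}_v$ in $\Gal(K(E[l])/K)$, namely by the dimension of the fixed space of $\mathrm{Frob}_v$ on $E[l]$. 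Hence both conditions together amount to $\mathrm{Frob}_v$ lying in a union $C$ of conjugacy classes of $G = \Gal(L/K)$.

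\textbf{Case (1).} Here the two fields are linearly disjoint, so $G \cong \Gal(K(E[l])/K) \times \mu_l^{\oplus w}$ (the Kummer part has full rank $w$ because the $h_k$ are distinct irreducibles). If $C_0 \subset \Gal(K(E[l])/K)$ denotes the Frobenius classes defining $\mathcal{P}_k$, then $C = C_0 \times \{a\}$ and $|C|/|G| = (|C_0|/|\Gal(K(E[l])/K)|)\, l^{-w}$. Applying Theorem \ref{thm:chebotarev} both to $C$ and to $C_0$ (the latter to count $\#\mathcal{P}_k(i)$) and dividing yields main term $l^{-w}$, which is the intended $p^{-w}$. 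The error is of order $(|G| + g_L)\, q^{-i/2}/i$ after normalizing.

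To bound $g_L$ we use Riemann--Hurwitz: $L/K(E[l])$ is a Kummer extension of degree at most $l^w$, ramified only above the $h_k$ and $\infty$. This gives $2g_L - 2 \le l^w\big(2g_{E[l]} - 2 + \sum_k \deg h_k + O(1)\big)$. In the linear-disjointness argument for $l \ge 5$, we use that $\Gal(K(E[l])/K) \supset \mathrm{SL}_2(\mathbb{F}_l)$ has no nontrivial abelian $l$-quotient. Collecting the constants should produce $6(2g_{E[l]} + 2l^3 - l - 2)\, l^w$, where $|\mathrm{GL}_2(\mathbb{F}_l)| \le l^4$ and the degree contribution $\sum_k \deg h_k \le n$ is absorbed using $i$ comparable to or larger than the relevant scale.

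\textbf{Case (2).} If the intersection is nontrivial, it is a degree-$l$ subextension, since the maximal abelian $l$-quotient of the Galois group is cyclic for $l \le 3$. Then $G$ is a fibre product of index $l$ in the direct product. Consequently, for a fixed Frobenius class defining $\mathcal{P}_k$, only $l^{w-1}$ values of $a$ are compatible, giving densities $l^{-(w-1)}$ and zero for the remaining $l^w - l^{w-1}$ values.

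\textbf{Refined bound for $i > \mathfrak{n}$.} We substitute $q \ge 2$, $i > 4(\log n)^2$ and $w \le \rho\log n$. Then $q^{-i/2} \le e^{-2(\log 2)(\log n)^2}$, which is at most $n^{-2\log n}$ up to the constant, and $l^w \le n^{\rho \log l}$.

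The main obstacle is the genus bookkeeping: making the Riemann--Hurwitz bound on $g_L$ small relative to $q^{i/2}$ uniformly in $w$ and $\sum_k \deg h_k \le n$. This requires checking that the dependence on $n$ coming from $\deg h_k$ is dominated when $i$ is large, and for small $i$ it must be accepted inside the stated constant. I expect this is exactly where the cited Corollary 4.17 of \cite{Pa22} does the careful accounting.
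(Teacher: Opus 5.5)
The paper gives no argument here beyond citing \cite[Corollary 4.17]{Pa22}, and your reduction to Theorem~\ref{thm:chebotarev} for $L=K(E[l],\sqrt[l]{h_1},\dots,\sqrt[l]{h_w})$ is the natural route. The genuine gap is the genus step you defer: it cannot be closed the way you hope.

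\textbf{The genus step.} For $h_k\nmid\Delta_E$, the place $h_k$ is unramified in $K(E[l])$ and has ramification index $l$ in $L$. Riemann--Hurwitz then gives $g_L=\frac{|G|}{2}\bigl(1-\frac1l\bigr)\sum_k\deg h_k+O\bigl(|G|(1+\deg\Delta_E)\bigr)$. Divide your two Chebotarev estimates and write $G_E=\Gal(K(E[l])/K)$. The error is about $6l^{-w}(|G|+g_L)q^{-i/2}\asymp|G_E|\bigl(1+\deg\Delta_E+\sum_k\deg h_k\bigr)q^{-i/2}$. The factor $l^w$ in fact cancels, but a factor as large as $n$ survives, and it cannot be ``accepted inside'' a constant independent of $n$.

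No other argument can do this either, because the first bound of case (1) is false uniformly in $\deg h_k$. Take $w=1$, $i=1$, and $h_1$ monic irreducible with $h_1\equiv 1\pmod{t^q-t}$; such $h_1$ exist in every large degree. By $l$-th power reciprocity, the symbol of $t-c$ modulo $h_1$ is $h_1(c)^{(q-1)/l}=1$, up to a sign depending only on degrees. So it is constant in $c$: one proportion equals $1$, while the right-hand side tends to $0$ as $q\to\infty$.

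\textbf{What your argument does give.} It proves the first bound with an extra factor $O_{E,l}(1+\sum_k\deg h_k)$, once $q^{i/2}\gg_{E,l}1$ so that $\#\mathcal{P}_k(i)$ is bounded below. That suffices for the $i>\mathfrak{n}$ bound, which is the only one used later, in Proposition~\ref{prop:reinterpret-m0}.

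Your passage to it is wrong as written, though. $e^{-2(\log 2)(\log n)^2}=n^{-(2\log 2)\log n}$ is not $O(n^{-2\log n})$, since $2\log 2<2$. Instead, use that the characteristic is prime to $6$. Then $q\ge 5$ and $q^{-i/2}<n^{-(2\log 5)\log n}$. The surplus $n^{-(2\log 5-2)\log n}$ absorbs the factor $1+n$ and all constants, and then $l^w\le n^{\rho\log l}$ as you say.

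\textbf{Case (2).} Your compatibility count is per conjugacy class, but $C_0$ is a union of classes, and these must all have the same image in $G_E^{\mathrm{ab}}\cong\mathbb{Z}/l$.
\begin{itemize}
\item For $l=2$ this holds: $G_E\cong S_3$, the identity and $3$-cycles are even, and the transpositions are odd.
\item For $l=3$ it fails. $\mathcal{P}_1$ corresponds to the eight elements of order $3$ in $\mathrm{SL}_2(\mathbb{F}_3)$. These form two classes with distinct nonzero images in $\mathbb{Z}/3$. So $2\cdot 3^{w-1}$ values of $a$ occur, each with density $\frac12\cdot 3^{-(w-1)}$.
\end{itemize}
This is harmless in the paper's application. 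When the $h_k$ are prime to $\Delta_E$, every nontrivial subfield $K(\sqrt[l]{\prod_k h_k^{e_k}})$ is ramified at some $h_k$, while $K(E[l])$ is not. So case (2) never occurs there.
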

\begin{proof}
    The statements provided above are immediate corollaries of \cite[Corollary 4.17]{Pa22}. The explicit constant for the error term when $i > \mathfrak{n}$ is obtained in \cite[p.3294]{Pa22} below equation (57).
\end{proof}
The second application regards effective Chebotarev density theorem for determining $t_\chi(\mathfrak{v})$ for a given $\chi \in \Omega_\sigma$.
\begin{proposition}[Proposition 5.4 of \cite{Pa22}] \label{prop:cheb2}
    Let $E$ be an elliptic curve which satisfies Condition \ref{condition}. Fix a square-free product of places $\sigma$ coprime to $\Sigma_E$, and a local character $\chi \in \Omega_\sigma$. Let $d_{i,j}$ be quantities defined as follows.
    \begin{equation}
        d_{i,j} = \begin{cases}
        1 &\text{ if } (i,j) = (0,0), \\
        1 - l^{-\mathrm{rk}(\chi)} &\text{ if } (i,j) = (1,-1), \\
        l^{-\mathrm{rk}(\chi)} &\text{ if } (i,j) = (1,1), \\
        1 - (l+1)l^{-\mathrm{rk}(\chi)} + l^{1 - 2\mathrm{rk}(\chi)} &\text{ if } (i,j) = (2,-2), \\
        (l+1)(l^{-\mathrm{rk}(\chi)} - l^{-2\mathrm{rk}(\chi)}) &\text{ if } (i,j) = (2,0), \\
        l^{-2 \mathrm{rk}(\chi)} &\text{ if } (i,j) = (2,2), \\
        0 &\text{ otherwise}.
        \end{cases}
    \end{equation}
    Let $D_{E,l}$ be the constant from Corollary \ref{cor:Selmer_vary}. Then
    \begin{equation}
        \Big| \frac{\#\{\mathfrak{v} \in \mathcal{P}_i(d) : \mathfrak{v} \not\in \Sigma_E(\sigma), t_\chi(\mathfrak{v}) = j\}}{\#\{\mathfrak{v} \in \mathcal{P}_i(d) : \mathfrak{v} \not\in \Sigma_E(\sigma)\}} - d_{i,j} \Big| < 16 \cdot D_{E,l} \cdot l^{4 + 3 \# \Sigma_E(\sigma)} \cdot q^{-d/2}
    \end{equation}
    for every $d > (12 \log l + 2 \log D_{E,l} + (6 \log l) \cdot \# \Sigma_E) / \log q$. In particular, if $q > e^{12 \log l + 2 \log D_{E,l} + (6 \log l) \cdot \# \Sigma_E}$, then the above equation holds for every $d \geq 1$.
\end{proposition}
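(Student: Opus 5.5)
The plan is to realise $t_\chi(\mathfrak{v})$ as a function of the Frobenius of $\mathfrak{v}$ in an explicit finite Galois extension $L/K$, compute the exact proportion of Frobenius elements giving each value, and then transfer this to a count of places using Theorem \ref{thm:chebotarev}. Put $S := \mathrm{Sel}(E[l],\chi)$ and $r := \mathrm{rk}(\chi)$. Every class $c \in S$ restricts to a homomorphism $\phi_c : \mathrm{Gal}(\overline{K}/K(E[l])) \to E[l]$. Let $L$ be the compositum of $K(E[l])$ with the fixed fields of all the $\ker \phi_c$.

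\textbf{Step 1 (structure of $\mathrm{Gal}(L/K)$).} By Condition \ref{condition}(4), $\mathrm{Gal}(K(E[l])/K) \supset \mathrm{SL}_2(\mathbb{F}_l)$, so $H^1(\mathrm{Gal}(K(E[l])/K), E[l]) = 0$ at least for $l \geq 5$. Hence restriction $S \hookrightarrow \mathrm{Hom}(\mathrm{Gal}(\overline{K}/K(E[l])), E[l])$ is injective. Combining this with irreducibility of $E[l]$, I would show
\begin{equation*}
\mathrm{Gal}(L/K) \cong \mathrm{Hom}(S,E[l]) \rtimes \mathrm{Gal}(K(E[l])/K),
\end{equation*}
so that $|\mathrm{Gal}(L/K)| = l^{2r}\cdot|\mathrm{Gal}(K(E[l])/K)|$. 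For $l \leq 3$, where $H^1$ of $\mathrm{SL}_2(\mathbb{F}_l)$ need not vanish, this would be handled by a short separate check, or by bounding the image of restriction directly.

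\textbf{Step 2 (the counting).} Let $\mathfrak{v} \in \mathcal{P}_i$ lie outside $\Sigma_E(\sigma)$, and write its Frobenius as $(\psi,g)$. Then $H^1(\mathcal{O}_{K_{\mathfrak{v}}},E[l]) \cong E[l]/(g-1)E[l]$, which has dimension $i$. The localisation of $c$ is $\psi(c)$ modulo $(g-1)E[l]$. Hence $t_\chi(\mathfrak{v})$ equals the rank of the composite $S \xrightarrow{\psi} E[l] \to E[l]/(g-1)E[l]$.

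For fixed $g$, as $\psi$ ranges uniformly over $\mathrm{Hom}(S,E[l])$, this composite is a uniform random $r \times i$ matrix over $\mathbb{F}_l$. The rank distribution is then elementary.
\begin{itemize}
\item For $i=0$, the rank is $0$.
\item For $i=1$, the rank is $0$ with probability $l^{-r}$.
\item For $i=2$, the rank is $0$ with probability $l^{-2r}$, is $2$ with probability $(1-l^{-r})(1-l^{1-r}) = 1-(l+1)l^{-r}+l^{1-2r}$, and is $1$ with the remaining probability $(l+1)(l^{-r}-l^{-2r})$.
\end{itemize}
These reproduce the $d_{i,j}$, once one matches the labels $j$ with the corresponding change in rank from Proposition \ref{prop:Selmer_vary}. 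Consequently the set of $(\psi,g)$ with $g$ of fixed-space dimension $i$ and $t_\chi(\mathfrak{v}) = j$ is a union of conjugacy classes $C_{i,j}$, and $|C_{i,j}|/|C_i| = d_{i,j}$.

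\textbf{Step 3 (effective estimate).} Apply Theorem \ref{thm:chebotarev} twice: once to $C_{i,j}$ in $L$, and once to $C_i$ in $K(E[l])$ for the denominator $\#\mathcal{P}_i(d)$. Then take the ratio; excluding places in $\Sigma_E(\sigma)$ costs only $O(\#\Sigma_E(\sigma))$. We have $r \leq D_{E,l} + 2\#\Sigma_E(\sigma)$ by Corollary \ref{cor:Selmer_vary}, and $L/K(E[l])$ is an elementary abelian $l$-extension ramified only in $\Sigma_E(\sigma)$. Riemann--Hurwitz then gives $g_L \ll D_{E,l}\, l^{4+3\#\Sigma_E(\sigma)}$, with the $l^{3\#\Sigma}$ coming from $l^{2r}$ together with the degree of the ramified part. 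The hypothesis on $d$ is exactly what makes the denominator's main term dominate its error, so the ratio error stays below $16\, D_{E,l}\, l^{4+3\#\Sigma_E(\sigma)}\, q^{-d/2}$.

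I expect the main obstacle to be Step 1 together with the genus bookkeeping in Step 3. The disjointness of the Selmer-class extensions from $K(E[l])$ needs care when $l \leq 3$. The explicit genus bound must also be tracked carefully enough to yield the stated constant; I would first try a conductor-discriminant bound, counting each ramified place's contribution to the different as at most $(|G|)(\deg v)$ times a tame factor.
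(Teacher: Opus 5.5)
Your Steps 1 and 2 are the standard Klagsbrun--Mazur--Rubin argument behind the result cited from \cite{Pa22}, and they are sound. The Frobenius of $\mathfrak{v}$ in $L$ determines $t_\chi(\mathfrak{v})$ as the rank of a uniformly distributed linear map $\mathbb{F}_l^{r}\to\mathbb{F}_l^{i}$. This yields the $d_{i,j}$ once $j$ is read as the rank change of Proposition \ref{prop:Selmer_vary}, which is the right reading since $t_\chi$ is never negative. Your hesitation about $l\le 3$ is unnecessary. For $l=3$ the central element $-I$ acts by $-1$, so $H^1(\Gal(K(E[3])/K),E[3])=0$ by Sah's lemma. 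For $l=2$, inflation--restriction through the normal subgroup of order $3$ in $S_3$, which has no nonzero fixed vectors on $E[2]$, gives the same vanishing.

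The gap is in Step 3: Riemann--Hurwitz does not give $g_L\ll D_{E,l}\,l^{4+3\#\Sigma_E(\sigma)}$.

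\begin{itemize}
\item The extension $L/K$ is tame of degree $N=l^{2r}\cdot\#\Gal(K(E[l])/K)$. Its different therefore has degree $\sum_v N(1-e_v^{-1})\deg v$, summed over the ramified $v\in\Sigma_E(\sigma)$.
\item Hence $g_L$ is of size $N\sum_{v\in\Sigma_E(\sigma)}\deg v$. It grows linearly with $\deg\sigma$, which is not bounded in terms of $\#\Sigma_E(\sigma)$.
\item This is not slack in the bookkeeping. At $v\mid\sigma$ with $v\in\mathcal{P}_1\cup\mathcal{P}_2$, the twisted local condition meets the unramified subspace trivially. So every Selmer class with nonzero localisation at $v$ is ramified there, and such $v$ may have arbitrarily large degree.
\item Consequently Theorem \ref{thm:chebotarev} applied in $L$ gives a relative error of order $N\bigl(\sum_{v\in\Sigma_E(\sigma)}\deg v\bigr)q^{-d/2}$. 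This is vacuous once $q^{d/2}$ falls below $\deg\sigma$.
\end{itemize}

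There is a second, separate mismatch. Your bound $r\le D_{E,l}+2\#\Sigma_E(\sigma)$ lets $l^{2r}$ reach $l^{2D_{E,l}+4\omega(\sigma)}$, and for large $\omega(\sigma)$ this exceeds the stated $D_{E,l}\,l^{4+3\#\Sigma_E(\sigma)}$. The paper instead uses the normalisation $l^{\max_{\chi\in\Omega_E}\mathrm{rk}(\chi)}<D_{E,l}$.

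So Step 3 does not establish the inequality with a constant depending only on $\#\Sigma_E(\sigma)$. What your argument actually proves carries a factor $l^{2r}$ and a factor $g_{K(E[l])}+\sum_{v\in\Sigma_E(\sigma)}\deg v$. That is harmless in the range used later ($d>\mathfrak{n}$, $\deg\sigma\le n$), but it is not the stated uniform bound. The paper does not redo this computation; it imports the constant from \cite[p.~3299]{Pa22}.
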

\begin{proof}
    The explicit constant for the error term is obtained in \cite[p.3299]{Pa22} below equation (71). By the proof of Corollary \ref{cor:Selmer_vary}, we have the upper bound $l^{\max_{\chi \in \Omega_E} \mathrm{rk}(\chi)} < D_{E,l}$.
\end{proof}

Combining the previous two applications of Chebotarev density theorem, the variations on dimensions of Selmer groups with respect to increment in number of irreducible factors dividing the conductor of the order $l$ cyclic character $\chi_f$ can be modeled by Markov operators. We recall the Lagrangian mod-$l$ Markov operator constructed from \cite{KMR14}, which governs the variations of dimensions of $1-\sigma_{l,f}$ Selmer groups of $A_f$ over $K$. We borrow the notations used in Section 6 of \cite{Pa22}.
\begin{definition}[Definition 6.1 of \cite{Pa22}] \label{defn:Markov}
    Let $M_L := [l_{r,s}]$ be the operator over the state space of non-negative integers $\mathbb{Z}_{\geq 0}$ defined as
    \begin{equation*}
        l_{r,s} := \begin{cases}
            1 - l^{-r} &\text{ if } s = r-1 \geq 0 \\
            l^{-r} &\text{ if } s = r+1 \\
            0 &\text{ else}.
        \end{cases}
    \end{equation*}

    We denote by $M$ the operator given by
    \begin{equation*}
        M := \left(1 - \frac{l}{l^2 - 1} \right) \cdot I + \frac{1}{l} \cdot M_L + \frac{1}{l^3 - l} \cdot M_L^2
    \end{equation*}
    where $I$ is the identity operator over the countable state space $\mathbb{Z}_{\geq 0}$.
\end{definition}

We define the following probability distribution over $\mathbb{Z}_{\geq 0}$.
\begin{definition} \label{defn:initial-prob}
    Let $h$ be any square-free polynomial. (With abuse of notation, we may regard $h$ as a square-free product of places of $K$). We denote by $\delta_{h}: \mathbb{Z}_{\geq 0} \to [0,1]$ the probability distribution defined as
    \begin{equation*}
        \delta_{h}(J) := \frac{\# \{\omega \in \Omega_h \; | \; \text{dim}_{\mathbb{F}_l} \Sel(E[l],\omega) = J \}}{\# \Omega_h}.
    \end{equation*}
    We also denote by $\delta_1: \mathbb{Z}_{\geq 0} \to [0,1]$ the probability distribution defined as
    \begin{equation*}
        \delta_1(J) := \frac{\# \{\omega \in \Omega_1 \; | \; \text{dim}_{\mathbb{F}_l} \Sel(E[l],\omega) = J \}}{\# \Omega_1}.
    \end{equation*}
\end{definition}

\begin{remark} \label{rem:initial-prob}
    The distribution $\delta_1$ is equal to the probability distribution of dimensions of Selmer groups of the subset of cyclic prime twist family of $E$ twisted by irreducible polynomials in $\mathcal{P}_0$ of arbitrarily large degree. More concretely, we have
    \begin{equation}
        \delta_1(J) = \lim_{n \to \infty} \frac{\#\{f \in \mathcal{P}_0(n) : \dim_{\mathbb{F}_l} \mathrm{Sel}_{1-\sigma_{l,f}}(A_f/K) = J\}}{\# \mathcal{P}_0(n)}.
    \end{equation}
    The equality follows from using the fact that twisting by an irreducible polynomial $f$ in $\mathcal{P}_0$ only alters the images of the local Kummer maps at places $v \mid \Delta_E$, and that the uniform distribution over the set of order $l$ cyclic global characters $\chi_f \in \mathrm{Hom}(\mathrm{Gal}(\overline{K}/K),\mu_l)$ ramified only at an irreducible polynomial $f \in \mathcal{P}_0(n)$ restricts to a uniform distribution (up to some power-saving error terms determined by effective Chebotarev density theorem, i.e. Theorem \ref{thm:chebotarev}) of Cartesian product of unramified order $l$ local characters over places $v \mid \Delta_E$.
\end{remark}

Combining two applications of Chebotarev density theorem, we obtain the following proposition which relates the variations of local Selmer structures and Lagrangian mod-$l$ Markov operators.
\begin{proposition}[c.f. Theorem 9.5 of \cite{KMR14}] \label{prop:cheb3}
    Let $E$ be an elliptic curve satisfying Condition \ref{condition}. Suppose $q$ satisfies $$q > \exp(12 \log l + 2 \log D_{E,l} + (6 \log l) \cdot \# \Sigma_E),$$
    where $D_{E,l}$ is an explicitly computable constant stated in Corollary \ref{cor:Selmer_vary}. Then for any $J \geq 0$, any fixed polynomial $h$, and any $i \geq 1$ such that $\mathcal{P}_k(i) > (\# \Sigma_E + \#\{g \in \mathcal{P}_k : g \mid h\})^A$ for some constant $A \geq 2$, we have
    \begin{equation}
        \Biggl| \frac{\sum_{\chi \in \Omega_h} \sum_{\{\mathfrak{v} \in \mathcal{P}_k(i) : \mathfrak{v} \not\in \Sigma_E(h)\}} \# \{\chi' \in \Omega_{\chi,\mathfrak{v}} : \mathrm{rk}(\chi') = J \}}{l(l-1) \cdot \# \Omega_h \cdot \#\{\mathfrak{v} \in \mathcal{P}_k(i) : \mathfrak{v} \not\in \Sigma_E(h)\}} - (M_L^k \delta_h)(J) \Biggr| < 16 \cdot D_{E,l} \cdot l^{4 + 3 \# \Sigma_E(h)} \cdot q^{-i/2}.
    \end{equation}
\end{proposition}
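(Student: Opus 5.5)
The plan is to prove the estimate one character at a time, using Proposition \ref{prop:Selmer_vary} and Proposition \ref{prop:cheb2}, and then to average over $\chi \in \Omega_h$. Fix $\chi \in \Omega_h$ and put $r = \mathrm{rk}(\chi)$. For a place $\mathfrak{v} \in \mathcal{P}_k(i)$ with $\mathfrak{v} \notin \Sigma_E(h)$, Proposition \ref{prop:Selmer_vary} says that the multiset $\{\mathrm{rk}(\chi') : \chi' \in \Omega_{\chi,\mathfrak{v}}\}$ depends only on $k$ and $t_\chi(\mathfrak{v})$. I would therefore write
\[
\frac{1}{l(l-1)}\#\{\chi' \in \Omega_{\chi,\mathfrak{v}} : \mathrm{rk}(\chi') = J\}
\]
as an explicit function $c_k(t_\chi(\mathfrak{v}), r, J)$ and treat the three values of $k$ separately.
\begin{itemize}
\item For $k=0$ the rank never changes, so $c_0 = \mathbf{1}_{J=r}$.
\item For $k=1$ we get $c_1 = \mathbf{1}_{J=r+1}$ if $t=0$ and $c_1 = \mathbf{1}_{J=r-1}$ if $t=1$.
\item For $k=2$: if $t=2$ every $\chi'$ has rank $r-2$; if $t=1$ every $\chi'$ has rank $r$; if $t=0$, exactly $l-1$ of the $l(l-1)$ twists have rank $r+2$ and the rest have rank $r$. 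So $c_2 = \tfrac1l\mathbf{1}_{J=r+2} + (1-\tfrac1l)\mathbf{1}_{J=r}$ when $t=0$.
\end{itemize}

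Next I average over $\mathfrak{v}$, replacing the proportion of places with $t_\chi(\mathfrak{v}) = j$ by $d_{k,j}$ from Proposition \ref{prop:cheb2}. The hypothesis on $q$ is exactly the one that makes Proposition \ref{prop:cheb2} valid for every degree $i \ge 1$. The resulting main term is $\sum_j d_{k,j}\, c_k(j,r,J)$. I would then check by direct computation that this equals the $(r,J)$ transition probability of $M_L^k$.
\begin{itemize}
\item For $k=0,1$ this is immediate from the definition of $M_L$, since $d_{1,-1} = 1-l^{-r}$ and $d_{1,1} = l^{-r}$.
\item For $k=2$, expand $M_L^2$. The transition $r \to r+2$ has probability $l^{-r}l^{-r-1} = l^{-2r}\cdot\tfrac1l$, which matches the $t=0$ contribution. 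The transition $r \to r-2$ has probability $(1-l^{-r})(1-l^{1-r}) = 1-(l+1)l^{-r}+l^{1-2r} = d_{2,-2}$. The transition $r \to r$ then follows from both rows summing to one.
\end{itemize}

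The error for a single $\chi$ is at most the sum of the Chebotarev errors over the at most three relevant values of $j$, each weighted by $c_k \le 1$. Averaging over $\chi \in \Omega_h$ and grouping characters by rank gives $\sum_r \delta_h(r)\,(M_L^k)_{r,J} = (M_L^k\delta_h)(J)$, with the same uniform error. The step I expect to be the main obstacle is the bookkeeping of constants. Proposition \ref{prop:cheb2} bounds each $j$ separately, so summing over $j$ costs a factor of $3$, and this has to be absorbed into the stated constant $16\, D_{E,l}\, l^{4+3\#\Sigma_E(h)}$. I would first try to get around this by writing the $k=2$ main term through the complementary count (one minus the other two cases), or by noting that at most two of the errors actually enter with nonzero weight for a given $J$. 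The hypothesis $\#\mathcal{P}_k(i) > (\#\Sigma_E + \#\{g \in \mathcal{P}_k : g \mid h\})^A$ is used only to guarantee that the set of admissible places $\mathfrak{v}$ is nonempty and of comparable size to $\mathcal{P}_k(i)$, so that the normalized ratios in Proposition \ref{prop:cheb2} apply verbatim.
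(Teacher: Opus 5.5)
Your proposal is correct and follows the paper's proof. The paper fixes $\chi\in\Omega_h$ and applies Proposition \ref{prop:Selmer_vary} together with Proposition \ref{prop:cheb2} to compare against $M_L^k\delta_\chi$, where $\delta_\chi$ is the point mass at $\mathrm{rk}(\chi)$. It uses the hypothesis on $\#\mathcal{P}_k(i)$ only to keep Chebotarev applicable after removing the divisors of $h$, and then averages via $\delta_h=\frac{1}{\#\Omega_h}\sum_{\chi}\delta_\chi$. Your explicit check of the $M_L$ and $M_L^2$ transition probabilities against the $d_{i,j}$ is more detail than the paper gives, and the paper likewise carries the constant of Proposition \ref{prop:cheb2} over unchanged, without addressing the bounded factor (at most $1+\frac1l$) lost from combining errors over several values of $t_\chi(\mathfrak{v})$ that you flag.
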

\begin{proof}
    The idea of the proof is analogous to the proof of \cite[Theorem 9.5]{KMR14} and \cite[Proposition 5.13]{Pa22}. However, the statement of the proposition assumes that the degrees $i$ in $\mathcal{P}_k(i)$ (using irreducible polynomials of such degree with which one twists the elliptic curve) is greater than $\lfloor 4 (\log n + \log \log q)^2/\log q \rfloor$. For computing the explicit rate of convergence, we need effective error terms for \cite[Proposition 5.13]{Pa22} that are applicable for both $i \leq \mathfrak{n}$ and $i > \mathfrak{n}$. This is the reason why we provide the proof of the proposition above, instead of directly citing \cite[Proposition 5.13]{Pa22}.

    Given a fixed $\chi \in \Omega_h$, we have $\# \Omega_{\chi, \mathfrak{v}} = l(l-1)$. Denote by $\delta_\chi: \mathbb{Z}_{\geq 0} \to [0,1]$ the probability distribution
    \begin{equation}
        \delta_\chi(J) := \begin{cases}
            1 &\text{ if } J = \mathrm{rk}(\chi), \\
            0 &\text{ otherwise}.
        \end{cases}
    \end{equation}
    Then we use Proposition \ref{prop:Selmer_vary} and Proposition \ref{prop:cheb2} to obtain
    \begin{equation}
        \Biggl| \frac{\sum_{\{\mathfrak{v} \in \mathcal{P}_k(i) : \mathfrak{v} \not\in \Sigma_E(\sigma)\}} \# \{\chi' \in \Omega_{\chi,\mathfrak{v}} : \mathrm{rk}(\chi') = J \}}{l(l-1) \cdot \#\{\mathfrak{v} \in \mathcal{P}_k(i) : \mathfrak{v} \not\in \Sigma_E(\sigma)\}} - (M_L^k \delta_\chi)(J) \Biggr| < 16 \cdot D_{E,l} \cdot l^{4 + 3 \# \Sigma_E(\sigma)} \cdot q^{-i/2}.
    \end{equation}
    We note that the technical condition $\mathcal{P}_k(i) > (\# \Sigma_E + \#\{g \in \mathcal{P}_k : g \mid h\})^A$ is required to make sure that effective Chebotarev density theorem is still applicable over the set $\{\mathfrak{v} \in \mathcal{P}_k(i) : \mathfrak{v} \not\in \Sigma_E(\sigma) \cup \{g \in \mathcal{P}_k : g \mid h\}\}$. Otherwise, one may run into a situation where the square-free polynomial $h$ is divisible by all irreducible polynomials of degree $i$, for which the effective Chebotarev density theorem for $i = k$ is not applicable. (Of course, one can avoid this easily by either taking $q$ to be sufficiently large, or let $i$ to be sufficiently large. This will become a relevant observation in the upcoming section.)
    
    We observe that $\delta_h = \sum_{\chi \in \Omega_h}\frac{1}{\# \Omega_h} \delta_\chi$. Taking summation over $\chi \in \Omega_h$ and dividing both sides of the inequality by $\# \Omega_h$ gives the desired statement of the proposition.
\end{proof}

\section{Proof of the main result}

\subsection{Contributions from designated subsets of square-free polynomials}

The following proposition establishes the upper bound on the contributions of distribution of twist families of Selmer groups over the sets $F_n^{[2]}(\mathbb{F}_q)$ and $F_n^{[3]}(\mathbb{F}_q)$.
\begin{proposition} \label{prop:Fn23-contribution}
    Let $d \geq 1$ be a positive integer. Suppose we have
    $$n > \exp(\exp(\exp(\exp({e + 2d \log l})))).$$
    (It is a stronger condition than that appearing in Theorem \ref{thm:exceptional_set}). Set $R_2 := \lfloor \log n/ \log \log \log n \rfloor$. Then there exists explicitly computable constants $\rho(d,l) > 1$ depending only on $d$ and $l$, and an explicitly computable constant $G_{d,E,l}$ depending only on $d$, $E$, and $l$ such that $R_1 = \rho(d,l) \log n$ and 
    \begin{equation}
        \frac{\sum_{f \in F_n^{[2]}(\mathbb{F}_q) \cup F_n^{[3]}(\mathbb{F}_q)} \# (\mathrm{Sel}_{1-\sigma_{l,f}}(A_f/K))^d}{\# F_n(\mathbb{F}_q)} \leq G_{d,E,l} \cdot \frac{\sqrt{\log n}}{n^{0.9}}.
    \end{equation}
\end{proposition}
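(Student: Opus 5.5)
The plan is to bound the $d$-th moment pointwise by Corollary \ref{cor:Selmer_vary}. We have $\#\mathrm{Sel}_{1-\sigma_{l,f}}(A_f/K) \leq l^{D_{E,l}} \cdot l^{2\omega(f)}$, so
\[
\#(\mathrm{Sel}_{1-\sigma_{l,f}}(A_f/K))^d \leq l^{dD_{E,l}} \cdot l^{2d\,\omega(f)}.
\]
The sum therefore splits into two pieces: a sum over $F_n^{[2]}(\mathbb{F}_q)$, where $\omega(f)\le R_1$, and a sum over $F_n^{[3]}(\mathbb{F}_q)$, which we divide further according to $\omega(f)>R_1$ or $\omega(f)\le R_2$. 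Throughout we use $\#F_n(\mathbb{F}_q)\gg_E q^n$, the implicit constant coming from the coprimality condition with $\Delta_E$.

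\textbf{The set $F_n^{[2]}$.} On this set $l^{2d\omega(f)}\le l^{2d\rho\log n} = n^{2d\rho\log l}$. Theorem \ref{thm:exceptional_set} gives the density bound
\[
\frac{\#F_n^{[2]}(\mathbb{F}_q)}{\#F_n(\mathbb{F}_q)} < 4n^{-(\log\log n)^{1-(\log\log\log n)^{-1/2}}}.
\]
The hypothesis $n>\exp^{\circ 4}(e+2d\log l)$ is designed so that the exponent $(\log\log n)^{1-(\log\log\log n)^{-1/2}}$ exceeds $2d\rho\log l+0.9$. Once $\rho=\rho(d,l)$ is fixed below, this makes the contribution of $F_n^{[2]}$ at most $O(n^{-0.9})$. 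I expect this step to require the most care, because $\rho$ must be chosen before verifying this inequality, and the tower hypothesis must dominate $\rho$ (for example, one can check that $\rho \le$ some explicit multiple of $d\log l$ suffices).

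\textbf{Large $\omega(f)$ in $F_n^{[3]}$.} For $\omega(f)=k>R_1$ we use Theorem \ref{thm:LDP}. Writing $L=\log n+2-\log 2$ and $x=l^{2d}L$, the contribution is
\[
\frac{1}{\#F_n(\mathbb{F}_q)}\sum_{k>R_1}\frac{q^n}{n}\,l^{2dk}\,\frac{L^{k-1}}{(k-1)!} \ll \frac{l^{2d}}{n}\sum_{k-1\ge R_1}\frac{x^{k-1}}{(k-1)!}.
\]
This is a Poisson tail. For $R_1=\rho\log n$ with $\rho$ large compared to $l^{2d}$, the standard bound $\sum_{j\ge R}x^j/j!\le (ex/R)^R$ (valid for $R\ge 2x$) gives at most $(e l^{2d}(1+o(1))/\rho)^{\rho\log n}$. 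We choose $\rho(d,l)$ so that $\rho\log(\rho/(el^{2d}))\ge 1$, for instance $\rho=e^2l^{2d}$. The tail is then $\le n^{-1}$, and the total contribution from this part is $O(n^{-1})$.

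\textbf{Small $\omega(f)$ in $F_n^{[3]}$.} For $\omega(f)=k\le R_2$ the factor satisfies $l^{2dk}\le l^{2dR_2}=n^{2d\log l/\log\log\log n}$, which is $n^{o(1)}$; by the size hypothesis on $n$ it is below $n^{0.05}$. Theorem \ref{thm:LDP} again gives
\[
\frac{1}{n}\sum_{k\le R_2}\frac{L^{k-1}}{(k-1)!}.
\]
Since $R_2\ll L$, the terms increase in $k$, so the sum is at most $R_2\,L^{R_2}/(R_2-1)!$. By Stirling this is $\exp\bigl(R_2\log(eL/R_2)+O(\log R_2)\bigr)=\exp\bigl(O(\log n\cdot\log\log\log\log n/\log\log\log n)\bigr)$, which is $n^{o(1)}$, hence below $n^{0.05}$ for $n$ this large. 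The contribution is therefore at most $n^{-1+0.1}=n^{-0.9}$. The stated factor $\sqrt{\log n}$ can be absorbed here, or it arises if one uses the Stirling bound $1/\sqrt{R_2}$ crudely.

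Summing the three pieces and collecting the constants $l^{dD_{E,l}}$, the density constant for $\#F_n(\mathbb{F}_q)$, and $l^{2d}$ into $G_{d,E,l}$ gives the stated bound.
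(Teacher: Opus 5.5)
Your proposal is correct and is essentially the paper's proof. It uses the pointwise bound from Corollary~\ref{cor:Selmer_vary}, Theorem~\ref{thm:exceptional_set} for $F_n^{[2]}(\mathbb{F}_q)$, and Theorem~\ref{thm:LDP} for the two tails of $F_n^{[3]}(\mathbb{F}_q)$. Your Chernoff-type Poisson tail bound with $\rho=e^2l^{2d}$ replaces the paper's termwise Stirling estimate with $\rho=e^{4+3d\log l}$, and you correctly use $l^{d(D_{E,l}+2\omega(f))}$ where the paper writes $2^{D_{E,l}+2R_1}$.

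There are two small bookkeeping slips. In the $F_n^{[2]}$ step, what you need is that $\log\log\log n>e^{e}l^{2d}$ forces the exponent to be at least $\exp(\tfrac12 e^{e}l^{2d})$, which dwarfs $2d\rho\log l$; it is $\log\rho$, not $\rho$, that is linear in $d\log l$. In the small-$\omega$ step, your ``below $n^{0.05}$'' only holds somewhat beyond the stated threshold when $l=2$, $d=1$; the leftover bounded range of $n$ is absorbed into $G_{d,E,l}$, and the paper's own exponent bookkeeping there is no tighter.
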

\begin{proof}
    By Corollary \ref{cor:Selmer_vary}, we obtain the following upper bound for any $f \in F_n^{[2]}(\mathbb{F}_q)$:
    \begin{equation}
        \dim_{\mathbb{F}_l} \mathrm{Sel}_{1-\sigma_{l,f}}(A_f/K) \leq D_{E,l} + 2 \omega(f) \leq D_{E,l} + 2 R_1.
    \end{equation}
    By Theorem \ref{thm:exceptional_set}, we have
    \begin{align}  \label{eqn:Fn2}
    \begin{split}
        \frac{\sum_{f \in F_n^{[2]}(\mathbb{F}_q)} \# (\mathrm{Sel}_{1-\sigma_{l,f}}(A_f/K))^d}{\# F_n(\mathbb{F}_q)} &< 4 \cdot 2^{D_{E,l} + 2 R_1} \cdot n^{-(\log \log n)^{1 - (\log \log \log n)^{-1/2}}} \\
        &< 4 \cdot 2^{D_{E,l}}\cdot n^{\rho \log 4 - (\log \log n)^{1 - (\log \log \log n)^{-1/2}}} \\
        &< 4 \cdot 2^{D_{E,l}} \cdot n^{\rho \log 4 - e^{1/2 \cdot e^{e + 2d \log l}}}.
    \end{split}
    \end{align}
    The upper bound is non-trivial as long as $R_1$ can be chosen to be at least $\rho \log n$ for some $\rho > 1$ such that $\rho \log 4 - e^{1/2 \cdot e^{e + 2d \log l}} < -1$. We will make a choice of $\rho$ by looking at the contributions of distribution of twist families of Selmer groups over the set $F_n^{[3]}(\mathbb{F}_q)$.

    We consider the following subdivision of the set $F_n^{[3]}(\mathbb{F}_q)$:
    \begin{equation} \label{eqn:Fn3-basic}
        \#F_n^{[3]}(\mathbb{F}_q) = \sum_{k > R_1} \#\{f \in F_n^{[3]}(\mathbb{F}_q): \omega(f) = k\} + \sum_{k \leq R_2} \#\{f \in F_n^{[3]}(\mathbb{F}_q): \omega(f) = k\}.
    \end{equation}
    By Corollary \ref{cor:Selmer_vary}, we have for each $k$,
    \begin{equation} \label{eqn:Fn3}
        \sum_{\substack{f \in F_n(\mathbb{F}_q) \\ \omega(f) = k}} \# (\mathrm{Sel}_{1-\sigma_{l,f}}(A_f/K))^d \leq l^{d(D_{E,l} + 2k)} \cdot \frac{q^n}{n} \frac{(\log n + 2 - \log 2)^{k-1}}{(k-1)!}.    
    \end{equation}
    We first obtain the upper bound for the first term of equation (\ref{eqn:Fn3-basic}). 
    Using the lower bound for Stirling's approximation, we obtain the following upper bound for $k > R_1$:
    \begin{align*}
        (\ref{eqn:Fn3}) &\leq l^{d(D_{E,l} + 2k)} \frac{q^n}{n} \left(1 + \frac{2 - \log 2}{\log n} \right)^{k-1} \cdot \left( \frac{\log n}{k} \right)^{k-1} \cdot e^k \cdot \frac{1}{\sqrt{2 \pi k}} \\
        &\leq l^{d(D_{E,l} + 2k)} \frac{q^n}{n} e^{(k-1)(2 - \log 2)/\log n} \cdot e^{(k-1)(\log \log n - \log k)} e^k \frac{1}{\sqrt{2 \pi k}} \\
        &\leq \frac{q^n}{n \cdot \log n \cdot \sqrt{2 \pi k}} \cdot \exp \left( -k (\log k - \log \log n - 3 - 2d \log l) + \log k + d D_{E,l} \log l \right).
    \end{align*}
    Hence as long as we choose
    \begin{equation*}
        R_1 \geq e^{4 + 3d \log l} \cdot \log(n), \hspace{15pt} (i.e. \rho(d,l) = e^{4 + 3d \log l}),
    \end{equation*}
    we obtain that 
    \begin{equation*}
    (\ref{eqn:Fn3}) \leq \frac{q^n}{n \log n \sqrt{2 \pi k}} \exp \left( -k d \log l + d D_{E,l} \log l \right) = \frac{q^n}{n \log n} \cdot \frac{l^{dD_{E,l}}}{l^{-dk} \sqrt{2 \pi k}}
    \end{equation*}
    for any $k > R_1$. Furthermore, such a choice of $\rho$ ensures that 
    \begin{equation*}
        (\ref{eqn:Fn2}) < 4 \cdot 2^{D_{E,l}} \cdot n^{-1877}.
    \end{equation*}
    We can then take summation over all $R_1 < k \leq n$ to get a uniform upper bound (recall that a polynomial of degree $n$ cannot have more than $n$ many irreducible factors)
    \begin{equation}
        \frac{\sum_{f \in F_n^{[3]}(\mathbb{F}_q), \omega(f) > R_1} \#(\mathrm{Sel}_{1-\sigma_{l,f}}(A_f/K))^d}{\# F_n(\mathbb{F}_q)} \leq \frac{1}{n \log n} \cdot \sum_{R_1 < k \leq n} \frac{l^{d D_{E,l}}}{l^{-dk}} \leq \frac{l^{d D_{E,l}}}{n \log n}.
    \end{equation}
    Next, we look at the second term of equation (\ref{eqn:Fn3-basic}). We obtain the following upper bound for $k \leq R_2$:
    \begin{align*}
        (\ref{eqn:Fn3}) &\leq l^{d (D_{E,l} + 2k)} \cdot \frac{q^n}{n} \left(1 + \frac{2 - \log 2}{\log n} \right)^{k-1} \cdot \left( \frac{\log n}{k} \right)^{k-1} \cdot e^k \cdot \frac{1}{\sqrt{2\pi k}} \\
        &\leq l^{d D_{E,l}} \cdot \frac{q^n}{n \sqrt{2\pi k}} \exp \left( (2-\log 2) \frac{k-1}{\log n} + 2kd \log l + (k-1)(\log \log n - \log k) \right)
    \end{align*}
    We use our choice $R_2 = \lfloor \log n / \log \log \log n \rfloor$ and the supposition $\log \log \log n > e^{e + 2d \log l}$ to obtain (i.e. we use $\log \log \log m_{n,q} > e$ to show that $(k-1)(\log \log n - \log k)$ achieves its maximum value at $k = R_2$ over the range $k \in [1,R_2]$)
    \begin{align*}
        (\ref{eqn:Fn3}) &\leq l^{d D_{E,l}} \cdot e^{5 - 2 \log 2} \cdot \frac{q^n}{n \sqrt{2 \pi k} \log \log \log n} \cdot \exp \left( k (\log \log \log \log n + 2d \log l)  \right). \\
        &\leq l^{d D_{E,l}} \cdot e^{5 - 2 \log 2} \cdot \frac{q^n}{n \sqrt{2 \pi k} \log \log \log n} \cdot e^{2k \log \log \log \log n}.
    \end{align*}
    We use the trivial bound $\frac{\log \log \log \log n}{\log \log \log n} < 0.1$ when $\log \log \log n > e^{e + 2 d \log l} \geq e^{e+1}$ and $k < \log n$ to obtain
    \begin{align*}
        (\ref{eqn:Fn3}) &\leq l^{d D_{E,l}} \cdot e^{5 - 2 \log 2} \cdot \frac{q^n}{n^{0.9} \sqrt{2 \pi k} \log \log \log n}.
    \end{align*}
    We can then take summation over all $1 \leq k < R_2$ to get a uniform upper bound
    \begin{align}
    \begin{split}
        \frac{\sum_{f \in F_n^{[3]}(\mathbb{F}_q), \omega(f) \leq R_2} \#(\mathrm{Sel}_{1-\sigma_{l,f}}(A_f/K))^d}{\# F_n(\mathbb{F}_q)} &\leq \sum_{1 \leq k \leq R_2} l^{d D_{E,l}} \cdot e^{5 - 2 \log 2} \cdot \frac{1}{n^{0.9} \sqrt{2 \pi k} \log \log \log n} \\
        &< \frac{1}{5} l^{d D_{E,l}} \cdot \frac{\sqrt{\log n}}{n^{0.9}}.
    \end{split}
    \end{align}
    Here we use the trivial lower bound $\log \log \log n \geq e^{e+1}$ to get
    \begin{align*}
        \sum_{1 \leq k \leq R_2} \frac{e^{5 - 2 \log 2}}{\sqrt{2 \pi k} \log \log \log n} < 2 \sqrt{\frac{\log n}{\log \log \log n}} \cdot \frac{e^{5-2\log 2}}{\sqrt{2 \pi} \log \log \log n} < \frac{1}{5} \sqrt{\log n}.
    \end{align*}
    This allows us to conclude that the contribution from the subset $F_n^{[3]}(\mathbb{F}_q)$ is of order
    \begin{equation}
        \frac{\sum_{f \in F_n^{[3]}(\mathbb{F}_q)} \#(\mathrm{Sel}_{1-\sigma_{l,f}}(A_f/K))^d}{\# F_n(\mathbb{F}_q)} \leq \frac{6}{5} \frac{l^{d D_{E,l}} \sqrt{\log n}}{n^{0.9}}.
    \end{equation}
    It hence follows that the contribution from the subset $F_n^{[2]}(\mathbb{F}_q)$ is of order smaller than that from the subset $F_n^{[3]}(\mathbb{F}_q)$. We can hence take $G_{d,E,l} = 4D_{E,l} + 8 + \frac{6}{5} l^{d D_{E,l}}$ to prove the proposition.
\end{proof}

Starting from this section and onwards, we will use
\begin{equation}
    R_1 := e^{4 + 3 d \log l} \log(n), \hspace{15pt} R_2 := \lfloor \frac{\log n}{\log \log \log n} \rfloor
\end{equation}
to define the sets $F_n^{main}(\mathbb{F}_q)$ and $F_n^{[1]}$. We turn our focus to obtaining contributions over the sets $F_n^{main}(\mathbb{F}_q)$ and $F_n^{[1]}(\mathbb{F}_q)$. To do so, we define the following subsets of $F_n^{main}(\mathbb{F}_q)$ and $F_n^{[1]}(\mathbb{F}_q)$, which will be used in the upcoming reinterpretation of the main result from \cite{Pa22}.
\begin{definition}
    We define two disjoint subsets of $F_n^{main}(\mathbb{F}_q)$:
    \begin{itemize}
        \item $F_n^{main,0}(\mathbb{F}_q) := \Big\{ f \in F_n^{main}(\mathbb{F}_q): f \text{ has an irred. factor in } \mathcal{P}_0 \Big\}$,
        \item $F_n^{main,1}(\mathbb{F}_q) := \Big\{ f \in F_n^{main}(\mathbb{F}_q): f \text{ has no irred. factors in } \mathcal{P}_0 \Big\}$.
    \end{itemize}
\end{definition}
\begin{definition}
We define the following subsets of polynomials of degree $n$ over $\mathbb{F}_q$ given a fixed choice of an elliptic curve $E$ satisfying Condition \ref{condition}.
\begin{itemize}
    \item We denote by $\mathcal{Q}(n)$ the set of squarefree polynomials of degree at most $n$ whose irreducible factors are all of degree at most $\mathfrak{n}$, and whose number of irreducible factors are at most $R_2$.
    \item Let $h$ be a square-free polynomial in $\mathcal{Q}(n)$, and $n_0 > \mathfrak{n}$ be some positive integer. We denote by $F_n^{main,0,\langle h, n_0 \rangle}(\mathbb{F}_q)$ a subset of $F_n^{main,0}(\mathbb{F}_q)$ consisting of polynomials $f$ such that $f_* = h$, and $n_0$ is the maximal degree of an irreducible factor $g_a$ of $f$ lying in $\mathcal{P}_0$. (We note that such an irreducible factor $g_a$ was called an auxiliary place of $f$, see \cite[Definition 5.8]{Pa22}.)
    \item Let $h$ be a square-free polynomial in $\mathcal{Q}(n)$, and $n_0 > \mathfrak{n}$ be some positive integer. We denote by $F_n^{main,1,\langle h, n_0 \rangle}(\mathbb{F}_q)$ a subset of $F_n^{main,1}(\mathbb{F}_q)$ consisting of polynomials $f$ such that $f_* = h$, and $n_0$ is the maximal degree of an irreducible factor $g_a$ of $f$ lying in $\mathcal{P}_1$.
    \item Let $h$ be a square-free polynomial in $\mathcal{Q}(n)$, and $n_0 > \mathfrak{n}$ be some positive integer. We denote by $F_n^{[1],\langle h, n_0 \rangle}(\mathbb{F}_q)$ a subset of $F_n^{[1]}(\mathbb{F}_q)$ consisting of polynomials $f$ such that $f_* = h$, and $n_0$ is the maximal degree of an irreducible factor $g_a$ of $f$ not dividing $h$. (By definition such $g_a$ lies in $\mathcal{P}_2$).
\end{itemize}
By construction we have
\begin{align*}
    F_n^{main,0}(\mathbb{F}_q) &= \bigsqcup_{h \in \mathcal{Q}(n)} \bigsqcup_{n_0 = \mathfrak{n}+1}^{n - \deg h} F_n^{main,0,\langle h,n_0 \rangle}(\mathbb{F}_q), \\
    F_n^{main,1}(\mathbb{F}_q) &= \bigsqcup_{h \in \mathcal{Q}(n)} \bigsqcup_{n_0 = \mathfrak{n}+1}^{n - \deg h} F_n^{main,1,\langle h,n_0 \rangle}(\mathbb{F}_q), \\
    F_n^{[1]}(\mathbb{F}_q) &= \bigsqcup_{h \in \mathcal{Q}(n)} \bigsqcup_{n_0 = \mathfrak{n}+1}^{n - \deg h} F_n^{[1],\langle h,n_0 \rangle}(\mathbb{F}_q).
\end{align*}
\end{definition}
The three series of propositions allow us to obtain contributions over the sets $F_n^{main,0}(\mathbb{F}_q)$, $F_n^{main,1}(\mathbb{F}_q)$, and $F_n^{[1]}(\mathbb{F}_q)$ using Lagrangian mod-$l$ operators $M$. We first start with contributions over $F_n^{main,0}(\mathbb{F}_q)$.
\begin{proposition} \label{prop:reinterpret-m0}
    Let $E$ be an elliptic curve which satisfies Condition \ref{condition}. Given a fixed $h \in \mathcal{Q}(n)$ and any $f \in F_n^{main,0,\langle h, n_0\rangle}(\mathbb{F}_q)$ for some $n_0 > \mathfrak{n}$, we denote by $\beta(l,f,h,d)$ the random variable 
\begin{equation*}
    \beta_{main,0}(l,f,h,d) := l^{d \cdot \left(M^{\omega(f^*)-1}(\delta_h) \right)}.
\end{equation*}
We denote by $\mathbb{E}[\beta_{main,0}(l,f,h,d) \; | \; F_n^{main,0,\langle h,n_0 \rangle}(\mathbb{F}_q)]$ the $d$-th moments of the random variable
\begin{align*}
    \mathbb{E}[\beta_{main,0}(l,f,h,d) \; | \; F_n^{main,0,\langle h,n_0 \rangle}(\mathbb{F}_q)] &:= \frac{\sum_{f \in F_n^{main,0,\langle h,n_0 \rangle}(\mathbb{F}_q)} \beta_{main,0}(l,f,h,d)}{\# F_n^{main,0,\langle h,n_0 \rangle}(\mathbb{F}_q)}.
\end{align*}
Suppose we have $\log \log \log n > e^{e + 2d \log l}$ (the same condition appearing in Proposition \ref{prop:Fn23-contribution}). Then we have
\begin{align}
\begin{split}
& \hspace{15pt} {\sum_{f \in F_n^{main,0}(\mathbb{F}_q)} \#(\mathrm{Sel}_{1-\sigma_{l,f}}(A_f/K))^d} \\
&= \sum_{h \in \mathcal{Q}(n)} \sum_{n_0 = \mathfrak{n}+1}^{n-\deg h} \# F_n^{main,0,\langle h,n_0\rangle}(\mathbb{F}_q) \cdot \left[ \mathbb{E}[\beta_{main,0}(l,f,h,d) \; | \; F_n^{main,0,\langle h,n_0 \rangle}(\mathbb{F}_q)] + O_{E,l,d} \left( n^{-\log n} \right) \right],
\end{split}
\end{align}
where the implied constant depends only on $E, l,$ and $d$, and is independent of $n$ and $q$.
\end{proposition}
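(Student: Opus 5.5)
The plan is to fix $h \in \mathcal{Q}(n)$ and $n_0$, and to build the set $F_n^{main,0,\langle h,n_0\rangle}(\mathbb{F}_q)$ one irreducible factor at a time. The order will follow \cite[Section 5]{Pa22} and \cite[Section 9]{KMR14}. Write $f = h \cdot g_a \cdot g_1 \cdots g_r$, where $g_a \in \mathcal{P}_0(n_0)$ is the auxiliary factor and $r = \omega(f^*) - 1$. Each $g_j$ has degree $> \mathfrak{n}$ and lies in $\mathcal{P}_0 \cup \mathcal{P}_1 \cup \mathcal{P}_2$, with $\deg g_j \leq n_0$ whenever $g_j \in \mathcal{P}_0$.

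\textbf{Auxiliary factor.} Twisting by the auxiliary place $g_a \in \mathcal{P}_0$ only changes the local conditions at places of $\Sigma_E(h)$; this is the mechanism described in Remark \ref{rem:initial-prob}. Using Proposition \ref{prop:cheb1} (the case $i > \mathfrak{n}$), the restrictions of $\chi_f$ to $\Sigma_E(h)$ become equidistributed in $\Omega_h$ as $g_a$ varies. The error is of size $n^{-2\log n + \rho \log l}$, because $w \leq \#\Sigma_E + R_2 \leq \rho \log n$. Consequently the distribution of $\mathrm{rk}$ at this stage agrees with $\delta_h$ up to $O(n^{-2\log n + \rho\log l})$.

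\textbf{Remaining large factors.} Each further factor $g_j$ has degree $i > \mathfrak{n} = \lfloor 4(\log n)^2\rfloor$. Proposition \ref{prop:Selmer_vary} together with Proposition \ref{prop:cheb3} show that adjoining $g_j$ acts on the rank distribution by $M_L^k$, where $k \in \{0,1,2\}$ according to $g_j \in \mathcal{P}_k$. The error in each step is $16 D_{E,l}\, l^{4+3\#\Sigma_E(h)} q^{-i/2}$. The weights $1 - \frac{l}{l^2-1}$, $\frac1l$, $\frac{1}{l^3-l}$ in $M$ are the Chebotarev proportions of $\mathcal{P}_0$, $\mathcal{P}_1$, $\mathcal{P}_2$ among places, namely the proportions of $\Gal(K(E[l])/K) \supseteq \mathrm{SL}_2(\mathbb{F}_l)$ fixing no nonzero vector, a line, or everything. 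Averaging over factorization types therefore replaces the product of these $M_L^k$ by $M^{r}$, up to Chebotarev errors from Theorem \ref{thm:chebotarev}.

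\textbf{Bounding the errors.} Induction over the $r \leq R_1 = \rho \log n$ steps gives a total variation error of at most $R_1 \cdot 16 D_{E,l}\, l^{4 + 3(\#\Sigma_E + R_2)} q^{-\mathfrak{n}/2}$. Since $q^{-\mathfrak{n}/2} \leq 2^{-2(\log n)^2 + 1}$ and $l^{3R_2} = n^{o(1)}$, this is $O(n^{-\log n})$ with a constant depending only on $E$ and $l$. The quantity being averaged is $l^{d\cdot\mathrm{rk}}$, and the total variation error must be weighted by it. By Corollary \ref{cor:Selmer_vary}, $\mathrm{rk} \leq D_{E,l} + 2R_1$, so the weight is at most $l^{dD_{E,l}} n^{2d\rho\log l}$. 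This polynomial loss is absorbed by the superpolynomial savings: we may still write the result as $O_{E,l,d}(n^{-\log n})$ after slightly shrinking the exponent, using $\log\log\log n > e^{e+2d\log l}$. Finally, summing over $h$ and $n_0$ weighted by $\#F_n^{main,0,\langle h,n_0\rangle}(\mathbb{F}_q)$ gives the stated identity.

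\textbf{Main obstacle.} The hardest step is justifying that the per-factor transition is exactly $M$, with uniform errors, when the factors are conditioned to have prescribed degrees summing to $n - \deg h$. The constraint that $\mathcal{P}_0$-factors have degree $\leq n_0$ must also be respected. I would handle this by conditioning on the multiset of degrees of $f^*$. Within each degree class the $\mathcal{P}_k$-membership is Chebotarev-equidistributed independently of the previously chosen Kummer data (Proposition \ref{prop:cheb1}). Note that $\mathcal{P}_k(i) > (\#\Sigma_E + R_2)^A$ holds trivially because $i > \mathfrak{n}$. The truncation at $n_0$ should contribute an error that is then absorbed into the expectation $\mathbb{E}[\beta_{main,0}]$ itself, since that expectation is taken over exactly the same conditioned set.
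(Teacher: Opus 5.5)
Your architecture matches the paper's: an auxiliary factor $g_a\in\mathcal P_0$ of degree $n_0>\mathfrak n$, Proposition~\ref{prop:cheb1} with $i=n_0$, then iterated Proposition~\ref{prop:cheb3} with errors $q^{-i/2}$, $i>\mathfrak n$. However, the equidistribution step is applied to the wrong set and in the wrong order, so as written the argument does not go through.

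You use $g_a$ only to make $\chi_f$ equidistributed on $\Omega_h$ (with $w\le\#\Sigma_E+R_2$). You then assert that adjoining each $g_j$ acts on the rank distribution by $M_L^k$. But Proposition~\ref{prop:cheb3} is a statement about the uniform measure on $\chi\in\Omega_\sigma$ \emph{and} on the $l(l-1)$ ramified extensions $\chi'\in\Omega_{\chi,\mathfrak v}$. That uniformity is essential: for $\mathfrak v\in\mathcal P_2$ with $t_\chi(\mathfrak v)=0$, Proposition~\ref{prop:Selmer_vary} gives $+2$ only for $l-1$ specific $\chi'$.

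For an actual twist, two sets of data are involved: the component of $\chi_f$ at $g_j$, and the unramified components that $\chi_{g_j}$ contributes at the other places of $\Sigma_E(f)$. Both are governed by $l$-th power residue symbols among the factors of $f$, tied together by reciprocity. Moreover, each later factor changes the components at earlier places again. Equidistribution on $\Omega_h$ controls none of this. Likewise, your claim that twisting by $g_a$ changes local conditions only at $\Sigma_E(h)$ is false once $g_1,\dots,g_r$ are present: it changes the local character at every place of $\Sigma_E(f/g_a)$.

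The missing step is the paper's:
\begin{enumerate}
\item Fix $\sigma=f/g_a=hg_1\cdots g_r$ and let only $g_a$ vary in $\mathcal P_0(n_0)$.
\item Apply Proposition~\ref{prop:cheb1} to all $w=\#\Sigma_E(\sigma)\le\#\Sigma_E+R_1$ places, so that $\chi_f$ is equidistributed on all of $\Omega_{f/g_a}$. This is why the paper's exponent carries $\rho=e^{4+3d\log l}$, coming from $R_1$ rather than $R_2$.
\item Since $g_a\in\mathcal P_0$, its own local condition does not affect the rank, so on that fibre the rank distribution is $\delta_{f/g_a}$.
\item Only then iterate Proposition~\ref{prop:cheb3}, on the model distributions $\delta_\sigma$ as the $g_j$ vary, to reach $M^{r}\delta_h$.
\end{enumerate}

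Two smaller remarks. Weighting the total-variation error by $l^{d\cdot\mathrm{rk}}\le l^{dD_{E,l}}n^{2d\rho\log l}$ via Corollary~\ref{cor:Selmer_vary} is correct, and is a step the paper leaves implicit.

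Your treatment of the $n_0$-truncation, however, does not work. $\beta_{main,0}(l,f,h,d)$ depends on $f$ only through $h$ and $\omega(f^*)$. Averaging it over $F_n^{main,0,\langle h,n_0\rangle}(\mathbb{F}_q)$ therefore cannot register that factors of degree $>n_0$ are forced into $\mathcal P_1\cup\mathcal P_2$, and so evolve by $M_1$ rather than $M$. For $d=1$, each such factor scales the deviation $\mathbb{E}[l^{X}]-(l+1)$ by $\frac{l^3-l+1}{l^4}$ instead of $1-\frac{l^2-l+1}{l^3}$. That discrepancy is far larger than $n^{-\log n}$. The paper's proof does not treat this constraint separately either (it simply applies Proposition~\ref{prop:cheb3} $k-\omega(h)$ times), so there is no argument there for you to borrow at that point.
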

\begin{proof}
The idea of the proof follows through \cite[Proposition 5.13]{Pa22}. However, similar to the proof of Proposition \ref{prop:cheb3}, we directly provide the proof of the theorem above, instead of directly citing it.

By construction, any polynomial in $F_n^{main}(\mathbb{F}_q) \cup F_n^{[1]}(\mathbb{F}_q)$ has an irreducible factor of degree greater than $\mathfrak{n}$. Given a square-free polynomial $h \in \mathcal{Q}(n)$, a fixed $n_0 > \mathfrak{n}$, and a fixed $R_2 < k \leq R_1$, we consider the subset
\begin{equation*}
    \{f \in F_n^{main,0,\langle h, n_0 \rangle}(\mathbb{F}_q) : \omega(f) = k\}.
\end{equation*}
There is a natural projection map
\begin{align*}
    \{f \in F_n^{main,0,\langle h, n_0 \rangle}(\mathbb{F}_q) : \omega(f) = k\} &\to \Omega_{f/g_a}.
\end{align*}
By Proposition \ref{prop:cheb1}, the pushforward of the uniform distribution over the above two subsets with respect to the natural projection map converges to the uniform distribution over the Cartesian product of the set (or a subset in case $l \leq 3$ and $K(\{\sqrt[l]{g} : g \mid f \text{ irreducible}, g \neq g_a\}) \cap K(E[l]) \neq K$) of cyclic order-$l$ characters $\Omega_{f/g_a}$. Both error terms (or rates of convergence) are bounded above by at most 
\begin{equation*}
    6(2 g_{E[l]} + 2l^3 - l - 2) \cdot n^{-2 \log n + e^{4 + 3d \log l} \cdot \log l}
\end{equation*}
because the degree of $g_a$ is greater than $\mathfrak{n}$. (In particular, we use Proposition \ref{prop:cheb1} by setting $i = n_0 > \mathfrak{n}  > \frac{\mathfrak{n}}{\log q}$). Notice that the pushforward of the uniform distribution does not imply uniform distribution over the set of cyclic order-$l$ characters in $\mathrm{Hom}(\mathrm{Gal}(\overline{K}_{g_a}/K_{g_a}), \mu_l)$. Nevertheless, because $g_a \in \mathcal{P}_0$, we may use Proposition \ref{prop:Selmer_vary} to obtain that altering local conditions at the place $g_a$ does not affect the dimension of local Selmer structures. Therefore, using the uniform distribution over the Cartesian product of the set (or subset) of cyclic order-$l$ characters $\Omega_{f/g_a}$, we can use Proposition \ref{prop:cheb3} by $k - \omega(h)$ many times to obtain error terms bounded above by at most
\begin{align*}
    &\leq 6(2 g_{E[l]} + 2l^3 - l - 2) \cdot n^{-2 \log n + e^{4 + 3d \log l} \cdot \log l} + R_1 \cdot 16 \cdot D_{E,l} \cdot l^{4 + 3\# \Sigma_E + R_2} \cdot n^{-2 \log n} \\
    &\leq \left(6(2 g_{E[l]} + 2l^3 - l - 2) + 16 \cdot D_{E,l} \cdot e^{4 + (4 + 3d + 3 \# \Sigma_E) \log l} \right) \cdot n^{-2 \log n + e^{4 + 3d \log l} \cdot \log l + \frac{\log \log n}{\log n} + \frac{\log l}{\log \log \log n}}.
\end{align*}
With the condition $\log \log \log > e^{e + 2d \log l}$, it follows that the above expression can be simplified into
\begin{align*}
    &< \left(6(2 g_{E[l]} + 2l^3 - l - 2) + 16 \cdot D_{E,l} \cdot e^{4 + (4 + 3d + 3 \# \Sigma_E) \log l} \right) \cdot n^{-\log n},
\end{align*}
which proves the desired results.
\end{proof}
Next, we understand contributions over the subset $F_n^{main,1}(\mathbb{F}_q)$.
\begin{proposition} \label{prop:reinterpret-m1}
    Let $E$ be an elliptic curve which satisfies Condition \ref{condition}. Denote by $M_1$ the Markov operator over $\mathbb{Z}_{\geq 0}$ defined as
    \begin{equation}
        M_1 := \frac{l^2-1}{l^2} M_L + \frac{1}{l^2} M_L^2.
    \end{equation}
    Given a fixed $h \in \mathcal{Q}(n)$ and any $f \in F_n^{main,1,\langle h, n_0\rangle}(\mathbb{F}_q)$ for some $n_0 > \mathfrak{n}$, we denote by $\beta_{main,1}(l,f,h,d)$ the random variable 
\begin{equation*}
    \beta_{main,1}(l,f,h,d) := l^{d \cdot \left(M_L \cdot M_1^{\omega(f^*)-1}(\delta_h) \right)}.
\end{equation*}
We denote by $\mathbb{E}[\beta_{main,1}(l,f,h,d) \; | \; F_n^{main,1,\langle h,n_0 \rangle}(\mathbb{F}_q)]$ the $d$-th moments of the random variable
\begin{align*}
    \mathbb{E}[\beta_{main,1}(l,f,h,d) \; | \; F_n^{main,1,\langle h,n_0 \rangle}(\mathbb{F}_q)] &:= \frac{\sum_{f \in F_n^{main,1,\langle h,n_0 \rangle}(\mathbb{F}_q)} \beta_{main,0}(l,f,h,d)}{\# F_n^{main,1,\langle h,n_0 \rangle}(\mathbb{F}_q)}.
\end{align*}
Suppose we have $\log \log \log n > e^{e + 2d \log l}$ (the same condition appearing in Proposition \ref{prop:Fn23-contribution}). Then we have
\begin{align}
\begin{split}
& \hspace{15pt} {\sum_{f \in F_n^{main,1}(\mathbb{F}_q)} \#(\mathrm{Sel}_{1-\sigma_{l,f}}(A_f/K))^d} \\
&= \sum_{h \in \mathcal{Q}(n)} \sum_{n_0 = \mathfrak{n}+1}^{n-\deg h} \# F_n^{main,1,\langle h,n_0\rangle}(\mathbb{F}_q) \cdot \left[ \mathbb{E}[\beta_{main,1}(l,f,h,d) \; | \; F_n^{main,1,\langle h,n_0 \rangle}(\mathbb{F}_q)] + O_{E,l,d} \left( n^{-\log n} \right) \right],
\end{split}
\end{align}
where the implied constant depends only on $E, l,$ and $d$, and is independent of $n$ and $q$.
\end{proposition}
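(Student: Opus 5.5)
I will mirror the proof of Proposition \ref{prop:reinterpret-m0}, adapting it to the fact that $f \in F_n^{main,1}(\mathbb{F}_q)$ has no factor in $\mathcal{P}_0$ but has at least one factor in $\mathcal{P}_1$. The auxiliary place $g_a$ of degree $n_0 > \mathfrak{n}$ now lies in $\mathcal{P}_1$. Every other irreducible factor of $f^*$ lies in $\mathcal{P}_1 \cup \mathcal{P}_2$, because $\mathcal{P}_0$ factors are excluded.

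The first step fixes $h \in \mathcal{Q}(n)$, $n_0$, and $R_2 < k \le R_1$, and considers the slice $\{f \in F_n^{main,1,\langle h,n_0\rangle}(\mathbb{F}_q) : \omega(f)=k\}$. As before, I use the projection to $\Omega_{f/g_a}$ and apply Proposition \ref{prop:cheb1} with $i = n_0 > \mathfrak{n}$. This shows that the pushforward of the uniform measure is uniform on $\Omega_{f/g_a}$, or on the appropriate subset when $l \le 3$, up to an error $6(2g_{E[l]}+2l^3-l-2)\, n^{-2\log n + \rho\log l}$. Here the roles of $h$ and of the remaining large-degree places are exactly as in Proposition \ref{prop:reinterpret-m0}.

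The second step identifies the transition operator at each large place. I condition on the splitting type of each factor of $f^*/g_a$, and add these factors one at a time starting from $\delta_h$. By Proposition \ref{prop:cheb2} and Proposition \ref{prop:Selmer_vary}, a factor in $\mathcal{P}_1$ acts by $M_L$ and a factor in $\mathcal{P}_2$ acts by $M_L^2$ (this is Proposition \ref{prop:cheb3}). Each step carries an error $16 D_{E,l}\, l^{4+3\#\Sigma_E(\cdot)} q^{-i/2}$ with $i>\mathfrak{n}$, hence at most $\ll n^{-2\log n}$ times $l^{O(R_2)}$. By Theorem \ref{thm:chebotarev} applied to $K(E[l])/K$ together with Condition \ref{condition}(4), a place outside $\mathcal{P}_0$ is in $\mathcal{P}_1$ with relative density $(l^2-1)/l^2$ and in $\mathcal{P}_2$ with relative density $1/l^2$. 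I will take these densities from the proportions of elements of $\mathrm{SL}_2(\mathbb{F}_l)$ or $\mathrm{GL}_2(\mathbb{F}_l)$ that have a fixed line; they must be checked against the definition of $M$, and in particular against its coefficients $1/l$ and $1/(l^3-l)$. Averaging over the types of the $\omega(f^*)-1$ non-auxiliary large places therefore produces $M_1^{\omega(f^*)-1}$. The auxiliary place $g_a\in\mathcal{P}_1$ contributes one final $M_L$, which gives $M_L M_1^{\omega(f^*)-1}\delta_h$. The point of singling out $g_a$, as in \cite[Definition 5.8]{Pa22}, is that its local character need not be equidistributed, since it is determined by the others via the global reciprocity constraint. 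I handle this with the same argument as in \cite{KMR14}: the transition at a $\mathcal{P}_1$ place depends only on $t_\chi(g_a)$, whose distribution is governed by Proposition \ref{prop:cheb2} and is independent of the choice of ramified character.

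The final step sums the errors. There are $k-\omega(h) \le R_1 = \rho\log n$ applications, each with error at most $l^{4+3\#\Sigma_E+3R_2}n^{-2\log n}$, plus the Chebotarev error from the first step. Using $R_2 = \lfloor \log n/\log\log\log n\rfloor$ and $\log\log\log n > e^{e+2d\log l}$, the exponent $-2\log n + \rho\log l + o(1)$ is below $-\log n$, exactly as at the end of the proof of Proposition \ref{prop:reinterpret-m0}. Weighting by the moments $l^{dJ}$ is harmless because $J \le D_{E,l}+2R_1$, so $l^{dJ}\le n^{O(1)}$, which is again absorbed. Summing over $k$, $n_0$, and $h$ gives the claimed formula.

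The main obstacle is the second step. I must justify that the type proportions $(l^2-1)/l^2$ versus $1/l^2$ hold uniformly over the large places conditioned on having no $\mathcal{P}_0$ factor, with power-saving error. I must also treat the auxiliary $\mathcal{P}_1$ place correctly, since its character is not free. I would first try to handle both by conditioning on the Frobenius classes in $\mathrm{Gal}(K(E[l])/K)$ and applying Theorem \ref{thm:chebotarev} to the compositum of $K(E[l])$ with the Kummer extensions.
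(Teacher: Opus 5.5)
Your outline is the paper's own. Its proof of this proposition just says to adapt Proposition~\ref{prop:reinterpret-m0}, that at $g_a\in\mathcal{P}_1$ only $t_\chi(g_a)$ matters (Proposition~\ref{prop:Selmer_vary}), that Proposition~\ref{prop:cheb3} is iterated, and that the weights $\frac{l^2-1}{l^2},\frac{1}{l^2}$ come from Chebotarev. Your weights do check out: they are the coefficients $\frac1l$, $\frac{1}{l^3-l}$ of $M$ renormalized, i.e.\ the proportions of non-identity unipotents and of the identity in $\mathrm{SL}_2(\mathbb{F}_l)$, which is the full Galois group since $\mu_l\subset K$.

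The step that fails is ``averaging over the types of the $\omega(f^*)-1$ non-auxiliary large places produces $M_1^{\omega(f^*)-1}$''. On $F_n^{main,1,\langle h,n_0\rangle}(\mathbb{F}_q)$ there are no $\mathcal{P}_0$-factors, and $g_a$ is a $\mathcal{P}_1$-factor of \emph{maximal} degree. So every non-auxiliary large factor of degree $>n_0$ lies in $\mathcal{P}_2$ with conditional probability $1$; the split $\frac{l^2-1}{l^2}:\frac{1}{l^2}$ applies only to factors of degree $\le n_0$. Averaging correctly on the slice gives $M_L M_1^{a(f)} M_L^{2b(f)}\delta_h$, where $a(f)$ and $b(f)$ count the non-auxiliary large factors of degree $\le n_0$ and $>n_0$ respectively.

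This changes the answer by far more than $O(n^{-\log n})$. For $d=1$, write $c_h=\mathbb{E}[\beta(\delta_h,l,1)]-(l+1)$. By (\ref{eq:lemma:convergence1}) and (\ref{eq:lemma:convergence2}), the deviations from $l+1$ are $c_h\, l^{-1}\gamma_{l,1}^{a}\, l^{-2b}$ versus $c_h\, l^{-1}\gamma_{l,1}^{a+b}$. Three facts control the size of the gap:
\begin{itemize}
\item $\gamma_{l,1}=\frac{l^3-l+1}{l^4}>l^{-2}$;
\item $a+b\le R_1\ll\log n$;
\item $|c_h|\gg l^{-2R_2}=n^{-o(1)}$ under the hypotheses of Lemma~\ref{lemma:init-prob}.
\end{itemize}
Hence the two deviations differ by $\gg n^{-C}$ for a fixed $C$ whenever $b\ge1$. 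That happens whenever the largest large factor of $f$ lies in $\mathcal{P}_2$, which is a positive proportion of $F_n^{main,1}(\mathbb{F}_q)$.

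Summing over $h$ and $n_0$ cannot cancel these discrepancies. They all have the sign of $-c_h$, which by Lemma~\ref{lemma:init-prob} does not depend on $h$. Concretely, in the i.i.d.\ model of types with $m=\omega(f^*)$, the true averaged deviation factor is $(\gamma_{l,1}^{m}-l^{-4m})/(1-l^{-2m})$ rather than $l^{-1}\gamma_{l,1}^{m-1}$. These are off asymptotically by the factor $\frac{l^3-l+1}{l^3}$. So an error $O(n^{-\log n})$ is not attainable in the case the paper uses ($d=1$, $\mathbb{E}[\beta(\delta_1,l,1)]\neq l+1$).

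The paper's proof makes exactly the same move, so this is a defect in the statement rather than a departure of yours from the paper. Proposition~\ref{prop:reinterpret-m0} has the analogous problem: factors above the maximal $\mathcal{P}_0$-degree are forced out of $\mathcal{P}_0$ and act by $M_1$, not $M$. What your argument does support, with your error bookkeeping unchanged, is the type-resolved identity. There, $M_L^{\omega_1(f^*)+2\omega_2(f^*)}\delta_h$ replaces $M_L M_1^{\omega(f^*)-1}\delta_h$, where $\omega_i(f^*)$ is the number of large factors of $f$ in $\mathcal{P}_i$. The mixing over types must then be done afterwards, in the analogue of Proposition~\ref{prop:convergence-m1}, using the correct conditional law of types given that $g_a$ is the top-degree $\mathcal{P}_1$-factor.

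One smaller point about your treatment of $g_a$. Proposition~\ref{prop:cheb2} gives the law of $t_\chi(g_a)$ for a \emph{fixed} $\chi$, but here $\chi|_{\Sigma_E(f/g_a)}$ is itself determined by $g_a$. So the Chebotarev argument has to be run in the compositum of the Kummer extensions with the field cut out by $\mathrm{Sel}(E[l],\chi)$ over $K(E[l])$. This needs the usual disjointness check, with the exceptional intersections for $l\le 3$ as in Proposition~\ref{prop:cheb1}. $K(E[l])$ together with the Kummer extensions alone does not see $t_\chi(g_a)$. The paper leaves this implicit too.
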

\begin{proof}
    The proof is a line-by-line adaptation of the proof of Proposition \ref{prop:reinterpret-m0}. The only difference is that our auxiliary place $g_a$ now lies in $\mathcal{P}_1$ instead of $\mathcal{P}_0$. But Proposition \ref{prop:Selmer_vary} implies that $t_\chi(g_a)$ (not a specific choice of the cyclic order-$l$ character unlike the case when $g_a \in \mathcal{P}_2$) determines the variation of dimensions of local Selmer structures. Therefore, using the uniform distribution over $\Omega_{f/g_a}$, we can use Proposition \ref{prop:cheb3} by $k - \omega(h)$ many times, and then use Proposition \ref{prop:cheb3} once over the set of primes $\mathcal{P}_1(n_0)$ to obtain that the dimension of Selmer groups $\mathrm{Sel}_{1-\sigma_{l,f}}(A_f/K)$ as $f$ varies over $F_n^{main,1}(\mathbb{F}_q)$ is governed by the Markov operator
    \begin{equation}
        M_L \cdot \left( \frac{l^2-1}{l^2} M_L + \frac{1}{l^2} M_L^2 \right)^{\omega(f^*)-1} (\delta_h).
    \end{equation}
    Note that the weights $\frac{l^2 - 1}{l^2}$ and $\frac{1}{l^2}$ are obtained from using effective Chebotarev density theorem to compute the conditional probability that a place in $\mathcal{P}_1 \cup \mathcal{P}_2$ lies in $\mathcal{P}_1$ (or respectively $\mathcal{P}_2$). Recall that the subset $F_n^{main,1}(\mathbb{F}_q)$ consists of degree $n$ polynomials with no irreducible factors in $\mathcal{P}_0$. This is the reason why $\beta_{main,1}(l,f,h,d)$ is defined with respect to $M_1$, instead of $M$ as in the previous proposition. The procedure of computing the error term is analogous to the proof of Proposition \ref{prop:reinterpret-m0}.
\end{proof}
Lastly, we understand contributions from the subset $F_n^{[1]}(\mathbb{F}_q)$. As hinted in the proof of the previous proposition, it is not obvious how one can obtain an explicit relation between moments of local Selmer structures over the set $F_n^{[1]}(\mathbb{F}_q)$ and Lagrangian mod-$l$ Markov operators. Nevertheless, it is still possible to obtain upper bound on the moments, which we will show is enough for obtaining the desired rate of convergence.
\begin{proposition} \label{prop:reinterpret-[1]}
    Let $E$ be an elliptic curve which satisfies Condition \ref{condition}. Given a fixed $h \in \mathcal{Q}(n)$ and any $f \in F_n^{[1],\langle h, n_0\rangle}(\mathbb{F}_q)$ for some $n_0 > \mathfrak{n}$, we denote by $\beta_{[1]}(l,f,h,d)$ the random variable 
\begin{equation*}
    \beta_{[1]}(l,f,h,d) := l^{d \cdot \left(M_L^{2(\omega(f^*)-1)}(\delta_h) \right)}.
\end{equation*}
We denote by $\mathbb{E}[\beta_{[1]}(l,f,h,d) \; | \; F_n^{[1],\langle h,n_0 \rangle}(\mathbb{F}_q)]$ the $d$-th moments of the random variable
\begin{align*}
    \mathbb{E}[\beta_{[1]}(l,f,h,d) \; | \; F_n^{[1],\langle h,n_0 \rangle}(\mathbb{F}_q)] &:= \frac{\sum_{f \in F_n^{[1],\langle h,n_0 \rangle}(\mathbb{F}_q)} \beta_{[1]}(l,f,h,d)}{\# F_n^{[1],\langle h,n_0 \rangle}(\mathbb{F}_q)}.
\end{align*}
Suppose we have $\log \log \log n > e^{e + 2d \log l}$ (the same condition appearing in Proposition \ref{prop:Fn23-contribution}). Then we have
\begin{align}
\begin{split}
& \hspace{15pt} {\sum_{f \in F_n^{[1]}(\mathbb{F}_q)} \#(\mathrm{Sel}_{1-\sigma_{l,f}}(A_f/K))^d} \\
&\leq l^{2d} \cdot \sum_{h \in \mathcal{Q}(n)} \sum_{n_0 = \mathfrak{n}+1}^{n-\deg h} \# F_n^{[1],\langle h,n_0\rangle}(\mathbb{F}_q) \cdot \left[ \mathbb{E}[\beta_{[1]}(l,f,h,d) \; | \; F_n^{[1],\langle h,n_0 \rangle}(\mathbb{F}_q)] + O_{E,l,d} \left( n^{-\log n} \right) \right],
\end{split}
\end{align}
where the implied constant depends only on $E, l,$ and $d$, and is independent of $n$ and $q$.
\end{proposition}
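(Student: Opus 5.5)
The plan is to follow the proof of Proposition \ref{prop:reinterpret-m0}, changing only how the auxiliary place $g_a$ is handled. Fix $h \in \mathcal{Q}(n)$, $n_0 > \mathfrak{n}$ and $R_2 < k \leq R_1$, and consider $\{f \in F_n^{[1],\langle h,n_0\rangle}(\mathbb{F}_q) : \omega(f) = k\}$. Here $g_a$ is the irreducible factor of maximal degree $n_0$ not dividing $h$. Every irreducible factor of $f^* = f/h$, including $g_a$, lies in $\mathcal{P}_2$.

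We first use the natural projection onto $\Omega_{f/g_a}$. By Proposition \ref{prop:cheb1}, applied with $i = n_0 > \mathfrak{n}$, the pushforward of the uniform distribution is within $6(2g_{E[l]}+2l^3-l-2)\, n^{-2\log n + \rho\log l}$ of the uniform distribution on $\Omega_{f/g_a}$, or on the relevant coset subset when $l \leq 3$.

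We then build $f/g_a$ from $h$ by adjoining its $k - \omega(h) - 1$ large-degree factors one at a time. At each step we apply Proposition \ref{prop:cheb3} with the $\mathcal{P}_2$-part of the transition. By Proposition \ref{prop:Selmer_vary} and the $d_{2,j}$ of Proposition \ref{prop:cheb2}, the rank changes by $\pm 2$ or $0$ with the probabilities of $M_L^2$. Each step costs at most $16 D_{E,l}\, l^{4+3\#\Sigma_E(h)}\, q^{-i/2}$. Since $i > \mathfrak{n} = \lfloor 4(\log n)^2 \rfloor$, this is at most $n^{-2\log n}$ times a factor polynomial in $l^{R_2}$. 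This produces the distribution $M_L^{2(\omega(f^*)-1)}(\delta_h)$ up to the same accumulated error as in Proposition \ref{prop:reinterpret-m0}. Under $\log\log\log n > e^{e+2d\log l}$ the identical simplification bounds this error by $O_{E,l,d}(n^{-\log n})$.

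The remaining point is the final twist at $g_a$ itself. We cannot equidistribute the local character at $g_a$, because the pushforward does not give uniformity over $\mathrm{Hom}(\mathrm{Gal}(\overline{K}_{g_a}/K_{g_a}),\mu_l)$. For $g_a \in \mathcal{P}_2$, Proposition \ref{prop:Selmer_vary} shows that the rank change depends on the particular $\chi' \in \Omega_{\chi,g_a}$, not only on $t_\chi(g_a)$. This is why we only prove an upper bound. Whatever the character at $g_a$, Proposition \ref{prop:Selmer_vary} gives $\mathrm{rk}(\chi') \leq \mathrm{rk}(\chi) + 2$. Hence $\#\mathrm{Sel}_{1-\sigma_{l,f}}(A_f/K)^d \leq l^{2d}\, l^{d\cdot \mathrm{rk}(\chi_{f/g_a})}$ pointwise.

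Averaging over $f$ in the fibre and applying the previous paragraph to $\chi_{f/g_a}$ gives the expectation $\mathbb{E}[\beta_{[1]}(l,f,h,d)]$ plus the $O(n^{-\log n})$ error. The error term must be weighted by the moments. This is harmless because the moments are bounded by $l^{d(D_{E,l}+2R_1)}$ by Corollary \ref{cor:Selmer_vary}, a polynomial in $n$, which is absorbed into $n^{-\log n}$ after adjusting constants. Summing over $k$, $n_0$ and $h$ yields the stated inequality.

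The step I expect to be the obstacle is this bookkeeping: checking that replacing the unknown transition at $g_a$ by the pointwise $+2$ bound is compatible with the approximation of the uniform distribution on $\Omega_{f/g_a}$. In particular, in the case $l \leq 3$ with a nontrivial intersection with $K(E[l])$, uniformity holds only on a coset subset. There I would argue as in Proposition \ref{prop:reinterpret-m0}, using that the Markov step does not depend on the characters at the earlier places.
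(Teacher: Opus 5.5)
Your proposal is correct and follows the paper's proof essentially verbatim. You project to $\Omega_{f/g_a}$ and equidistribute via Proposition \ref{prop:cheb1}, iterate Proposition \ref{prop:cheb3} with the $\mathcal{P}_2$ transition $M_L^2$, and handle the uncontrolled twist at $g_a \in \mathcal{P}_2$ through the pointwise bound $\mathrm{rk}(\chi') \le \mathrm{rk}(\chi)+2$ from Proposition \ref{prop:Selmer_vary}, which is exactly the source of the factor $l^{2d}$.

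Your extra bookkeeping is consistent with the paper's sketch and more careful than it: you count $\omega(f^*)-1 = k-\omega(h)-1$ steps, and you weight the distributional error by the polynomial-in-$n$ moment bound from Corollary \ref{cor:Selmer_vary}. One small correction: $\#\Sigma_E(\sigma)$ grows to as much as $\#\Sigma_E + R_1$ along the iteration, so the per-step factor is polynomial in $l^{R_1}$, not $l^{R_2}$. This is still absorbed into $n^{-\log n}$.
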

\begin{proof}
    Again, the proof is a line-by-line adaptation of the proof of Proposition \ref{prop:reinterpret-m0}. The only difference is that our auxiliary place $g_a$ lies in $\mathcal{P}_2$ instead of $\mathcal{P}_0$. Proposition \ref{prop:Selmer_vary} implies that the dimension of local Selmer structures can increase by at most $2$ after changing the local conditions at the place $g_a$. Hence, using the uniform distribution over $\Omega_{f/g_a}$, we can use Proposition \ref{prop:cheb3} by $k - \omega(h)$ many times, and then multiply the resulting distribution by $l^{2d}$ to obtain the desired upper bound. The procedure of computing the error term is analogous to the proof of Proposition \ref{prop:reinterpret-m0}. The governing operator over the subset $F_n^{[1]}(\mathbb{F}_q)$ is $M_L^2$, because all irreducible factors of $f \in F_n^{[1]}(\mathbb{F}_q)$ lie in $\mathcal{P}_2$. This is the reason why $\beta_{[1]}(l,f,h,d)$ is defined in terms of $M_L^2$, instead of $M$ or $M_1$ as in previous propositions.
\end{proof}

\subsection{Secondary term for first moments}
\label{section:lowerbound}

The goal of this section is to explicitly compute the secondary term for the first moment of $1-\sigma_{l,f}$ Selmer groups of $A_f$ (i.e. $d = 1$). The key tool we will use to obtain the desired lower bound is to obtain explicit rate of convergence of the Markov operator governing the variations of $1-\sigma_{l,f}$ Selmer groups from Definition \ref{defn:Markov}, which is proved in \cite[Corollary 6.7]{Pa22}.

\begin{theorem}[Corollary 6.7 of \cite{Pa22}] \label{thm:convergence}
    Let $M$ be the Markov operator constructed in Definition \ref{defn:Markov}. Given a probability distribution $\delta: \mathbb{Z}_{\geq 0} \to [0,1]$, denote by $\beta(\delta,l,d)$ the random variable
    \begin{equation*}
        \beta(\delta,l,d) := l^{d \delta}.
    \end{equation*}
    Set $d = 1$. Then there exists an explicitly computable constant $\kappa_d$ such that the following recursive relation holds:
    \begin{equation}
        \mathbb{E}[\beta(M\delta,l,1)] = \mathbb{E}[\beta(\delta,l,1)] \cdot \left( 1 - \frac{l^2 - l + 1}{l^3} \right) + \left( 1 + \frac{1}{l^3} \right).
    \end{equation}
\end{theorem}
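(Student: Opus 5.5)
The plan is to reduce the identity to a computation on point masses and then extend by linearity. Write $\delta_r$ for the point mass at $r \in \mathbb{Z}_{\geq 0}$. Any probability distribution is $\delta = \sum_r \delta(r)\,\delta_r$. The operators $I$, $M_L$, $M_L^2$ and $M$ act linearly on distributions, and $\delta \mapsto \mathbb{E}[\beta(\delta,l,1)] = \sum_z l^z \delta(z)$ is linear with nonnegative terms. So by Tonelli it suffices to show that $\phi(x) := \mathbb{E}[\beta(M\delta_r,l,1)]$, viewed as a function of $x = l^r$, is the affine map $x \mapsto (1 - \frac{l^2-l+1}{l^3})x + (1 + \frac{1}{l^3})$. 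Summing against $\delta(r)$ then gives the claim. The sum $\sum_r \delta(r) = 1$ is what produces the constant term. If $\mathbb{E}[\beta(\delta,l,1)] = \infty$, both sides are $+\infty$, since all coefficients are positive.

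\emph{Step 1: one application of $M_L$.} By Definition \ref{defn:Markov}, from state $r$ the chain moves to $r-1$ with probability $1 - l^{-r}$ and to $r+1$ with probability $l^{-r}$. At $r=0$ the first probability vanishes, so there is no boundary issue. Hence
\begin{equation*}
\mathbb{E}[\beta(M_L\delta_r,l,1)] = (1-l^{-r})\,l^{r-1} + l^{-r}\,l^{r+1} = \frac{x}{l} + \Big(l - \frac{1}{l}\Big) =: L(x).
\end{equation*}
Since this is affine in $x$, linearity shows that $\mathbb{E}[\beta(M_L\delta,l,1)] = L(\mathbb{E}[\beta(\delta,l,1)])$ for every $\delta$.

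\emph{Step 2: two applications of $M_L$.} Iterating Step 1 gives
\begin{equation*}
L(L(x)) = \frac{x}{l^2} + \Big(l - \frac1l\Big)\Big(1 + \frac1l\Big).
\end{equation*}

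\emph{Step 3: combine the three terms of $M$.} Take the weights $a = 1 - \frac{l}{l^2-1} = \frac{l^2-l-1}{l^2-1}$, $\frac1l$ and $\frac{1}{l^3-l}$ from Definition \ref{defn:Markov}. The coefficient of $x$ is
\begin{equation*}
a + \frac{1}{l^2} + \frac{1}{l^2(l^3-l)} = \frac{l^5 - l^4 - l + 1}{l^3(l^2-1)} = \frac{(l-1)(l^2+1)}{l^3} = 1 - \frac{l^2-l+1}{l^3}.
\end{equation*}
The constant term is
\begin{equation*}
\frac{1}{l}\Big(l-\frac1l\Big) + \frac{1}{l^3-l}\Big(l - \frac1l\Big)\Big(1+\frac1l\Big) = 1 - \frac{1}{l^2} + \frac{l+1}{l^3} = 1 + \frac{1}{l^3}.
\end{equation*}

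There is no real obstacle. The only points needing care are the rational-function simplification in Step 3, which is where the factorization $(l-1)(l^4-1)$ appears, and justifying the interchange of the sums over $r$ and $z$ for infinite support. Nonnegativity of all terms handles the latter.
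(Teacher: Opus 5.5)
Your proof is correct and follows essentially the same route as the paper. The paper cites \cite{Pa22} for this result, but it records exactly your ingredients: the one-step relation $\mathbb{E}[\beta(M_L\delta,l,1)] = \frac{1}{l}\mathbb{E}[\beta(\delta,l,1)] + \bigl(l-\frac{1}{l}\bigr)$ (equation (\ref{eqn:important3}) with $d=1$) and its iterate for $M_L^2$, combined linearly with the weights defining $M$. Your coefficient and constant-term computations, including the factorization $l^5-l^4-l+1=(l-1)(l^4-1)$, are correct.
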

From now on, we set
\begin{equation}
    \gamma_l := 1 - \frac{l^2 - l + 1}{l^3}, \hspace{15pt} \kappa_l := 1 + \frac{1}{l^3}.
\end{equation}
We note that iterating Theorem \ref{thm:convergence} by $k$ times gives
\begin{align} \label{eqn:important}
    \begin{split}
        \mathbb{E}[\beta(M^k \delta, l, 1)] &= \mathbb{E}[\beta(\delta,l,1)] \cdot \gamma_l^k + \kappa_l \cdot \sum_{j=0}^{k-1} \gamma_l^j.
    \end{split}
\end{align}
Denote by $PR_l: \mathbb{Z}_{\geq 0} \to [0,1]$ the Poonen-Rains distribution which is defined as
    \begin{equation}
        PR_l(n) := \prod_{j \geq 0}^{\infty} \frac{1}{1 + l^{-j}} \cdot \prod_{j=1}^n \frac{l}{l^j - 1}.
    \end{equation}
Note that $\mathbb{E}[PR_l] = l+1$. As shown in \cite{KMR14} and \cite{Pa22}, the Poonen-Rains distribution is the stationary distribution of the Markov operator $M$. By using (\ref{eqn:important}) and allow $k$ to grow arbitrarily large, we can obtain an explicit formula for the constant $\kappa_l$ as follows:
    \begin{equation}
        \kappa_l = \mathbb{E}[PR_l] \cdot (1- \gamma_l).
    \end{equation}
Because $l$ is a positive number, the constant $\kappa_l$ is positive regardless of the choice of $l$. By substituting the explicit formula for $\kappa_l$ into (\ref{eqn:important}), we obtain
\begin{align} \label{eqn:important2}
\begin{split}
    \mathbb{E}[\beta(M^k \delta, l, 1)] - \mathbb{E}[PR_l] &= \left(\mathbb{E}[\beta(\delta,l,1)] - (l+1)\right) \cdot \gamma_l^k.
\end{split}
\end{align}
This implies that $\mathbb{E}[\beta(\delta,l,1)] < \mathbb{E}[PR_l]$ if and only if $\mathbb{E}[\beta(M^k \delta, l, 1)] < \mathbb{E}[PR_l]$ for every $k \geq 0$. 
\begin{remark} \label{remark:generalization_d}
    For $d \geq 2$, the recursive relation involves all moments up to $d$-th moments. To elaborate, one has
    \begin{equation} \label{eqn:important3}
        \mathbb{E}[\beta(M_L \delta, l, d)] = \mathbb{E}[\beta(\delta,l,d)] \cdot \frac{1}{l^d} + \left( l^d - \frac{1}{l^d} \right) \mathbb{E}[\beta(\delta,l,d-1)].
    \end{equation}
    where we simply regard $\mathbb{E}[\beta(\delta, l, 0)] = 1$. From this one obtains 
    \begin{align}
    \begin{split}
        \mathbb{E}[\beta(M \delta, l, d)] &= \gamma_{l,d} \cdot \mathbb{E}[\beta(\delta,l,d)] + \frac{(l^d+1)^2}{l^{2d+1}} \cdot \left(\sum_{s=0}^{d-1} l^s \right) \cdot \mathbb{E}[\beta(\delta,l,d-1)] \\
        &\hspace{15pt} + \frac{(l^{2d}-1)(l^{2d-2}-1)}{l^{2d}(l^2-1)} \cdot \mathbb{E}[\beta(\delta,l,d-2)],
    \end{split}
    \end{align}
    where $$\gamma_{l,d} = 1 - \frac{l}{l^2-1} + \frac{1}{l^{d+1}} + \frac{1}{l^{2d}(l^3-l)}.$$
    We note that $$1 - \frac{l}{l^2-1} < \gamma_{l,d+1} < \gamma_{l,d} \leq 1 - \frac{l^2 - l + 1}{l^3}$$ for every $d \geq 1$.
    Repeating the operator $M$ by $k$ many times such that $k > d$, we obtain that there exist explicit computable positive constants $\{\kappa_{s}(k,d,l)\}_{s=0}^d$ depending on $k, d, $ and $l$ but independent of $\gamma_{l,d}$ such that
    \begin{equation}
        \mathbb{E}[\beta(M^k \delta, l, d)] = \kappa_0(k,l,d) \cdot \sum_{j=0}^{k-1} \gamma_{l,1}^j + \sum_{s=1}^d \kappa_s(k,l,d) \cdot \gamma_{l,s}^k \cdot \mathbb{E}[\beta(\delta,l,s)],
    \end{equation}
    We note that terms of form $c \cdot \prod_{t \geq s} \gamma_{l,t}^{k_t} \cdot \mathbb{E}[\beta(\delta,l,s)]$ for some constant $c > 0$ and $k_s + k_{s+1} + \cdots + k_d \leq k$ do occur as terms in the equation above. But we can subsume such terms inside the constant term $\kappa_s(k,l,d)$ by rewriting such terms as $c' \cdot \gamma_{l,s}^k \cdot \prod_{t > s} (\gamma_{l,t} / \gamma_{l,s})^{k_t} \cdot \mathbb{E}[\beta(\delta,l,s)]$. (Note that $\gamma_{l,t} / \gamma_{l,s} < 1$ for all $t > s$). Similar to the case when $d = 1$, the term $\kappa_0(k,l,d)$ converges to $\mathbb{E}[(PR_l)^d] \cdot (1-\gamma_{l,1})$ as $k$ grows arbitrarily large. Unlike the case where $d = 1$, however, the values of $\kappa_0(k,l,d)$ changes as $k$ varies. Such a change in constants prevents one from obtaining an explicit equation for $\mathbb{E}[\beta(M^k \delta,l,d)] - \mathbb{E}[(PR_l)^d]$ unlike the case for $d = 1$ as in equation (\ref{eqn:important2}). Nevertheless, one can deduce that 
    \begin{equation}
        \kappa_{0}(k,l,d) \to \mathbb{E}[(PR_l)^d] \cdot (1 - \gamma_{l,1}) \hspace{20pt} \text{ as } k \to \infty,
    \end{equation}
    from which one may deduce that for sufficiently large $k$,
    \begin{align} \label{eqn:generalization_d}
    \begin{split}
        \mathbb{E}[\beta(M^k \delta, l, d)] - \mathbb{E}[(PR_l)^d] &\to \left(\kappa_1(k,l,d) \cdot \mathbb{E}[\beta(\delta,l,1)] - \prod_{m=1}^d (l^m+1) \right) \cdot \gamma_{l,1}^k \\
        &\hspace{15pt} + \sum_{s=2}^d \kappa_s(k,l,d) \cdot \mathbb{E}[\beta(\delta,l,s)] \cdot \gamma_{l,s}^k, \hspace{20pt} \text{ as } k \to \infty.
    \end{split}
    \end{align}
\end{remark}

We return back to the case where $d = 1$. One can obtain explicit conditions when $\mathbb{E}[\beta(\delta,l,1)] \neq \mathbb{E}[PR_l]$, as well as the lower bound of rate of convergence.
\begin{corollary} \label{cor:convergence}
    Suppose $\delta: \mathbb{Z}_{\geq 0} \to [0,1]$ is a probability distribution such that satisfies the following two conditions.
    \begin{itemize}
    \item There are only finitely many $j \in \mathbb{Z}_{\geq 0}$ such that $\delta(j) \neq 0$.
    \item There exists a fixed positive number $L$ that is not divisible by $l-1$ such that for all $j$'s such that $\delta(j) \neq 0$, we have $\delta(j) = \frac{a_j}{L}$ for some positive integer $a_j$.
    \end{itemize}
    Then the following inequality holds for every $k \geq 1$:
    \begin{equation}
        \left| \mathbb{E}[\beta(M^k \delta, l, 1)] - \mathbb{E}[PR_l] \right| \geq \frac{1}{L} \cdot \gamma_l^k.
    \end{equation}
    Furthermore, the differences $\mathbb{E}[\beta(M^k \delta, l, 1)] - \mathbb{E}[PR_l]$ are all positive or all negative for every $k \geq 0$.
\end{corollary}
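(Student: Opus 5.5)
The plan is to read everything off the exact identity (\ref{eqn:important2}), which already isolates the deviation from the Poonen--Rains mean:
\begin{equation*}
    \mathbb{E}[\beta(M^k \delta, l, 1)] - \mathbb{E}[PR_l] = \left(\mathbb{E}[\beta(\delta,l,1)] - (l+1)\right) \cdot \gamma_l^k .
\end{equation*}
Both claims of the corollary then reduce to one fact about the initial distribution:
\begin{equation*}
    \left|\mathbb{E}[\beta(\delta,l,1)] - (l+1)\right| \geq \frac{1}{L},
\end{equation*}
together with the observation that $\gamma_l>0$. For the latter, $l^3 - l^2 + l - 1 = (l-1)(l^2+1) > 0$ for every prime $l$, so $\gamma_l \in (0,1)$.

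For the key fact, I use the two hypotheses on $\delta$. Since $\delta$ has finite support and takes values $a_j/L$ with $a_j$ positive integers, the expectation is a finite sum:
\begin{equation*}
    \mathbb{E}[\beta(\delta,l,1)] = \frac{1}{L}\sum_j a_j l^j .
\end{equation*}
Because $\delta$ is a probability distribution, $\sum_j a_j = L$. Hence
\begin{equation*}
    \mathbb{E}[\beta(\delta,l,1)] - (l+1) = \frac{N}{L}, \qquad N := \sum_j a_j l^j - (l+1)L \in \mathbb{Z}.
\end{equation*}
It therefore suffices to show $N \neq 0$, since a nonzero integer has absolute value at least $1$.

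The main step is a congruence modulo $l-1$. Since $l \equiv 1 \pmod{l-1}$, we get $\sum_j a_j l^j \equiv \sum_j a_j = L$ and $(l+1)L \equiv 2L$, so $N \equiv -L \pmod{l-1}$. By hypothesis $l-1 \nmid L$, so $N \not\equiv 0 \pmod{l-1}$ and in particular $N \neq 0$. This gives $|N| \geq 1$ and hence the displayed lower bound. (For $l=2$ the hypothesis can never hold, because $l-1=1$ divides every $L$, so the statement is vacuous there; for $l \geq 3$ the argument goes through as written.)

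To conclude, multiply this bound by $\gamma_l^k>0$ using (\ref{eqn:important2}), which gives $\left|\mathbb{E}[\beta(M^k\delta,l,1)] - \mathbb{E}[PR_l]\right| \geq \gamma_l^k/L$ for every $k \geq 0$, in particular for every $k \geq 1$. For the sign statement, the difference at step $k$ equals $(N/L)\gamma_l^k$. This has the fixed sign of $N$ for all $k \geq 0$, so the differences are all positive or all negative. I do not expect a real obstacle: the only nontrivial input is the congruence showing $N \neq 0$, and the rest is the linear recursion from Theorem \ref{thm:convergence}.
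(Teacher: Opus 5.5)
Your proof is correct and follows essentially the same route as the paper. Both arguments use (\ref{eqn:important2}) to reduce the statement to showing $\mathbb{E}[\beta(\delta,l,1)]\neq l+1$, and both establish this by a congruence modulo $l-1$: the paper derives a contradiction from $Ll=(l-1)\cdot(\cdots)$, while you show $N\equiv -L \pmod{l-1}$. You are also slightly more explicit than the paper in two places: you justify the bound $|N|/L\geq 1/L$ by the integrality of $N$, and you verify $\gamma_l=(l-1)(l^2+1)/l^3>0$, which the sign statement requires.
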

\begin{proof}
    Without loss of generality, assume that $\delta(j) \neq 0$ if and only if $j \in \{0, 1, \cdots, m\}$. The condition that $\delta$ is a probability distribution implies that
    \begin{equation}
        L \cdot \mathbb{E}[\beta(\delta,l,d)] = \sum_{j=0}^m l^{dj} a_j = L + \sum_{j=1}^m (l^{dj}-1) a_j = L + (l^d - 1) \cdot \left(\sum_{j=1}^m \frac{l^{dj}-1}{l^d - 1} a_j \right).
    \end{equation}
    We now choose $d = 1$. Suppose that $\mathbb{E}[\beta(\delta,l,1)] = \mathbb{E}[PR_l] = l+1$. Then we can subtract both sides of the equation above by $L$ to obtain
    \begin{equation*}
        L \cdot l = (l - 1) \cdot \left(\sum_{j=1}^m \frac{l^{j}-1}{l - 1} a_j \right).
    \end{equation*}
    But this is a contradiction because the left hand side is not divisible by $l-1$, but the right hand side is divisible by $l-1$. Therefore, $\mathbb{E}[\beta(\delta,l,1)] \neq \mathbb{E}[PR_l]$. The second condition on $\delta$ implies that $|\mathbb{E}[\beta(\delta,l,1)] - \mathbb{E}[PR_l]| \geq \frac{1}{L}$. We can hence use (\ref{eqn:important2}) to obtain the desired result as well as the uniform signature of the differences $\mathbb{E}[\beta(M^k \delta, l, 1)] - \mathbb{E}[PR_l]$ for every $k \geq 0$.
\end{proof}
\begin{remark}
We note that the statement of Corollary \ref{cor:convergence} can be generalized to any $d$-th moments of probability distribution as well, as long as $l^d - 1$ does not divide $\prod_{m=1}^d (l^m+1) - 1$. In such cases, one would need to change the second condition so that there exists a fixed positive number $L$ that is not divisible by $l^d - 1$ such that for all $j$'s such that $\delta(j) \neq 0$, we have $\delta(j) = \frac{a_j}{L}$ for some positive integer $a_j$.
\end{remark}

We obtain explicit rate of convergence results on collections of probability distributions $\delta_h: \mathbb{Z}_{\geq 0} \to [0,1]$ by using the probability distribution $\delta_1$, both probability distributions of which were defined in Definition \ref{defn:initial-prob}.

\begin{lemma} \label{lemma:init-prob}
    Let $h$ be a square-free polynomial satisfying the following three conditions:
    \begin{itemize}
        \item All irreducible factors of $h$ are at most $\mathfrak{n}$,
        \item $\omega(h) < R_2$,
        \item For all $i \geq 1$, we have $\mathcal{P}_k(i) > A' \cdot (\# \Sigma_E + \#\{g \in \mathcal{P}_k : g | h\})^A$ for some fixed constant $A \geq 2$ and $A' > 0$.
    \end{itemize}
    We define $\omega'(h)$ to be the quantity
    \begin{equation}
        \omega'(h) := \#\{\mathfrak{v} \in \mathcal{P}_1 : \mathfrak{v} \mid h\} + 2 \cdot \#\{\mathfrak{v} \in \mathcal{P}_2 : \mathfrak{v} \mid h\}.
    \end{equation}
    Suppose that $E/K$ is an elliptic curve satisfying Condition \ref{condition} and $\mathbb{E}[\beta(\delta_1, l, 1)] \neq \mathbb{E}[PR_l]$.
    Suppose $q$ satisfies $q > e^{\left(24 \log l + 4 \cdot \log D_{E,l} + 12 \cdot (\log l) \cdot \#\Sigma_E \right)}$.
    Then the distribution $\delta_h: \mathbb{Z}_{\geq 0} \to [0,1]$ satisfies
    \begin{align}
    \begin{split}
        \mathbb{E}[\beta(\delta_h,l,1)] - \mathbb{E}[PR_l] &= \left(\mathbb{E}[\beta(\delta_1, l, 1)] - \mathbb{E}[PR_l] \right) \cdot \left( \frac{1}{l} \right)^{\omega'(h)} + O_{E,l} \left( \log n \cdot n^{\frac{3 \# \Sigma_E \log l}{\log \log \log n}} \cdot q^{-1/2} \right),
    \end{split}
    \end{align}
    where the implied constant of the error term is independent of $n$ and $q$. Furthermore, for such values of $n$ and $q$, the differences $\mathbb{E}[\beta(\delta_h,l,1)] - \mathbb{E}[PR_l]$ and $\mathbb{E}[\beta(\delta_1,l,1)] - \mathbb{E}[PR_l]$ are either both positive or both negative.
\end{lemma}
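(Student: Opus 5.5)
We argue by induction on $\omega(h)$, adding one irreducible factor of $h$ at a time. The starting point is $\delta_1$. Write $h = h' \mathfrak{v}$ with $\mathfrak{v} \in \mathcal{P}_i(d')$ for some $i \in \{0,1,2\}$ and $d' \leq \mathfrak{n}$. The distribution $\delta_h$ is obtained from $\delta_{h'}$ by averaging $\mathrm{rk}(\chi')$ over $\chi \in \Omega_{h'}$ and $\chi' \in \Omega_{\chi,\mathfrak{v}}$. By Proposition \ref{prop:Selmer_vary} and Proposition \ref{prop:cheb2}, this step acts on $\delta_{h'}$ as the identity if $\mathfrak{v} \in \mathcal{P}_0$, as $M_L$ if $\mathfrak{v} \in \mathcal{P}_1$, and as $M_L^2$ if $\mathfrak{v} \in \mathcal{P}_2$. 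The $M_L^2$ case uses the split of $t_\chi(\mathfrak{v})$ into the values $0,1,2$ with the probabilities $d_{2,j}$. There is one difference from Proposition \ref{prop:cheb3}: here $\mathfrak{v}$ is fixed rather than averaged over $\mathcal{P}_k(i)$. So the Chebotarev input has to be the joint equidistribution of $(\mathrm{loc}_\mathfrak{v}$ of Selmer elements, local characters at the places of $\Sigma_E(h'))$ as $\mathfrak{v}$ varies. The plan is to reinterpret $\delta_h$ as an average over $h$ in the same degree and type class. That is the sense in which the third hypothesis (that $\#\mathcal{P}_k(i)$ is large compared with $\#\Sigma_E(h)$) is used, just as in the proof of Proposition \ref{prop:cheb3}. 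The error at each step is $16 D_{E,l}\, l^{4+3\#\Sigma_E(h')} q^{-d'/2}$.

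The key algebraic input is the $d=1$ case of (\ref{eqn:important3}):
\[
\mathbb{E}[\beta(M_L\delta,l,1)] = \tfrac1l\,\mathbb{E}[\beta(\delta,l,1)] + \left(l-\tfrac1l\right).
\]
Because $(l+1)/l + l - 1/l = l+1$, this rewrites as
\[
\mathbb{E}[\beta(M_L\delta,l,1)]-(l+1) = \tfrac1l\left(\mathbb{E}[\beta(\delta,l,1)]-(l+1)\right).
\]
So each factor in $\mathcal{P}_1$ multiplies the deviation from $\mathbb{E}[PR_l]$ by $1/l$, each factor in $\mathcal{P}_2$ multiplies it by $1/l^2$, and each factor in $\mathcal{P}_0$ leaves it unchanged. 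Iterating over the factors of $h$ gives the main term $(\mathbb{E}[\beta(\delta_1,l,1)]-\mathbb{E}[PR_l])\, l^{-\omega'(h)}$.

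For the error, the perturbation introduced at each step is also contracted by the later steps, since those are affine maps with slope at most $1$. The total error is therefore at most the sum of the per-step errors, each multiplied by $\max_J l^J$ over the support, which is at most $l^{D_{E,l}+2R_2}$. We bound each per-step error by its worst case $d'=1$, giving $q^{-1/2}$. There are $\omega(h) < R_2 \le \log n$ steps, and $l^{3\#\Sigma_E(h)} \le l^{3\#\Sigma_E}\, l^{3R_2}$. Converting with $l^{R_2} = n^{\log l/\log\log\log n}$ yields the stated error shape $\log n\cdot n^{c\log l/\log\log\log n} q^{-1/2}$. The hypothesis on $q$ is there to make Proposition \ref{prop:cheb2} valid for all $d' \ge 1$; it is squared relative to that proposition to absorb the $l^{D_{E,l}}$-type prefactors. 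The exponent bookkeeping, matching the exact power $3\#\Sigma_E\log l/\log\log\log n$, is the part I expect to need the most care, in particular whether the $l^{2R_2}$ weight can be avoided. One way to avoid it is to track the affine recursion on expectations directly, using $|\mathbb{E}[\beta(\delta,l,1)]-\mathbb{E}[\beta(\delta',l,1)]| \le l^{\max}\|\delta-\delta'\|_1$.

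For the sign statement, the factor $l^{-\omega'(h)}$ is positive. It is also bounded below by $l^{-2R_2} = n^{-2\log l/\log\log\log n}$. By the lattice argument of Corollary \ref{cor:convergence}, or simply because it is fixed and nonzero, $|\mathbb{E}[\beta(\delta_1,l,1)]-\mathbb{E}[PR_l]|$ is a fixed positive constant. For $n$ and $q$ in the stated range, the main term therefore dominates the error term, and the two differences have the same sign. The main obstacle is rigorously justifying the per-fixed-place Chebotarev step: it requires independence of the Frobenius at $\mathfrak{v}$ from the local characters already chosen at the earlier places.
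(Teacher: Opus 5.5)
Your outline matches the paper's own proof: the main term comes from the $d=1$ case of (\ref{eqn:important3}), so each factor of $h$ in $\mathcal P_1$ (resp.\ $\mathcal P_2$) multiplies the deviation from $l+1$ by $1/l$ (resp.\ $1/l^2$), and the error comes from invoking Proposition \ref{prop:cheb3} once per factor. However, the step you single out at the end as the main obstacle is a genuine gap, and for a fixed $h$ it cannot be closed. The paper's one-line appeal to Proposition \ref{prop:cheb3} has the same defect.

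Propositions \ref{prop:cheb2} and \ref{prop:cheb3} only control the \emph{proportion} of places $\mathfrak v\in\mathcal P_i(d')$ with a given value of $t_\chi(\mathfrak v)$. For one fixed $\mathfrak v$, $t_\chi(\mathfrak v)$ is a deterministic function of $\chi\in\Omega_{h'}$, and nothing makes the passage from $\delta_{h'}$ to $\delta_{h'\mathfrak v}$ equal to $M_L$ or $M_L^2$. Concretely, take $h=\mathfrak v\in\mathcal P_1$ whose Frobenius kills all the classes in the finitely many groups $\mathrm{Sel}(E[l],\chi)$, $\chi\in\Omega_1$, under $\mathrm{loc}_{\mathfrak v}$. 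Here Frobenius is taken in the compositum of $K(E[l])$ with the fields cut out by these groups. This is a nonempty Chebotarev condition, by the same Galois-theoretic input behind the density $l^{-\mathrm{rk}(\chi)}$ in Proposition \ref{prop:cheb2}, so such $\mathfrak v$ exist with positive density.

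For such $\mathfrak v$ we have $t_\chi(\mathfrak v)=0$ for every $\chi$. By Proposition \ref{prop:Selmer_vary}, $\delta_{\mathfrak v}$ is then $\delta_1$ shifted up by one, so $\mathbb E[\beta(\delta_{\mathfrak v},l,1)]=l\,\mathbb E[\beta(\delta_1,l,1)]$. The claimed main term, by contrast, is $\tfrac1l\mathbb E[\beta(\delta_1,l,1)]+l-\tfrac1l$. The discrepancy $(l-\tfrac1l)(\mathbb E[\beta(\delta_1,l,1)]-1)$ does not decay with $q$. Moreover, when $\tfrac{l+1}{l}<\mathbb E[\beta(\delta_1,l,1)]<l+1$, the two deviations from $l+1$ have opposite signs, contradicting the sign claim as well.

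So your proposed repair, averaging $\delta_h$ over all $h$ with the same degrees and $\mathcal P_i$-types, proves a different statement. That averaged statement is provable along your lines, and it is what the eventual sum over $h\in\mathcal Q(n)$ in Proposition \ref{prop:convergence-m0} really needs. It is not, however, the lemma for an individual $h$.

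Separately, your sign argument needs the main term, which is only bounded below by a multiple of $n^{-2\log l/\log\log\log n}$, to dominate an error of size $\log n\cdot n^{C\log l/\log\log\log n}q^{-1/2}$. The lemma's lower bound on $q$ is an absolute constant, so this requires an extra $n$-dependent hypothesis such as $q^{1/2}\gg \log n\cdot n^{(C+2)\log l/\log\log\log n}$. A hypothesis of this type is only imposed later, in Proposition \ref{prop:convergence-m0}. The lattice argument of Corollary \ref{cor:convergence} is no substitute. It yields $\mathbb E[\beta(\delta_h,l,1)]\neq l+1$, with a $1/L$ gap when $l-1\nmid L$, but says nothing about agreement of sign with $\delta_1$.
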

\begin{proof}
    The main term follows from equation (\ref{eqn:important3}) with $d = 1$. We note that we cannot directly apply equation (\ref{eqn:important2}) because we chose $h$ as a given element in $\mathcal{Q}(n)$. The error term originates from applying Proposition \ref{prop:cheb3} to each iteration of the Markov operator $M$, from which the condition on the lower bound of $q$ is obtained (and it is in fact twice the exponent appearing in the condition on the lower bound of $q$). The implied constants of the error term is at most $16 D_{E,l} \cdot l^{4}$, which is independent of $n$ and $q$.
\end{proof}
\begin{remark} \label{rem:l5}
    In the case where $l \geq 7$, we have $l \neq 2^m + 1$ for any non-negative integer $m$. The probability distribution $\delta_h$ given a square-free polynomial $h$ satisfying conditions in Lemma \ref{lemma:init-prob} also satisfies both conditions of Corollary \ref{cor:convergence}. The cardinality of $\Omega_h$ is given by $(l^2-1)^{\# \Sigma_E} \cdot (l^2-l)^{\omega(h)}$. Nevertheless, we recall that a cyclic order-$l$ extension of a field gives rise to $l-1$ many cyclic order-$l$ characters. Hence, among such $l-1$ many characters, the dimensions of $\Sel(E[l],\chi)$ are identical. This implies that the non-zero values of probability distribution $\delta_h$ all have denominators of form $(l+1)^{\# \Omega_E} \cdot l^{\omega(h)}$ (without applying cancellation between numerators and denominators).
\end{remark}
\begin{remark} \label{rem:init-prob}
    Using the definition and an alternate simple expression for $\delta_1$ from Definition \ref{defn:initial-prob} and Remark \ref{rem:initial-prob}, we can compute $\mathbb{E}[\beta(\delta_1,l,1)]$ using the following asymptotic formula:
    \begin{equation}
        \mathbb{E}[\beta(\delta_1,l,1)] := \sum_{J=0}^\infty l^J \cdot \mathbb{P}[\delta_1 = J] = \lim_{n \to \infty} \sum_{J = 0}^\infty l^J \cdot \frac{\#\{f \in \mathcal{P}_0(n) : \dim_{\mathbb{F}_l} \mathrm{Sel}_{1-\sigma_{l,f}}(A_f/K) = J\}}{\# \mathcal{P}_0(n)}.
    \end{equation}
\end{remark}

We now obtain explicit rate of convergences for the contributions to distribution of Selmer groups over the three types of subsets $F_n^{main,0}(\mathbb{F}_q)$, $F_n^{main,1}(\mathbb{F}_q)$, and $F_n^{[1]}(\mathbb{F}_q)$. Given that all the analytic results we have used up to this point are uniform results which hold for sufficiently large $n$, we can now pick $q$ to be sufficiently large to simplify such explicit rate of convergences.
\begin{lemma} \label{lemma:convergence}
    Assume all conditions from Propositions \ref{prop:reinterpret-m0}, \ref{prop:reinterpret-m1}, \ref{prop:reinterpret-[1]}. Suppose $q$ satisfies
    \begin{equation}
        q > \max \left\{(R_2 + \deg \Delta_E)^2, \exp({24 \log l + 4 \log D_{E,l} + (12 \log l) \cdot \# \Sigma_E}) \right\}.
    \end{equation}
    We define
    \begin{equation}
        \gamma_{l,1} := \frac{l^3 - l + 1}{l^4}.
    \end{equation}
    Then the following equations hold for any $R_2 < k \leq R_1$ and any square-free polynomial $h \in \mathcal{Q}(n)$, where we recall $\omega'(h)$ as defined in Lemma \ref{lemma:init-prob}:
    \begin{align*}
        \begin{split}
            & \hspace{15pt} \frac{\sum_{\substack{f \in F_n^{main,0,\langle h,n_0 \rangle}(\mathbb{F}_q) \\ \omega(f) = k}} \left[\#\mathrm{Sel}_{1-\sigma_{l,f}}(A_f/K) - (l+1) \right]}{\# \{f \in F_n^{main,0,\langle h, n_0\rangle}(\mathbb{F}_q) : \omega(f) = k \}} \\
            &= \left( \mathbb{E}[\beta(\delta_1,l,1)] - (l+1) \right) \cdot \gamma_l^{k-\omega(h)-1} \cdot \frac{1}{l^{\omega'(h)}} + O_{E,l} \left( \log n \cdot n^{\frac{3 \# \Sigma_E \log l}{\log \log \log n}} \cdot \gamma_l^{k-\omega(h)-1} \cdot q^{-1/2} \right).
        \end{split}
    \end{align*}
    \begin{align*}
        \begin{split}
            & \hspace{15pt} \frac{\sum_{\substack{f \in F_n^{main,1,\langle h,n_0 \rangle}(\mathbb{F}_q) \\ \omega(f) = k}} \left[\#\mathrm{Sel}_{1-\sigma_{l,f}}(A_f/K) - (l+1) \right]}{\# \{f \in F_n^{main,1,\langle h, n_0\rangle}(\mathbb{F}_q) : \omega(f) = k \}} \\
            &= \left( \mathbb{E}[\beta(\delta_1,l,d)] - (l+1) \right) \cdot \gamma_{l,1}^{k-\omega(h)-1} \cdot \frac{1}{l^{\omega'(h)+1}} + O_{E,l} \left( \log n \cdot n^{\frac{3 \# \Sigma_E \log l}{\log \log \log n}} \cdot \gamma_{l,1}^{k-\omega(h)-1} \cdot q^{-1/2} \right).
        \end{split}
    \end{align*}
    \begin{align*}
        \begin{split}
            & \hspace{15pt} \frac{\sum_{\substack{f \in F_n^{[1],\langle h,n_0 \rangle}(\mathbb{F}_q) \\ \omega(f) = k}} \left[\#\mathrm{Sel}_{1-\sigma_{l,f}}(A_f/K)\right]}{\# \{f \in F_n^{[1],\langle h, n_0\rangle}(\mathbb{F}_q) : \omega(f) = k \}} \\
            &\leq l^2 \cdot \Biggl[(l+1) + \left( \mathbb{E}[\beta(\delta_1,l,1)] - (l+1) \right) \cdot \frac{1}{l^{2k-2\omega(h)+\omega'(h)-2}} + O_{E,l} \left( \log n \cdot n^{\frac{3 \# \Sigma_E \log l}{\log \log \log n}} \cdot \gamma_l^{k-\omega(h)-1} \cdot q^{-1/2} \right) \Biggr].
        \end{split}
    \end{align*}
\end{lemma}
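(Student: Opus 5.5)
The plan is to combine the three reinterpretation propositions (Propositions \ref{prop:reinterpret-m0}, \ref{prop:reinterpret-m1}, \ref{prop:reinterpret-[1]}) with $d=1$ and the exact linear recursion for the first exponential moment. For fixed $h \in \mathcal{Q}(n)$, fixed $n_0 > \mathfrak{n}$ and fixed $k$ with $R_2 < k \leq R_1$, every $f$ in the relevant slice has $\omega(f^*) = k - \omega(h)$. The random variable $\beta$ is therefore constant on the slice, and its conditional expectation equals $\mathbb{E}[\beta(M^{k-\omega(h)-1}\delta_h,l,1)]$, respectively $\mathbb{E}[\beta(M_L M_1^{k-\omega(h)-1}\delta_h,l,1)]$ and $\mathbb{E}[\beta(M_L^{2(k-\omega(h)-1)}\delta_h,l,1)]$. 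The propositions are stated summed over slices, but the same argument applies slice by slice, so each average of $\#\mathrm{Sel}$ over a slice equals this expectation up to $O(n^{-\log n})$. This error is negligible against $q^{-1/2}$ once $q$ is at most polynomial in $n$; otherwise it is simply absorbed.

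\textbf{First identity.} Apply (\ref{eqn:important2}) with $\delta = \delta_h$ and exponent $k-\omega(h)-1$. This gives
\[
\mathbb{E}[\beta(M^{k-\omega(h)-1}\delta_h,l,1)] - (l+1) = \bigl(\mathbb{E}[\beta(\delta_h,l,1)]-(l+1)\bigr)\gamma_l^{k-\omega(h)-1}.
\]
Then substitute Lemma \ref{lemma:init-prob}. The hypothesis on $q$ in Lemma \ref{lemma:init-prob} is part of our hypothesis. The third condition there, $\#\mathcal{P}(i) > A'(\#\Sigma_E+\omega(h))^A$, follows from $q > (R_2+\deg\Delta_E)^2$, since $\#\mathcal{P}(i) \geq q^i/i - 2q^{i/2}$ is of order $q$ already for $i=1$. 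The main term is $(\mathbb{E}[\beta(\delta_1,l,1)]-(l+1))\,l^{-\omega'(h)}$. The error term of Lemma \ref{lemma:init-prob} gets multiplied by $\gamma_l^{k-\omega(h)-1}$, which is exactly the stated error.

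\textbf{Second identity.} I would first derive the one-step recursions for $M_L$ and $M_1$. From (\ref{eqn:important3}) with $d=1$,
\[
\mathbb{E}[\beta(M_L\delta,l,1)] = \tfrac{1}{l}\mathbb{E}[\beta(\delta,l,1)] + \bigl(l-\tfrac{1}{l}\bigr).
\]
Iterating once, $\mathbb{E}[\beta(M_L^2\delta,l,1)] - (l+1) = l^{-2}\bigl(\mathbb{E}[\beta(\delta,l,1)]-(l+1)\bigr)$, since $l+1$ is the fixed point of $x \mapsto x/l + l - 1/l$. Hence $M_1 = \frac{l^2-1}{l^2}M_L + \frac{1}{l^2}M_L^2$ contracts deviations from $l+1$ by the factor
\[
\frac{l^2-1}{l^3} + \frac{1}{l^4} = \frac{l^3-l+1}{l^4} = \gamma_{l,1},
\]
and the final $M_L$ contributes one more factor $1/l$. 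This gives the main term $\gamma_{l,1}^{k-\omega(h)-1}\, l^{-\omega'(h)-1}\,(\mathbb{E}[\beta(\delta_1,l,1)]-(l+1))$. The Lemma \ref{lemma:init-prob} error propagates with the same contraction, since all maps are affine with slope at most $1$.

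\textbf{Third inequality.} Proposition \ref{prop:reinterpret-[1]} supplies the factor $l^{2}$ (with $d=1$). Applying the $M_L^2$ recursion $k-\omega(h)-1$ times gives
\[
\mathbb{E}[\beta(M_L^{2(k-\omega(h)-1)}\delta_h,l,1)] = (l+1) + l^{-2(k-\omega(h)-1)}\bigl(\mathbb{E}[\beta(\delta_h,l,1)]-(l+1)\bigr).
\]
Inserting Lemma \ref{lemma:init-prob} yields the exponent $2k-2\omega(h)+\omega'(h)-2$. The error is bounded by the stated one because $l^{-2} \leq \gamma_l$.

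\textbf{Main obstacle.} The delicate step is justifying that the per-slice statements follow from the propositions, in particular that the Chebotarev error in Proposition \ref{prop:cheb3} stays uniform across all slices. This is where the lower bound $q > (R_2+\deg\Delta_E)^2$ enters, ensuring that enough primes of each degree avoid $\Sigma_E(h)$. I would handle it by rereading the proof of Proposition \ref{prop:reinterpret-m0}, which already works with fixed $h$, $n_0$, $k$, and checking that the error tracking through $\omega(h) \leq R_2$ iterations gives the factor $l^{3\#\Sigma_E(h)} \leq n^{3\#\Sigma_E\log l/\log\log\log n}$ times $\log n$.
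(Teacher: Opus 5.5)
Your proposal is correct and matches the paper's proof. You apply the reinterpretation propositions slice by slice (on each slice $\omega(f^*)=k-\omega(h)$). You then contract the deviation from $l+1$ in three ways: via equation (\ref{eqn:important2}) for $M$; via the factor $\gamma_{l,1}=\frac{l^2-1}{l^3}+\frac{1}{l^4}$ for $M_1$, followed by a final factor $1/l$ from $M_L$; and via $l^{-2}$ per step for $M_L^2$. Finally you substitute Lemma \ref{lemma:init-prob}, exactly as the paper does.

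One small caveat: your handling of the residual $O(n^{-\log n})$ term for very large $q$ (``otherwise it is simply absorbed'') is as informal as the paper's ``gets subsumed''. It is best justified by noting that this term really comes from Chebotarev errors of size $q^{-i/2}$ with $i>\mathfrak{n}$, and these only shrink as $q$ grows.
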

\begin{proof}
    All these three expressions follow from substituting Lemma \ref{lemma:init-prob} into equation (\ref{eqn:important2}), and applying the proof of Proposition \ref{prop:reinterpret-m0} to the set $\{f \in F_n^{main,0,\langle h, n_0 \rangle}(\mathbb{F}_q) : \omega(f) = k\}$, proof of Proposition \ref{prop:reinterpret-m1} to the set $\{f \in F_n^{main,1,\langle h, n_0 \rangle}(\mathbb{F}_q) : \omega(f) = k\}$, and the proof of Proposition \ref{prop:reinterpret-[1]} to the set $\{f \in F_n^{[1],\langle h, n_0 \rangle}(\mathbb{F}_q) : \omega(f) = k\}$. The lower bound on $q$ ensures that all conditions appearing in Lemma \ref{lemma:init-prob} are satisfied.
    To apply Proposition \ref{prop:reinterpret-m1}, we use
    \begin{equation} \label{eq:lemma:convergence1}
        \mathbb{E}[\beta(M_L\delta,l,1)] = \mathbb{E}[\beta(\delta,l,1)] \cdot \frac{1}{l} + \left( l - \frac{1}{l} \right),
    \end{equation}
    and the relation
    \begin{equation} \label{eq:lemma:convergence2}
        \mathbb{E} \left[\beta \left(\left(\frac{l^2-1}{l^2}M_L + \frac{1}{l^2} M_L^2 \right)\delta,l,1 \right) \right] = \mathbb{E}[\beta(\delta,l,1)] \cdot \frac{l^3-l+1}{l^4} + \left( 1 + \frac{1}{l^3} \right) \cdot \left(l - \frac{1}{l} \right),
    \end{equation}
    which is where the constant $\gamma_{l,1}$ comes into play. Notice that using equation (\ref{eq:lemma:convergence2}) by $k - \omega(h) - 1$ many times and using equation (\ref{eq:lemma:convergence1}) for the last iteration gives the second statement of the lemma.
    
    When applying Proposition \ref{prop:reinterpret-[1]}, we use the relation
    \begin{equation}
        \mathbb{E}[\beta(M_L^2\delta,l,1)] = \mathbb{E}[\beta(\delta,l,1)] \cdot \frac{1}{l^2} + \frac{(l-1)(l+1)^2}{l^2}.
    \end{equation}
    The error term $O_{E,l}(n^{-\log n})$ gets subsumed into the error term computed in Lemma \ref{lemma:init-prob}.
\end{proof}

What remains to be computed is to take summations of all error terms in Lemma \ref{lemma:convergence} over the sets $F_n^{main,0}(\mathbb{F}_q)$, $F_n^{main,1}(\mathbb{F}_q)$, and $F_n^{[1]}(\mathbb{F}_q)$.
We first compute the contribution of rate of convergence over the set $F_n^{main,0}(\mathbb{F}_q)$.
\begin{proposition} \label{prop:convergence-m0}
    Assume conditions from Lemma \ref{lemma:convergence}. Suppose that $n$ and $q$ satisfy the following conditions for some $\epsilon > 0$:
    \begin{align}
        \begin{split}
            n &> \exp(\exp(\exp(\exp({e + 2 \log l})))), \\
            q &> \max \left\{\frac{n^{\frac{8 \# \Sigma_E \log l - 2}{\log \log \log n} + 2 \epsilon}}{(\log n)^2}, (R_2 + \deg \Delta_E)^2, \exp({24 \log l + 4 \log D_{E,l} + (12 \log l) \cdot \# \Sigma_E}) \right\}.
        \end{split}
    \end{align}
    Then the following equation holds with an explicitly computable constant $\beta_{E,l}$ such that for any $\epsilon > 0$,
    \begin{equation}
        \frac{\sum_{f \in F_n^{main,0}(\mathbb{F}_q)} (\# \mathrm{Sel}_{1-\sigma_{l,f}}(A_f/K) - (l+1))}{\# F_n(\mathbb{F}_q)} = \beta_{E,l} \cdot n^{-\frac{l^2 - l + 1}{l^3}} + O_\epsilon \left((\log n)^{-1/2 + \epsilon} \cdot n^{-\frac{l^2-l+1}{l^3}} \right).
    \end{equation}
    The implied constant of the error term is independent of $n$ and $q$, but depends on $\epsilon$.
\end{proposition}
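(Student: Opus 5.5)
The plan is to sum the first identity of Lemma \ref{lemma:convergence} over all strata $\{f \in F_n^{main,0,\langle h,n_0\rangle}(\mathbb{F}_q) : \omega(f)=k\}$, with $h \in \mathcal{Q}(n)$, $\mathfrak{n} < n_0 \le n-\deg h$, and $R_2 < k \le R_1$. Each stratum contributes its cardinality times
\[
\bigl(\mathbb{E}[\beta(\delta_1,l,1)]-(l+1)\bigr)\,\gamma_l^{k-\omega(h)-1}\, l^{-\omega'(h)} + \text{error}.
\]
After summing over $n_0$, the main term becomes
\[
\bigl(\mathbb{E}[\beta(\delta_1,l,1)]-(l+1)\bigr)\sum_{h\in\mathcal{Q}(n)} l^{-\omega'(h)}\gamma_l^{-\omega(h)-1}\sum_{k}\gamma_l^{k}\,\#\{f\in F_n^{main,0}(\mathbb{F}_q): f_*=h,\ \omega(f)=k\}.
\]
The first task is to replace the restricted count by a full count. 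Since $F_n^{main,1}$, $F_n^{[1]}$, $F_n^{[2]}$ and $F_n^{[3]}$ are negligible by Proposition \ref{prop:Fn23-contribution} and by Chebotarev (Theorem \ref{thm:chebotarev}), I would use the generating-function identity
\[
\sum_{f\in F_n} z^{\omega(f)} \approx \frac{q^n}{\Gamma(z)}\, n^{z-1}\prod_g(\cdots).
\]
This is the function-field Selberg--Delange method that underlies Theorem \ref{thm:SatheSelberg}.

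Concretely, I would factor the weight multiplicatively. A place $g$ of degree $\le \mathfrak{n}$ in $\mathcal{P}_i$ receives weight $\gamma_l\, l^{-i}\gamma_l^{-1} = l^{-i}$, while a large place receives weight $\gamma_l$. Writing $\sum_f \gamma_l^{\omega(f^*)} w(f_*)$ as a Dirichlet-type series, its asymptotic is
\[
\frac{q^n}{\Gamma(\gamma_l)}\, n^{\gamma_l-1}\, C_{E,l}\bigl(1+O((\log n)^{-1})\bigr).
\]
Normalizing by $\#F_n(\mathbb{F}_q)\sim q^n$ and including the factor $n$ from the choice of the large place gives the order $n^{\gamma_l-1} = n^{-(l^2-l+1)/l^3}$. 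Here
\[
C_{E,l}=\prod_{\deg g\le \mathfrak{n}}\Bigl(1+\frac{l^{-i(g)}}{q^{\deg g}}\Bigr)\Bigl(1-\frac{1}{q^{\deg g}}\Bigr)^{\gamma_l}\cdots
\]
is the Euler product that converges as $\mathfrak{n}\to\infty$, and its tail beyond $\mathfrak{n}$ is $O(q^{-\mathfrak{n}/2})$, which is negligible. The constant $\beta_{E,l}$ is then $(\mathbb{E}[\beta(\delta_1,l,1)]-(l+1))\,\gamma_l^{-1}C_{E,l}/\Gamma(\gamma_l)$, possibly times the density of $\mathcal{P}_0$-containing $f$, which tends to $1$.

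To make this rigorous through Theorem \ref{thm:SatheSelberg}, I would alternatively do a saddle-point sum. Using $\#\{\omega=k\}\approx \frac{q^n}{n}\frac{(\log n)^{k-1}}{(k-1)!}G(\frac{k-1}{\log n})$, one has
\[
\sum_k \gamma_l^{k}\frac{(\log n)^{k-1}}{(k-1)!}G\Bigl(\frac{k-1}{\log n}\Bigr)=\gamma_l\, n^{\gamma_l} G(\gamma_l)+O\bigl(n^{\gamma_l}(\log n)^{-1/2+\epsilon}\bigr).
\]
This follows by Taylor-expanding $G$ around $z=\gamma_l$ on the window $|k-\gamma_l\log n|\le (\log n)^{1/2+\epsilon}$, with Poisson tails outside it. This window is exactly what produces the $(\log n)^{-1/2+\epsilon}$ saving. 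The truncations $k\le R_2$ and $k>R_1$ cost only $n^{-0.9}$ as in Proposition \ref{prop:Fn23-contribution}, and the $O(k/(\log n)^2)$ term of Theorem \ref{thm:SatheSelberg} contributes $O((\log n)^{-1})$ relatively.

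Finally, the error terms of Lemma \ref{lemma:convergence} carry the same weight $\gamma_l^{k-\omega(h)-1}$. Their total is therefore
\[
O\Bigl(\log n\cdot n^{\frac{3\#\Sigma_E\log l}{\log\log\log n}}\, q^{-1/2}\, \gamma_l^{-R_2}\Bigr)\, n^{\gamma_l-1}.
\]
Since $\gamma_l^{-R_2}\le n^{\log l/\log\log\log n}$, the hypothesis on $q$ (with its $n^{\frac{8\#\Sigma_E\log l-2}{\log\log\log n}+2\epsilon}/(\log n)^2$ bound) makes this $O(n^{-\epsilon}) = o((\log n)^{-1/2})$ relative to the main term.

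The main obstacle is the middle step: controlling the sum over $h$ uniformly. The weight $\gamma_l^{-\omega(h)}$ inflates small-degree factors, so the count of $f$ with $f_*=h$ must be shown to factor as (the count of $f^*$ with $\omega=k-\omega(h)$) times a local density. I would handle this by splitting the Sathe--Selberg count into small-prime and large-prime parts, using that $f^*$ ranges over polynomials of degree $n-\deg h$ with only large prime factors. I would apply Theorem \ref{thm:SatheSelberg}-type uniformity with the modified $G$, and check that $\deg h\le R_2\mathfrak{n}=o(n)$, so that $(n-\deg h)^{\gamma_l-1}=n^{\gamma_l-1}(1+o(1))$.
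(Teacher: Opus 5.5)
Your saddle-point computation is the paper's. You weight the Sathe--Selberg count by $\gamma_l^{k-1}$, restrict to the window $|k-\gamma_l\log n|\le(\log n)^{1/2+\epsilon}$, and Taylor-expand $G$ about $\gamma_l$, which is the source of the $(\log n)^{-1/2+\epsilon}$. You then absorb the errors of Lemma \ref{lemma:convergence} via $\gamma_l^{-R_2}\le n^{\log l/\log\log\log n}$ and the lower bound on $q$, as the paper does.

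Where you genuinely differ is the sum over $h$. You single it out as the main obstacle and plan to treat it with a Sathe--Selberg theorem for polynomials having only large prime factors. The paper never needs this. Since $\gamma_l=(1-\tfrac{l}{l^2-1})+\tfrac{1}{l}\cdot\tfrac{1}{l}+\tfrac{1}{l^3-l}\cdot\tfrac{1}{l^2}$ is exactly the Chebotarev mean of $l^{-i(g)}$ in $K(E[l])/K$, Theorem \ref{thm:chebotarev} lets it replace $l^{-\omega'(h)}$ by $\gamma_l^{\omega(h)}$. The total relative cost is $O(q^{-1/2})$: the deviations $l^{-i(g)}-\gamma_l$ cancel over all places of each degree, so nothing compounds over the factors of $h$. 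After that, every $f$ with $\omega(f)=k$ carries weight $\gamma_l^{k-1}$, Theorem \ref{thm:SatheSelberg} applies to the unrestricted count, and $q\gg(\log n)^2$ absorbs the $q^{-1/2}$.

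This is why the paper's constant is $G(\gamma_l)\bigl(\mathbb{E}[\beta(\delta_1,l,1)]-(l+1)\bigr)$, while yours keeps the exact small-place Euler factors. The two differ by $\prod_{\deg g\le\mathfrak{n}}\frac{1+l^{-i(g)}q^{-\deg g}}{1+\gamma_l q^{-\deg g}}=1+O(q^{-1/2})$. You should state this explicitly to match Theorem \ref{thm:main}.

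Your own Selberg--Delange route also removes the obstacle, with no rough-number count needed. The weight $\prod_{g\mid f_*}l^{-i(g)}\cdot\gamma_l^{\omega(f^*)}$ is multiplicative, so its generating function is $(1-qu)^{-\gamma_l}H_n(u)$. By the same Chebotarev input, $H_n$ is analytic in $|u|<q^{-1/2}$ and bounded on $|u|\le q^{-1/2}/2$ uniformly in $n$ and $q$. Coefficient extraction then gives the main term with relative error $O(1/n)$ and no window at all. That route buys an exact constant and a better error, at the price of proving a function-field Selberg--Delange lemma. The paper's route buys brevity by using Theorem \ref{thm:SatheSelberg} as a black box.

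Finally, there is no ``factor $n$ from the choice of the large place''. Dividing the Selberg--Delange asymptotic by $q^n$ already gives $n^{\gamma_l-1}$. The auxiliary place contributes only the factor $\gamma_l^{-1}$, which turns $1/\Gamma(\gamma_l)$ into $1/\Gamma(1+\gamma_l)$.
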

\begin{proof}
    We use the first statement of Lemma \ref{lemma:convergence}, Proposition \ref{prop:Fn23-contribution}, Theorem \ref{thm:chebotarev} over a given $\mathrm{SL}_2(\mathbb{F}_l)$ Galois extension $K(E[l])/K$, and Theorem \ref{thm:SatheSelberg} to obtain
    \begin{align} \label{eqn:m0-part1}
    \begin{split}
        & \hspace{15pt} \sum_{f \in F_n^{main,0}(\mathbb{F}_q)} (\# \mathrm{Sel}_{1-\sigma_{l,f}}(A_f/K) - (l+1)) \\
        &= \sum_{k = R_2}^{R_1} \left( 1 - \left(\frac{l}{l^2-1} \right)^k \right) \cdot \frac{q^n}{n} \cdot \frac{(\log n)^{k-1}}{(k-1)!} \cdot \Biggl(G((k-1)/\log n) \cdot (\mathbb{E}[\beta(\delta_1,l,1)] - (l+1)) \cdot \gamma_l^{k-1} \\
        & \hspace{210pt} + O_{E,l} \left( \log n \cdot n^{\frac{3 \# \Sigma_E \log l}{\log \log \log n}} \cdot \gamma_l^{k-\omega(h)-1} \cdot q^{-1/2} \right) \\
        &\hspace{210pt} + O_{E,l} \left( \frac{k}{(\log n)^2} \cdot \gamma_l^{k-1} \right) \Biggl).
    \end{split}
    \end{align}
    Note that we use effective Chebotarev density theorem to quantify the proportion of irreducible factors of $h \in \mathcal{Q}(n)$ lying in $\mathcal{P}_0, \mathcal{P}_1$, and $\mathcal{P}_2$, which subsumes the exponent $\omega'(h)$ appearing in the statement of Lemma \ref{lemma:init-prob} as an exponent of $\gamma_l$. The resulting error term, which is of order $O(q^{-1/2})$, is subsumed into either one of the error terms, because $q > (\log n)^2$ by conditions on lower bounds for $n$ and $q$. We also note that all the other error terms appearing in Proposition \ref{prop:Fn23-contribution} and Theorem \ref{thm:chebotarev} are subsumed into two error terms. 

    The main term of equation (\ref{eqn:m0-part1}) can be written as
    \begin{align*}
        &= \sum_{k=R_2}^{R_1} \frac{q^n}{n} \cdot \frac{(\log n)^{k-1}}{(n-1)!} G((k-1)/\log n) \cdot (\mathbb{E}[\beta(\delta_1,l,1)] - (l+1)) \cdot \gamma_l^{k-1} \\
        &= \frac{q^n}{n} \cdot (\mathbb{E}[\beta(\delta_1,l,1)] - (l+1)) \cdot \left[ \sum_{k=R_2}^{R_1} \frac{(\gamma_l \log n)^{k-1}}{(k-1)!} \cdot G((k-1)/\log n) \right].
    \end{align*}
    Using Stirling's approximation, we obtain the following expression for any $\epsilon > 0$, where the implied constant depends on $\epsilon$.
    \begin{align*}
        &= \frac{q^n}{n} \cdot (\mathbb{E}[\beta(\delta_1,l,1)] - (l+1)) \cdot \left[ \sum_{k = \lfloor \gamma_l \log n - (\log n)^{\frac{1}{2} + \epsilon} \rfloor}^{\lceil \gamma_l \log n + (\log n)^{\frac{1}{2} + \epsilon} \rceil} \frac{(\gamma_l \log n)^{k-1}}{(k-1)!} \cdot G((k-1)/\log n) \right] \\
        &\hspace{20pt} \cdot (1 + O_\epsilon((\log n) \cdot \exp({-(\log n)^\epsilon}/2))).
    \end{align*}
    For any $c \in (0,e^{4 + 3 \log l})$ and any $1 \leq k \leq (\log n)^{1/2 + \epsilon}$ for any small enough $\epsilon > 0$, we have
    \begin{align}
        \begin{split}
            \frac{G \left(c + k/\log n \right)}{G(c)} &= \frac{\Gamma(c+1)}{\Gamma(c+1+k/\log n)} \cdot \prod_{g \in \mathcal{P}} \left(1 - \frac{1}{q^{\deg g}} \right)^{\frac{k}{\log n}} \cdot \left(1 + \frac{k/\log n}{q^{\deg g} + c} \right) \\
            &= 1 + O_{c,\epsilon}((\log n)^{-1/2+\epsilon}),
        \end{split}
    \end{align}
    with implied constant depending on $c$ and $\epsilon$ but can be chosen to be independent of $n$ and $q$. Combining the above expression with Stirling's approximation, we can rewrite the following expression as follows:
    \begin{align*}
        \begin{split}
            & \hspace{15pt} \sum_{k = \lfloor \gamma_l \log n - (\log n)^{\frac{1}{2} + \epsilon} \rfloor}^{\lceil \gamma_l \log n + (\log n)^{\frac{1}{2} + \epsilon} \rceil} \frac{(\gamma_l \log n)^{k-1}}{(k-1)!} \cdot G((k-1)/\log n) \\
            &= \sum_{k = \lfloor \gamma_l \log n - (\log n)^{\frac{1}{2} + \epsilon} \rfloor}^{\lceil \gamma_l \log n + (\log n)^{\frac{1}{2} + \epsilon} \rceil} \frac{(\gamma_l \log n)^{k-1}}{(k-1)!} \cdot G(\gamma_l) \cdot (1 + O_{\epsilon}((\log n)^{-1/2+\epsilon})) \\
            &= n^{\gamma_l} \cdot G(\gamma_l) \cdot (1 + O_{\epsilon}((\log n)^{-1/2+\epsilon})).
        \end{split}
    \end{align*}
    We note that the error terms from Taylor expansion of the exponential function gets subsumed into the error terms. Hence, the main term of equation (\ref{eqn:m0-part1}) can be rewritten as
    \begin{align*}
        &= \frac{q^n}{n} \cdot (\mathbb{E}[\beta(\delta_1,l,1)] - (l+1)) \cdot \left[ n^{\gamma_l} \cdot G(\gamma_l) \cdot (1 + O_\epsilon((\log n)^{-1/2 + \epsilon})) \right] \cdot (1 + O_\epsilon(e^{-\frac{1}{2} \cdot (\log n)^\epsilon})) \\
        &= q^n \cdot G(\gamma_l) \cdot (\mathbb{E}[\beta(\delta_1,l,1)] - (l+1)) \cdot n^{\gamma_l - 1} \cdot (1 + O_\epsilon((\log n)^{-1/2 + \epsilon})).
    \end{align*}

    By using the condition that $q > \frac{n^{\frac{8 \# \Sigma_E \log l - 2}{\log \log \log n} + 2 \epsilon}}{(\log n)^2}$ to ensure that the other error terms are strictly smaller than the main term, we obtain the expression
    \begin{equation}
        (\ref{eqn:m0-part1}) = q^n \cdot G(\gamma_l) \cdot (\mathbb{E}[\beta(\delta_1,l,1)] - (l+1)) \cdot n^{\gamma_l-1} \cdot (1 + O_{\epsilon}((\log n)^{-1/2 + \epsilon})).
    \end{equation}
    Using the fact that $\# F_n(\mathbb{F}_q) = q^n - q^{n-1} = q^n(1-1/q)$, we finish the proof of the proposition by identifying $\beta_{E,l} = G(\gamma_l) \cdot (\mathbb{E}[\beta(\delta_1,l,1)] - (l+1))$. Note that when we divide equation (\ref{eqn:m0-part1}) by $\# F_n(\mathbb{F}_q)$, we obtain the term $\frac{q}{q-1} = 1 + \frac{1}{q-1}$. The condition that $q > \frac{n^{\frac{8 \# \Sigma_E \log l - 2}{\log \log \log n} + 2 \epsilon}}{(\log n)^2}$ ensures that the term $\frac{1}{q-1}$ can be subsumed inside the error term $O_{\epsilon}((\log n)^{-1/2 + \epsilon})$.
\end{proof}
Next, we compute the contribution of rate of convergence over the set $F_n^{main,1}(\mathbb{F}_q)$.
\begin{proposition} \label{prop:convergence-m1}
    Assume conditions from Lemma \ref{lemma:convergence}. Suppose that $n$ and $q$ satisfy the same conditions as in Proposition \ref{prop:convergence-m0}. Then the following equation holds with explicitly computable implied constants.
    \begin{equation}
        \Biggl| \frac{\sum_{f \in F_n^{main,1}(\mathbb{F}_q)} (\# \mathrm{Sel}_{1-\sigma_{l,f}}(A_f/K) - (l+1))}{\# F_n(\mathbb{F}_q)} \Biggr| \asymp n^{\frac{l^3-l+1}{l^5} - 1}.
    \end{equation}
\end{proposition}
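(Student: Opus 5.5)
We follow the template of the proof of Proposition \ref{prop:convergence-m0}, with the second statement of Lemma \ref{lemma:convergence} as input in place of the first. We split $F_n^{main,1}(\mathbb{F}_q)$ as $\bigsqcup_{h}\bigsqcup_{n_0}F_n^{main,1,\langle h,n_0\rangle}(\mathbb{F}_q)$ and further by $\omega(f)=k$ with $R_2<k\le R_1$. Lemma \ref{lemma:convergence} gives the average of $\#\mathrm{Sel}-(l+1)$ on each piece as
\[
(\mathbb{E}[\beta(\delta_1,l,1)]-(l+1))\,\gamma_{l,1}^{k-\omega(h)-1}\,l^{-\omega'(h)-1}
\]
plus an error $O(\log n\cdot n^{3\#\Sigma_E\log l/\log\log\log n}\gamma_{l,1}^{k-\omega(h)-1}q^{-1/2})$. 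The key structural input is the count of the pieces themselves. A polynomial in $F_n^{main,1}$ with $\omega(f)=k$ has all of its $\ge k-R_2$ large-degree factors in $\mathcal{P}_1\cup\mathcal{P}_2$ and at least one in $\mathcal{P}_1$. By Theorem \ref{thm:chebotarev} applied to $K(E[l])/K$, the proportion of places in $\mathcal{P}_1\cup\mathcal{P}_2$ is a fixed constant $\theta_l<1$ (the density of $\mathrm{SL}_2(\mathbb{F}_l)$-elements with a nonzero fixed vector), up to $O(q^{-d/2})$. Combined with Theorem \ref{thm:SatheSelberg}, this gives
\[
\#\{f\in F_n^{main,1}:\omega(f)=k\}\approx \tfrac{q^n}{n}\tfrac{(\log n)^{k-1}}{(k-1)!}\,G\!\left(\tfrac{k-1}{\log n}\right)\theta_l^{\,k-1}
\]
up to bounded factors. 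The small-degree part $h$ is handled exactly as in Proposition \ref{prop:convergence-m0}: Chebotarev absorbs $l^{-\omega'(h)}$ into a bounded multiplicative constant.

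Next we sum over $k$. The main term becomes
\[
\frac{q^n}{n}\,(\mathbb{E}[\beta(\delta_1,l,1)]-(l+1))\sum_{k}\frac{(\theta_l\gamma_{l,1}\log n)^{k-1}}{(k-1)!}\,G\!\left(\tfrac{k-1}{\log n}\right),
\]
up to a positive bounded factor. As before, we localize with Stirling to the window $|k-\theta_l\gamma_{l,1}\log n|\le(\log n)^{1/2+\epsilon}$. There $G$ is essentially constant, equal to $G(\theta_l\gamma_{l,1})>0$, and the sum is $\asymp n^{\theta_l\gamma_{l,1}}$. We must then check that the resulting exponent agrees with $\frac{l^3-l+1}{l^5}$, i.e.\ that $\theta_l\gamma_{l,1}=\gamma_{l,1}/l$ once the conditioning weights $\frac{l^2-1}{l^2},\frac1{l^2}$ built into $M_1$ and the relative density of $\mathcal{P}_1\cup\mathcal{P}_2$ are combined. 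This identification of the effective per-factor weight is the step we expect to be the main obstacle. We would first try to compute it directly from the $\mathrm{SL}_2(\mathbb{F}_l)$ class counts. If it comes out as a different constant, the bookkeeping of the restriction to no $\mathcal{P}_0$-factors must be redone so that the exponent matches $\gamma_{l,1}/l$.

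For the two-sided $\asymp$ we need the main coefficient to be nonzero with a fixed sign. Lemma \ref{lemma:init-prob} gives that $\mathbb{E}[\beta(\delta_h,l,1)]-(l+1)$ has the same sign as $\mathbb{E}[\beta(\delta_1,l,1)]-(l+1)\neq0$. The recursions (\ref{eq:lemma:convergence1}) and (\ref{eq:lemma:convergence2}) preserve the sign of the deviation from $l+1$, since both have positive slopes and fixed point $l+1$. Hence every piece contributes with the same sign and no cancellation occurs across $h,n_0,k$. The lower and upper bounds then both follow from the localized sum, with the same bounded factors as above. Finally, the error terms are handled as in Proposition \ref{prop:convergence-m0}. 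The assumed lower bound on $q$ makes the $q^{-1/2}$ terms and the Sathe--Selberg $O(k/(\log n)^2)$ term smaller than the main term. Proposition \ref{prop:Fn23-contribution} controls the complementary ranges $k\le R_2$ and $k>R_1$. Dividing by $\#F_n(\mathbb{F}_q)=q^n(1-1/q)$ then yields the order $n^{\frac{l^3-l+1}{l^5}-1}$.
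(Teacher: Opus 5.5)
Your plan follows the paper's proof: decompose by $h$, $n_0$ and $\omega(f)=k$; insert the second statement of Lemma \ref{lemma:convergence}; weight by the proportion of $f$ whose factors all lie in $\mathcal{P}_1\cup\mathcal{P}_2$; localize the $k$-sum by Stirling. The step you flag as the main obstacle is a genuine gap, and it cannot be closed. Since $\mu_l\subset K$, the image of Galois on $E[l]$ is exactly $\mathrm{SL}_2(\mathbb{F}_l)$. An element of $\mathrm{SL}_2(\mathbb{F}_l)$ that fixes a nonzero vector is unipotent, and there are $l^2$ of these: the identity and $l^2-1$ nontrivial unipotents. Hence
\[
\theta_l=\frac{l^2}{l^3-l}=\frac{1}{l}+\frac{1}{l^3-l}=\frac{l}{l^2-1},
\]
which is the total weight of $M_L$ and $M_L^2$ in $M$. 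Indeed the weights of $M_1$ are exactly $\frac{l^2-1}{l^2}=\frac{1/l}{\theta_l}$ and $\frac{1}{l^2}=\frac{1/(l^3-l)}{\theta_l}$. Therefore
\[
\theta_l\gamma_{l,1}=\frac{1}{l}\cdot\frac{1}{l}+\frac{1}{l^3-l}\cdot\frac{1}{l^2}=\frac{l^3-l+1}{l^3(l^2-1)},
\]
which is strictly larger than $\gamma_{l,1}/l=\frac{l^3-l+1}{l^5}$ (for $l=2$: $7/24$ against $7/32$). You can cross-check this against the paper's own slope: $\gamma_l=(1-\theta_l)+\theta_l\gamma_{l,1}$ holds exactly with this $\theta_l$. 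This number is just the average over primes of the multiplicative weight $\mathbf{1}_{\mathcal{P}_1}/l+\mathbf{1}_{\mathcal{P}_2}/l^2$. So no rearrangement of the no-$\mathcal{P}_0$ restriction can turn it into $\gamma_{l,1}/l$, and \emph{redoing the bookkeeping so that the exponent matches} is not a proof step.

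The paper reaches $\frac{l^3-l+1}{l^5}$ only because, in its expansion (\ref{eqn:m1-part1}), the proportion of $\{f:\omega(f)=k\}$ lying in $F_n^{main,1}(\mathbb{F}_q)$ is written as $\frac{1}{l^k}-\bigl(\frac{1}{l^3-l}\bigr)^k$. That is, the density $1/l$ of $\mathcal{P}_1$ alone is used in place of $\theta_l$. This conflicts with the weights of $M_1$. It also conflicts with Proposition \ref{prop:convergence-m0}, where the factor $1-\bigl(\frac{l}{l^2-1}\bigr)^k$ uses the correct $\theta_l$. The closing remark's $\gamma_l^*(p_1,p_2)$ has the same slip; done consistently it would be $p_1/l+p_2/l^2$.

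Your argument therefore cannot deliver the displayed estimate. Carried out honestly, and granting Lemma \ref{lemma:convergence}, it gives order $n^{\frac{l^3-l+1}{l^3(l^2-1)}-1}$, which is incompatible with the upper half of the claimed $\asymp$. This does not affect Theorem \ref{thm:main}, since the corrected exponent is still far below $\gamma_l$.

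The rest of your outline is sound. Your sign-preservation argument for the lower bound (Lemma \ref{lemma:init-prob}, plus positive slopes and fixed point $l+1$ in both recursions) is correct and more than the paper supplies. One minor correction: averaging $l^{-\omega'(h)}$ over $h$ does not give a bounded constant, since $\omega(h)$ can be as large as $R_2$. It contributes $\gamma_{l,1}^{\omega(h)}$ to the exponent, which is how your displayed sum actually treats it.
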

\begin{proof}
    We proceed as in the proof of Proposition \ref{prop:convergence-m1}. Recall the constant $\gamma_{l,1} = \frac{l^3 - l + 1}{l^4}$ from the statement of Lemma \ref{lemma:init-prob}. By the second equation in Lemma \ref{lemma:convergence}, we have
    \begin{align} \label{eqn:m1-part1}
    \begin{split}
        & \hspace{15pt} \sum_{f \in F_n^{main,1}(\mathbb{F}_q)} (\# \mathrm{Sel}_{1-\sigma_{l,f}}(A_f/K) - (l+1)) \\
        &= \sum_{k = R_2}^{R_1} \left( \frac{1}{l^k} - \left(\frac{1}{l^3-l} \right)^k \right) \cdot \frac{q^n}{n} \cdot \frac{(\log n)^{k-1}}{(k-1)!} \cdot \Biggl( G\left(\frac{k-1}{\log n} \right) \cdot (\mathbb{E}[\beta(\delta_1,l,1)] - (l+1)) \cdot \frac{\gamma_{l,1}^{k-1}}{l} \\
        & \hspace{210pt} + O_{E,l} \left( \log n \cdot n^{\frac{3 \# \Sigma_E \log l}{\log \log \log n}} \cdot \gamma_l^{k-\omega(h)-1} \cdot q^{-1/2} \right) \\
        &\hspace{210pt} + O_{E,l} \left( \frac{k}{(\log n)^2} \cdot \gamma_l^{k-1} \right) \Biggl).
    \end{split}
    \end{align}
    Unlike the proof of Proposition \ref{prop:convergence-m0}, however, we use effective Chebotarev density theorem to quantify the conditional proportion of irreducible factors of $h \in \mathcal{Q}(n)$, all of whose irreducible factors lie in either $\mathcal{P}_1$ or $\mathcal{P}_2$. The main contributing term originates from the first term. We can still use partial summation to obtain the statement of the proposition. In fact, one can adapt the proof of Proposition \ref{prop:convergence-m0} to obtain that the explicit coefficient for the secondary term is equal to $\frac{1}{l^2} \cdot G(\frac{l^3-l+1}{l^5}) \cdot (\mathbb{E}[\beta(\delta_1,l,1)] - (l+1))$.
\end{proof}
Lastly, we compute the contribution of rate of convergence over the set $F_n^{[1]}(\mathbb{F}_q)$.
\begin{proposition} \label{prop:convergence-[1]}
    Assume conditions from Lemma \ref{lemma:convergence}. Suppose that $n$ and $q$ satisfy the same conditions as in Proposition \ref{prop:convergence-m0}. Then the following inequality holds with explicitly computable implied constants.
    \begin{equation}
        \Biggl| \frac{\sum_{f \in F_n^{[1]}(\mathbb{F}_q)} \# \mathrm{Sel}_{1-\sigma_{l,f}}(A_f/K)}{\# F_n(\mathbb{F}_q)} \Biggr| \ll n^{1/(l^3-l) - 1}.
    \end{equation}
\end{proposition}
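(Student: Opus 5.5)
We follow the pattern of Propositions \ref{prop:convergence-m0} and \ref{prop:convergence-m1}, starting from the third inequality of Lemma \ref{lemma:convergence}. For each $h \in \mathcal{Q}(n)$, each $n_0$ and each $R_2 < k \leq R_1$, it bounds the average of $\#\mathrm{Sel}_{1-\sigma_{l,f}}(A_f/K)$ over $\{f \in F_n^{[1],\langle h,n_0\rangle}(\mathbb{F}_q) : \omega(f)=k\}$ by a constant depending only on $E$ and $l$. The bound is roughly $l^2\bigl((l+1) + |\mathbb{E}[\beta(\delta_1,l,1)]-(l+1)|\bigr)$ plus an error that is $o(1)$ under the hypothesis on $q$. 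Here we use $\gamma_l<1$ and $l^{-(2k-2\omega(h)+\omega'(h)-2)} \leq 1$. So no cancellation is needed: the whole statement reduces to an upper bound on $\#F_n^{[1]}(\mathbb{F}_q)/\#F_n(\mathbb{F}_q)$. Summing over $h$, $n_0$ and $k$, the contribution is at most a constant times the number of $f$ with $R_2<\omega(f)\le R_1$ all of whose irreducible factors of degree $>\mathfrak{n}$ lie in $\mathcal{P}_2$.

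Next we count these $f$ weighted by $\omega(f)$. By Theorem \ref{thm:chebotarev} applied to $K(E[l])/K$, whose Galois group contains $\mathrm{SL}_2(\mathbb{F}_l)$, a place of large degree lies in $\mathcal{P}_2$ (Frobenius trivial on $E[l]$) with relative density $1/\#\mathrm{GL}_2(\mathbb{F}_l)$ or $1/\#\mathrm{SL}_2(\mathbb{F}_l)=1/(l^3-l)$, up to $O(q^{-d/2})$. In the latter case (using $\mu_l\subset K$), the Dirichlet series for polynomials supported on $\mathcal{P}_2$ behaves like $\zeta_K(s)^{1/(l^3-l)}$. Mirroring Theorem \ref{thm:SatheSelberg}, the count with $\omega(f^*)=k'$ is therefore
\[
\frac{q^n}{n}\cdot\frac{(\log n/(l^3-l))^{k'-1}}{(k'-1)!}\cdot O(1),
\]
with $h=f_*$ contributing a bounded factor after summing over $\mathcal{Q}(n)$, exactly as in equation (\ref{eqn:m0-part1}). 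Summing over $k'$ gives $q^n n^{1/(l^3-l)-1}$, i.e.\ $\#F_n^{[1]}(\mathbb{F}_q)/\#F_n(\mathbb{F}_q) \ll n^{1/(l^3-l)-1}$. The Chebotarev error terms are absorbed because $\deg g > \mathfrak{n}$ gives savings $n^{-2\log n}$ as in Proposition \ref{prop:cheb1}, while $R_1=O(\log n)$ factors cost only $n^{O(1)}$ at worst.

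The main obstacle is this second step: getting a Sathe–Selberg-type upper bound for polynomials whose large-degree prime factors are restricted to the Chebotarev set $\mathcal{P}_2$, uniformly in $q$. We would try the Rankin-type argument behind Theorem \ref{thm:LDP}. Write the count as a coefficient of $\prod_{g\in\mathcal{P}_2}(1+z u^{\deg g})$, bound the sum $\sum_{g \in \mathcal{P}_2(d)} u^{d}$ using Theorem \ref{thm:chebotarev} by $\frac{1}{l^3-l}\sum_d \frac{(qu)^d}{d}+O(1)$, and extract coefficients with $u=q^{-1}(1-1/n)$. This yields a bound with $\log n$ replaced by $\frac{\log n}{l^3-l}+O(1)$. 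Summing $\frac{(\log n/(l^3-l)+O(1))^{k-1}}{(k-1)!}$ over $k$ then gives $O(n^{1/(l^3-l)})$. If $\mathrm{Gal}(K(E[l])/K)$ is all of $\mathrm{GL}_2$ the exponent only improves, so the stated bound still holds.
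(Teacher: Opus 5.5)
Your first step matches the paper. The third inequality of Lemma \ref{lemma:convergence} bounds the average of $\#\mathrm{Sel}_{1-\sigma_{l,f}}(A_f/K)$ on each piece by $O_{E,l}(1)$, so everything reduces to showing $\#F_n^{[1]}(\mathbb{F}_q)\ll q^n n^{1/(l^3-l)-1}$.

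The gap is in the step you flag as the main obstacle. The Rankin extraction you describe does not produce the shape $\frac{q^n}{n}\frac{(\cdot)^{k-1}}{(k-1)!}$ of Theorem \ref{thm:LDP}. With $S(u)=\sum_{g\in\mathcal{P}_2}u^{\deg g}$ and $u=q^{-1}(1-1/n)$ it gives
\[
\#\{f:\deg f=n,\ \omega(f)=k,\ \text{all factors in }\mathcal{P}_2\}\le u^{-n}\frac{S(u)^k}{k!}\le 4q^n\,\frac{\bigl(\frac{\log n}{l^3-l}+O(1)\bigr)^k}{k!}.
\]
There is no factor $1/n$, and summing over $k$ yields only $O(q^n n^{1/(l^3-l)})$, which is worse than trivial. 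The loss does not come from your particular choice of $u$; it is built into Rankin's trick here. Summed over $k$, the bound is $u^{-n}\prod_{g\in\mathcal{P}_2}(1+u^{\deg g})$, which is $\gg q^n n^{1/(l^3-l)}$ for every $u$. That is a full factor $n$ above what you need. Recovering the $1/n$ needs a genuinely different argument. One option is a Hardy--Ramanujan-type induction on $k$ starting from $kN_k(n)\le\sum_{d<n}\#\mathcal{P}_2(d)\,N_{k-1}(n-d)$; another is a Selberg--Delange contour-integral analysis.

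There is also a second, smaller leak: relaxing to ``factors of degree $>\mathfrak{n}$ lie in $\mathcal{P}_2$'' is not harmless.
\begin{itemize}
\item If $f_*=h$ is unconstrained, then $\sum_h q^{-\deg h}\asymp\mathfrak{n}$. So $h$ does not contribute a bounded factor, and the relaxed set is larger than $F_n^{[1]}(\mathbb{F}_q)$ by a power of $\log n$.
\item You must keep the condition that the factors of $h$ also lie in $\mathcal{P}_2$. Then the count of $f^*$ involves $\log(n/\mathfrak{n})$ rather than $\log n$, and this is exactly what absorbs the factor $\mathfrak{n}^{1/(l^3-l)}$ coming from the $h$-sum.
\end{itemize}

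The paper sidesteps both issues by repeating the computation of Proposition \ref{prop:convergence-m0}.
\begin{itemize}
\item It takes the Sathe--Selberg count $\frac{q^n}{n}\frac{(\log n)^{k-1}}{(k-1)!}G(\frac{k-1}{\log n})$ of all $f$ with $\omega(f)=k$ (Theorem \ref{thm:SatheSelberg}; Theorem \ref{thm:LDP} suffices for an upper bound).
\item It multiplies this by the Chebotarev proportion $(l^3-l)^{-k}$ that all $k$ factors lie in $\mathcal{P}_2$; the errors are harmless for $q$ as large as assumed.
\item It then uses $\sum_k (l^3-l)^{-k}\frac{(\log n)^{k-1}}{(k-1)!}\ll n^{1/(l^3-l)}$.
\end{itemize}
In this way the factor $1/n$ is inherited from the Sathe--Selberg count instead of having to be rebuilt for $\mathcal{P}_2$.
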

\begin{proof}
    We proceed as in the proof of Proposition \ref{prop:convergence-m0}, but use the last inequality in the statement of Lemma \ref{lemma:convergence}.
\end{proof}

We are now ready to state and prove the main result of the paper on determining the rate of convergence of dimensions of Selmer groups of prime cyclic twist families of some elliptic curves over $K$. Note that the simplified version of the main theorem as stated in the introduction can be obtained by choosing $l \geq 5$ and applying Remark \ref{rem:l5}.
\begin{theorem} \label{thm:main}
    Let $E$ be an elliptic curve satisfying Condition \ref{condition}. Recall the quantity $\mathbb{E}[\beta(\delta_1,l,1)]$ defined as (see Definition \ref{defn:initial-prob} and Remark \ref{rem:init-prob})
    \begin{equation}
        \mathbb{E}[\beta(\delta_1,l,1)] := \sum_{J=0}^\infty l^J \cdot \mathbb{P}[\delta_1 = J] = \lim_{n \to \infty} \sum_{J = 0}^\infty l^J \cdot \frac{\#\{f \in \mathcal{P}_0(n) : \dim_{\mathbb{F}_l} \mathrm{Sel}_{1-\sigma_{l,f}}(A_f/K) = J\}}{\# \mathcal{P}_0(n)}.
    \end{equation}
    If $l = 2, 3, $ or $5$, suppose that $E$ additionally satisfies 
    \begin{equation} \label{eq:main_thm_0}
        \mathbb{E}[\beta(\delta_1,l,1)] \neq l+1.
    \end{equation}
    Suppose that $n$ and $q$ satisfy the following conditions for some $\epsilon' > 0$:
    \begin{align} \label{eq:main_thm_1}
    \begin{split}
            n &> \exp(\exp(\exp(\exp({e + 2 \log l})))), \\
            q &> \max \left(\frac{n^{\frac{8 \# \Sigma_E \log l - 2}{\log \log \log n} + 2 \epsilon'}}{(\log n)^2}, (R_2 + \deg \Delta_E)^2, \exp({24 \log l + 4 \log D_{E,l} + (12 \log l) \cdot \# \Sigma_E}) \right).
    \end{split}
    \end{align}
    Recall the function $G(\cdot)$ from the analogue of Sathe-Selberg theorem (Theorem \ref{thm:SatheSelberg}). We set 
    \begin{equation} \label{eq:main_thm_2}
        \beta_{E,l} := G \left(1 - \frac{l^2-l+1}{l^3} \right) \cdot (\mathbb{E}[\beta(\delta_1,l,1)] - (l+1)) \neq 0.
    \end{equation}
    Choose $\epsilon > 0$ to be a small enough number. Then the following equation holds with implied constants depending on $E$, $l$, and $\epsilon$.
    \begin{equation}
        \frac{\sum_{f \in F_n(\mathbb{F}_q)} \# \mathrm{Sel}_{1-\sigma_{l,f}}(A_f/K) - (l+1)}{\# F_n(\mathbb{F}_q)} = \beta_{E,l} \cdot n^{-\frac{l^2 - l + 1}{l^3}} + O_{E,l,\epsilon} \left((\log n)^{-1/2 + \epsilon} \cdot n^{-\frac{l^2 - l + 1}{l^3}} \right).
    \end{equation}
\end{theorem}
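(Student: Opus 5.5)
The plan is to assemble the theorem from the four-way decomposition $F_n(\mathbb{F}_q) = F_n^{main,0} \sqcup F_n^{main,1} \sqcup F_n^{[1]} \sqcup (F_n^{[2]}\cup F_n^{[3]})$. We take $R_1 = e^{4+3\log l}\log n$ and $R_2 = \lfloor \log n/\log\log\log n\rfloor$, i.e. the choices of Proposition \ref{prop:Fn23-contribution} with $d=1$. We then write
\[
\frac{\sum_{f\in F_n(\mathbb{F}_q)} (\#\mathrm{Sel}_{1-\sigma_{l,f}}(A_f/K)-(l+1))}{\#F_n(\mathbb{F}_q)}
\]
as the sum of the four normalized contributions. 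The hypotheses (\ref{eq:main_thm_1}) are exactly those of Proposition \ref{prop:convergence-m0}. They also imply the hypotheses of Lemma \ref{lemma:convergence} and of Proposition \ref{prop:Fn23-contribution} with $d=1$, since $n > \exp(\exp(\exp(\exp(e+2\log l))))$. So all earlier results are available simultaneously.

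The main term comes from Proposition \ref{prop:convergence-m0}. It gives precisely $\beta_{E,l}\, n^{-(l^2-l+1)/l^3} + O_\epsilon((\log n)^{-1/2+\epsilon} n^{-(l^2-l+1)/l^3})$, where $\beta_{E,l} = G(\gamma_l)(\mathbb{E}[\beta(\delta_1,l,1)]-(l+1))$ and $\gamma_l - 1 = -(l^2-l+1)/l^3$.

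The remaining three pieces must be absorbed into this error term.

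\begin{itemize}
\item For $F_n^{[2]}\cup F_n^{[3]}$, Proposition \ref{prop:Fn23-contribution} bounds the normalized sum of $\#\mathrm{Sel}$ by $G_{1,E,l}\sqrt{\log n}\, n^{-0.9}$. The subtracted constant $(l+1)$ contributes at most $(l+1)\#(F_n^{[2]}\cup F_n^{[3]})/\#F_n$, which Theorem \ref{thm:exceptional_set} and Theorem \ref{thm:LDP} bound similarly. Since $(l^2-l+1)/l^3 \le 3/8 < 0.9$, this is $o(n^{-(l^2-l+1)/l^3}(\log n)^{-1/2})$.
\item For $F_n^{main,1}$, Proposition \ref{prop:convergence-m1} gives order $n^{\gamma_{l,1}/l - 1}$. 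One checks $\gamma_{l,1}/l = (l^3-l+1)/l^5 < \gamma_l$, so this is a power saving over the secondary term.
\item For $F_n^{[1]}$, Proposition \ref{prop:convergence-[1]} bounds the sum of $\#\mathrm{Sel}$ by $n^{1/(l^3-l)-1}$. The $(l+1)\#F_n^{[1]}$ part is of the same or smaller order, since $F_n^{[1]}$ requires all large factors to lie in $\mathcal{P}_2$ (density $1/(l^3-l)$ per factor). We need $1/(l^3-l) < \gamma_l$, which is immediate for all primes $l$.
\end{itemize}

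Summing these gives the displayed asymptotic. It remains to justify $\beta_{E,l}\neq 0$. First, $G(\gamma_l) > 0$, because $\Gamma(1+z)$ and each Euler factor are positive for $0<z<1$, and the product converges. Second, $\mathbb{E}[\beta(\delta_1,l,1)] \neq l+1$. For $l=2,3,5$ this is the assumption (\ref{eq:main_thm_0}). For $l\ge 7$ we argue as in Remark \ref{rem:l5}: the values of $\delta_1$ have denominators of the form $(l+1)^{\#\Sigma_E}$. If $l-1 \mid (l+1)^{\#\Sigma_E}$ then $l-1 \mid 2^{\#\Sigma_E}$, forcing $l=2^m+1$, which is impossible for a prime $l \ge 7$. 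Hence Corollary \ref{cor:convergence} applies with $k=0$ and gives $\mathbb{E}[\beta(\delta_1,l,1)]\neq l+1$.

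The main obstacle is that the error in Proposition \ref{prop:convergence-m0} is only relative, $(\log n)^{-1/2+\epsilon}$. So one must verify that every other error term is genuinely smaller than $(\log n)^{-1/2+\epsilon} n^{-(l^2-l+1)/l^3}$ uniformly in $q$. The delicate case is the $q^{-1/2}\, n^{3\#\Sigma_E\log l/\log\log\log n}\log n$ Chebotarev errors, whose control is exactly what the lower bound on $q$ in (\ref{eq:main_thm_1}) is designed for. I would check this exponent comparison carefully first. The comparisons of exponents of $n$ for the three error sets are routine.
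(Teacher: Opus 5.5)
Your assembly is essentially the paper's proof. You use the same decomposition of $F_n(\mathbb{F}_q)$, cite Propositions \ref{prop:convergence-m0}, \ref{prop:convergence-m1}, \ref{prop:Fn23-contribution} and the $F_n^{[1]}$ bound the same way, and make the same exponent comparisons. Two of your additions go beyond the paper and are fine:
\begin{itemize}
\item The bookkeeping for the subtracted $(l+1)$ on the error sets is correct. It is simplest via $\#\mathrm{Sel}_{1-\sigma_{l,f}}(A_f/K)\ge 1$, which bounds $(l+1)\#F_n^{[i]}$ by $(l+1)$ times the Selmer sums already controlled, so you do not need the density heuristic.
\item The argument that $G(\gamma_l)>0$ is correct, and the paper does not address it.
\end{itemize}

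The step that fails is the nonvanishing of $\beta_{E,l}$ for $l\ge 7$. Your reduction is right: since $l+1\equiv 2 \pmod{l-1}$, $l-1$ divides $(l+1)^{\#\Sigma_E}$ if and only if $l-1$ divides $2^{\#\Sigma_E}$, i.e.\ $l=2^m+1$ with $m\le \#\Sigma_E$. But this is not impossible for primes $l\ge 7$: the Fermat primes $17=2^4+1$, $257$, $65537$ are exactly such primes.

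Take $l=17$, which Condition \ref{condition} allows whenever $17\mid q-1$, and any $E$ with $\#\Sigma_E\ge 4$. Then $16\mid 18^{\#\Sigma_E}$, so the hypothesis of Corollary \ref{cor:convergence} fails, and nothing in your argument rules out $\mathbb{E}[\beta(\delta_1,l,1)]=l+1$.

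The paper's proof appeals to Remark \ref{rem:l5}, which makes the identical false assertion. So this is a defect in the source, not a divergence from it. It affects the claim $\beta_{E,l}\neq 0$ in (\ref{eq:main_thm_2}), i.e.\ whether the displayed term is genuinely a secondary term. To close the gap, do one of the following:
\begin{itemize}
\item impose (\ref{eq:main_thm_0}) for every Fermat prime $l$ (apparently what the garbled phrase ``$l=2,3,5$ for some non-negative integer $m$'' in the introduction intended); or
\item for $l=2^m+1$, assume $\#\Sigma_E<m$.
\end{itemize}
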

\begin{proof}
    We have
    \begin{align*}
        & \frac{\sum_{f \in F_n(\mathbb{F}_q)} \# \mathrm{Sel}_{1-\sigma_{l,f}}(A_f/K) - (l+1)}{\# F_n(\mathbb{F}_q)} \\
        &= \sum_{j=0}^1 \frac{\sum_{f \in F_n^{main,j}(\mathbb{F}_q)} \# \mathrm{Sel}_{1-\sigma_{l,f}}(A_f/K) - (l+1)}{\# F_n(\mathbb{F}_q)} + \sum_{k=1}^3 \frac{\sum_{f \in F_n^{[k]}(\mathbb{F}_q)} \# \mathrm{Sel}_{1-\sigma_{l,f}}(A_f/K) - (l+1)}{\# F_n(\mathbb{F}_q)}.
    \end{align*}
    By Proposition \ref{prop:convergence-m0}, we have
    \begin{equation}
        \frac{\sum_{f \in F_n^{main,0}(\mathbb{F}_q)} \# \mathrm{Sel}_{1-\sigma_{l,f}}(A_f/K) - (l+1)}{\# F_n(\mathbb{F}_q)} = \beta_{E,l} \cdot n^{-\frac{l^2 - l + 1}{l^3}} + O_{E,l,\epsilon} \left((\log n)^{-1/2 + \epsilon} \cdot n^{-\frac{l^2 - l + 1}{l^3}} \right).
    \end{equation}
    By Proposition \ref{prop:convergence-m1}, we have (where the implied constants are positive)
    \begin{equation}
        \frac{\sum_{f \in F_n^{main,1}(\mathbb{F}_q)} \# \mathrm{Sel}_{1-\sigma_{l,f}}(A_f/K) - (l+1)}{\# F_n(\mathbb{F}_q)} \asymp n^{\frac{l^3-l+1}{l^5} - 1}.
    \end{equation}
    By Proposition \ref{prop:convergence-[1]}, we have (where the implied constants are positive)
    \begin{equation}
        \frac{\sum_{f \in F_n^{[1]}(\mathbb{F}_q)} \# \mathrm{Sel}_{1-\sigma_{l,f}}(A_f/K)}{\# F_n(\mathbb{F}_q)} \ll n^{1/(l^3-l) - 1}.
    \end{equation}
    By Proposition \ref{prop:Fn23-contribution}, we have (where the implied constants are positive)
    \begin{equation}
        \sum_{k=2}^3 \frac{\sum_{f \in F_n^{[k]}(\mathbb{F}_q)} \# \mathrm{Sel}_{1-\sigma_{l,f}}(A_f/K)}{\# F_n(\mathbb{F}_q)} \ll n^{-0.9} \cdot \sqrt{\log n}.
    \end{equation}
    Because $l \geq 2$, it follows that the main term for rate of convergence over the set $F_n^{main,0}(\mathbb{F}_q)$ dominates those over other sets. We note that if $l \geq 7$ then $\beta_{E,l} \neq 0$ for any elliptic curve $E$  satisfying Condition \ref{condition} by Remark \ref{rem:l5}.
\end{proof}

\begin{remark}
    It is a natural follow-up question to ask whether one can obtain secondary terms for higher moments of prime Selmer groups of cyclic prime twist families of $E$. As shown in Remark \ref{remark:generalization_d}, the recursive formula for expressing $d$-th moments of $M^k \delta$ using all moments up to $d$-th moments of $\delta$ further complicates the analysis of rates of convergence. Nevertheless, one may make an educated guess from equation (\ref{eqn:generalization_d}) that the secondary term for $d$-th moments of $\mathrm{Sel}_{1-\sigma_f}(A_f/K)$ may be also of order $O \left(n^{-\frac{l^2 - l + 1}{l^3}} \right)$. This is because the dominant geometric rate in equation (\ref{eqn:generalization_d}) would be equal to $\gamma_{l,1} = \gamma_l$ assuming that $$\kappa_1(k,l,d) \cdot \mathbb{E}[\beta(\delta,l,1)] - \prod_{m=1}^d(l^m+1) \neq 0, \hspace{20pt} \text{ for sufficiently large } k.$$ It would be an interesting question to explore whether the secondary terms for $d$-th moments of Selmer groups are indeed of order $O\left(n^{-\frac{l^2 - l + 1}{l^3}}\right)$.
\end{remark}

\begin{remark}
    Theorem \ref{thm:main} can be generalized to twist families of Selmer structures $\mathrm{Sel}(T, \alpha : \overline{\chi})$ given an alternating admissible Galois representation $T$, twisting data $\alpha$, and restriction of characters $\overline{\chi} \in \frac{\mathrm{Hom}(\mathrm{Gal}(\overline{K}/K), \mu_\ell)}{\mathrm{Aut}(\mu_\ell)}$ \cite[Section 2]{Park25-1}. We recall the definition of an alternating admissible Galois representation $T$ (see \cite[Condition 2.1]{Park25-1}):
    \begin{itemize}
        \item $T$ is a 2-dimensional $\mathbb{F}_l$ vector space.
        \item There exists a $\Gal(\overline{K}/K)$-equivariant alternating non-degenerate pairing $e_T: T \times T \to \mu_l$ which induces a symmetric pairing $H^1_{\et}(K_v, T) \times H^1_{\et}(K_v, T) \to H^2_{\et}(K_v, T \otimes T) \to \mathbb{F}_\ell$.
        \item The constant field of $K(T)$ is equal to $\mathbb{F}_q$.
        \item $T$ is a simple $\Gal(\overline{K}/K)$-module with $\mathrm{Hom}_{\Gal(\overline{K}/K)}(T,T) = \mathbb{F}_l$ and $H^1(\Gal(K(T)/K),T) = 0$.
        \item There exists a place $v$ of $K$ such that $T^{\Gal(\overline{K}_v/K_v\overline{\mathbb{F}}_q)} \cap \mu_{l^\infty} = \mu_l$.
    \end{itemize}
    Under these conditions, the proof of \cite[Theorem 2.11]{Park25-1} shows that there are explicit constants $p_0, p_1, p_2 \in [0,1]$ with $p_0 + p_1 + p_2 = 1$ such that the Markov operator
    \begin{equation}
        M_T := p_0 \cdot I + p_1 \cdot M_L + p_2 \cdot M_L^2
    \end{equation}
    is the governing stochastic process which determines the distribution of Selmer structures $\mathrm{Sel}(T, \alpha: \overline{\chi})$. Adapting the proof of Theorem \ref{thm:convergence}, we obtain for any probability distribution $\delta: \mathbb{Z}_{\geq 0} \to [0,1]$,
    \begin{equation}
        \mathbb{E}[\beta(M_T \delta, l, 1)] = \left( p_0 + p_1 \cdot \frac{1}{l} + p_2 \cdot \frac{1}{l^2} \right) \cdot \mathbb{E}[\beta(\delta,l,1)] + \left(  p_1 \cdot \frac{l^2-1}{l} + p_2 \cdot \frac{(l^2-1)(l+1)}{l^2} \right).
    \end{equation}
    Denote by $\gamma_l(p_1, p_2)$ and $\kappa_l(p_1,p_2)$ as follows. (Here we substitute $p_0 = 1 - p_1 - p_2$).
    \begin{equation}
        \gamma_l(p_1,p_2) := 1 - p_1 \cdot \frac{l-1}{l} - p_2 \cdot \frac{l^2-1}{l^2}, \hspace{15pt} \kappa_l(p_1,p_2) := p_1 \cdot \frac{l^2-1}{l} + p_2 \cdot \frac{(l^2-1)(l+1)}{l^2}.
    \end{equation}
    Then for any $k \geq 1$, we have
    \begin{equation}
        \mathbb{E}[\beta(M_T^k \delta, l, 1)] = \mathbb{E}[\beta(\delta,l,1)] \cdot \gamma_l(p_1, p_2)^k + \sum_{s=0}^{k-1} \kappa_l(p_1, p_2) \cdot \gamma_l(p_1, p_2)^{s}.
    \end{equation}
    After some calculation one can show that $\kappa_l(p_1,p_2) = (l+1) \cdot (1 - \gamma_l(p_1,p_2))$, hence regardless of values of $p_0, p_1, p_2$, the first moments of Selmer structures $\mathrm{Sel}(T, \alpha : \overline{\chi})$ converges to $l+1$, as one may deduce from \cite[Theorem 2.11]{Park25-1}.
    
    Take $\delta_1$ to be the distribution of Selmer structures
    \begin{equation}
        \delta_1(J) := \frac{\# \{\omega \in \Omega_1 : \dim_{\mathbb{F}_l} \mathrm{Sel}(T, \alpha : \overline{\chi}) = J\}}{\# \Omega_1}.
    \end{equation}
    Suppose that $(\mathbb{E}[\beta(\delta_1,l,1)] - (l+1)) \neq 0$.
    By adapting the strategy of the proof of Theorem \ref{thm:main}, and choosing $n$ and $q$ to be sufficiently large, we can obtain the following generalization:
    \begin{itemize}
        \item The secondary term for first moments of Selmer structures $\mathrm{Sel}(T, \alpha: \overline{\chi})$ over $F_n^{main,0}(\mathbb{F}_q)$ is 
        \begin{align*}
            & \hspace{15pt} G(\gamma_l(p_1,p_2)) \cdot (\mathbb{E}[\beta(\delta_1,l,1)] - (l+1)) \cdot n^{\gamma_{l}(p_1,p_2)-1} \\
            &= G \left(1 - p_1 \cdot \frac{l-1}{l} - p_2 \cdot \frac{l^2-1}{l^2} \right) \cdot (\mathbb{E}[\beta(\delta_1,l,1)] - (l+1)) \cdot n^{-p_1 \cdot \frac{l-1}{l} - p_2 \cdot \frac{l^2-1}{l^2}}.
        \end{align*}
        \item We define the constant
        $$\gamma_l^*(p_1,p_2) := p_1 \cdot \left(\frac{p_1}{p_1+p_2} \cdot \frac{1}{l} + \frac{p_2}{p_1+p_2} \cdot \frac{1}{l^2} \right).$$
        The secondary term for first moments of Selmer structures $\mathrm{Sel}(T, \alpha: \overline{\chi})$ over $F_n^{main,1}(\mathbb{F}_q)$ is $$\frac{p_1}{l} \cdot G \left( \gamma_l^*(p_1,p_2) \right) \cdot \left(\mathbb{E}[\beta(\delta_1,l,1)] - (l+1) \right) \cdot n^{\gamma_l^*(p_1,p_2)- 1}.$$
        \item The first moments of Selmer structures $\mathrm{Sel}(T, \alpha: \overline{\chi})$ over $F_n^{[1]}(\mathbb{F}_q)$ is of order $O(n^{p_2 - 1})$.
        \item The first moments of Selmer structures $\mathrm{Sel}(T, \alpha: \overline{\chi})$ over $F_n^{[2]}(\mathbb{F}_q) \cup F_n^{[3]}(\mathbb{F}_q)$ is of order $O(n^{-1 + \epsilon})$ for small enough $\epsilon > 0$.
    \end{itemize}
    The secondary term is thus determined by comparing the exponents of $n$ for the subsets $F_n^{main,0}(\mathbb{F}_q)$, $F_n^{main,1}(\mathbb{F}_q)$, and $F_n^{[1]}(\mathbb{F}_q)$. Given a small enough $\epsilon > 0$, we denote by $\mathscr{S}(n,p_1,p_2)$ the following expression:
    \begin{itemize}
        \item If $\gamma_l(p_1,p_2) > \max\{\gamma_l^*(p_1,p_2),p_2\}$, then $\mathscr{S}(n,p_1,p_2)$ is equal to
        \begin{equation*}
            G(\gamma_l(p_1,p_2)) \cdot (\mathbb{E}[\beta(\delta_1,l,1)] - (l+1)) \cdot n^{\gamma_l(p_1,p_2)-1}  + O_{T,l,\epsilon}(n^{\gamma_l(p_1,p_2)-1} \cdot (\log n)^{-1/2 + \epsilon}).
        \end{equation*}
        \item If $\gamma_l(p_1,p_2) = \gamma_l^*(p_1,p_2) > p_2$, then $\mathscr{S}(n,p_1,p_2)$ is equal to
        \begin{align*}
            \left( 1 + \frac{p_1}{l} \right) \cdot G(\gamma_l^*(p_1,p_2)) \cdot (\mathbb{E}[\beta(\delta_1,l,1)] - (l+1)) \cdot n^{\gamma_l(p_1,p_2)-1} + O_{T,l,\epsilon}(n^{\gamma_{l}(p_1,p_2) - 1} \cdot (\log n)^{-1/2 + \epsilon}).
        \end{align*}
        \item If $\gamma_l^*(p_1,p_2) < \max\{\gamma_l(p_1,p_2), p_2\}$, then $\mathscr{S}(n,p_1,p_2)$ is equal to
        \begin{equation*}
            \frac{p_1}{l} \cdot G \left( \gamma_l^*(p_1,p_2) \right) \cdot \left(\mathbb{E}[\beta(\delta_1,l,1)] - (l+1) \right) \cdot n^{\gamma_l^*(p_1,p_2)- 1} + O_{T,l,\epsilon}(n^{\gamma_l^*(p_1,p_2) - 1} \cdot (\log n)^{-1/2+\epsilon}).
        \end{equation*}
        \item Otherwise, we have $\mathscr{S}(n,p_1,p_2) = O_{T,l}(n^{p_2-1})$.
    \end{itemize}
    Then the following expression for first moments of $\mathrm{Sel}(T, \alpha : \overline{\chi_f})$ for sufficiently large $n$ and $q$ holds:
    \begin{equation}
        \frac{\sum_{f \in F_n(\mathbb{F}_q)} \# \mathrm{Sel}(T, \alpha : \overline{\chi}_f)}{\# F_n(\mathbb{F}_q)} = (l+1) + \mathscr{S}(n,p_1,p_2).
    \end{equation}
\end{remark}

\begin{remark}
    The recent groundbreaking work by Aaron Landesman and Ishan Levy \cite{LL25} states that the rate of convergence of moments of odd Selmer groups of quadratic twist families of abelian varieties (satisfying some mild conditions) to the Poonen-Rains distribution is of order $O(q^{\frac{-In + J}{2}})$ for some constants $I > 0$ and $J$, assuming that $q$ is chosen to be some sufficiently large but fixed prime power. What causes the difference between the logarithmic secondary term for the first moments of $2$-Selmer groups of quadratic twist families of elliptic curves obtained in this paper and the power saving error term obtained for the first moments of odd Selmer groups of those obtained in \cite{LL25}?

    The key difference lies in determining the image of the local Kummer map $\delta_v: \frac{E_f(K_v)}{mE_f(K_v)} \to H^1_{\et}(K_v, E[m])$ associated to multiplication by $m$ map. When $m$ is coprime to the Tamagawa number of $E$ at $v$, then we may use \cite[Proposition 5.4]{KCesnavicius} to conclude that the image of $\delta_v$ is the unramified cohomology group $H^1_{ur}(K_v, E[m])$. This is the key reason why previous works \cite{EL23, LL25} were able to utilize Hurwitz spaces - a geometric manifestation of parametrizing unramified Galois extensions over global function fields - to compute moments of odd Selmer groups of such quadratic twist families of $E$. One may then utilize stable homology results for such Hurwitz spaces to obtain the desired power saving rate of convergence.
    
    However, the implication from \cite[Proposition 5.4]{KCesnavicius} is no longer valid if $m$ is not coprime to the Tamagawa number of $E$ at $v$. Indeed, when $v$ has reduction type $I_0^*$, (i.e. the additive reduction of $E_f$ at place $v \mid f$ but $v \nmid \Delta_E$), the Tamagawa number of $E_f$ at $v$ is equal to $4$, so there is no guarantee that $\delta_v$ is the unramified cohomology group. Indeed, we observe from \cite{KMR14} that when $m = 2$, the image of $\delta_v$ is a totally ramified subspace of $H^1_{\et}(K_v, E_f[2])$. This prevents us from utilizing Hurwitz spaces in a similar manner presented in \cite{EL23, LL25}. (As a note, this subtlety is the reason why equation (3.20) appearing in \cite[Chapter 3, p.79]{Pa24} is incorrect, which makes the main theorem in the corresponding chapter to be regrettably incorrect; The Tamagawa number of $A_f$ at any place $v \mid f$ and $v \nmid \Delta_E$ is divisible by $l^2$. We sincerely thank Aaron Landesman and Will Sawin for pointing out this error.) Instead, we may use Lagrangian mod-$l$ Markov operators and their rates of convergence to obtain the claimed logarithmic saving secondary term. From this we observe that the secondary term for first moments of $p$-Selmer groups encapsulate differences in images of local Kummer maps between the case when $p = 2$ and the case when $p > 2$, in which the former case can be interpreted as a question in genus theory, whereas the latter case does not.
    
    There is an interesting geometric interpretation we can deduce from Theorem \ref{thm:main} (c.f. \cite[Problem 5, posed by Akshay Venkatesh]{GW21}). Let $\mathrm{Conf}_n(\mathbb{A}^1_{\mathbb{F}_q})$ be the unordered configuration space of $n$ points on $\mathbb{A}^1_{\mathbb{F}_q}$. Then one has $\mathrm{Conf}_n(\mathbb{A}^1_{\mathbb{F}_q})(\mathbb{F}_q) = F_n(\mathbb{F}_q)$. One may ask whether there is a sequence of geometric spaces $\{X_n\}_{n \geq 1}$ admitting a morphism $X_n \to \mathrm{Conf}_n(\mathbb{A}^1_{\mathbb{F}_q})$, such that the ratio $\frac{\#X_n(\mathbb{F}_q)}{\# F_n(\mathbb{F}_q)}$ recovers the first moments of $\mathrm{Sel}_{1-\sigma_{l,f}}(A_f/K)$ as $f$ varies over $F_n(\mathbb{F}_q)$. Suppose such a sequence of spaces exists. By comparing the rates of convergence obtained from Theorem \ref{thm:main} and \cite[Theorem 1.4.8]{LL25}, we may deduce that $X_n$ would not be a Hurwitz space $\mathrm{CHur}_{n}^{c,S}$ defined as in \cite[Section 2]{LL25}. This is because by Grothendieck-Lefschetz trace formula \cite[Theorem 25.1]{milneLEC}, Deligne's proof of the Weil conjectures \cite{Del74}, and Theorem \ref{thm:main}, we may deduce that the sequence of cohomology groups $\{H^i_\et((X_n)_{\overline{\mathbb{F}_q}}, \mathbb{Q}_l)\}_{n \geq 1}$ would not stabilize over a linear range of indices $i$.
\end{remark}

\nocite{*}
\bibliographystyle{alpha}
\bibliography{main}

\end{document}